\newtheorem{theorem}{Theorem}[section]
\newtheorem{lemma}[theorem]{Lemma}
\newtheorem{proposition}[theorem]{Proposition}
\newtheorem{definition}[theorem]{Definition\rm}
\newtheorem{remark}{Remark}
\newcommand{\T}{\ensuremath{\mathbb{T}}}
\newcommand*{\R}{\ensuremath{\mathbb{R}}}
\newcommand*{\N}{\ensuremath{\mathbb{N}}}
\newcommand*{\Z}{\ensuremath{\mathbb{Z}}}
\newcommand*{\C}{\ensuremath{\mathbb{C}}}
\renewcommand*{\div}{\ensuremath{\mathrm{div\,}}}
\newcommand*{\tr}{\ensuremath{\mathrm{tr\,}}}
\newcommand*{\Id}{\ensuremath{\mathrm{Id}}}
\newcommand{\eps}{\varepsilon}
\newcommand*{\curl}{\ensuremath{\mathrm{curl\,}}}
\renewcommand*{\P}{\ensuremath{\mathcal{P}}}
\newcommand*{\RR}{\ensuremath{\mathcal{R}}}
\newcommand{\norm}[1]{\left\|#1\right\|}
\newcommand{\abs}[1]{\left|#1\right|}
\newcommand{\vertiii}[1]{{\left\vert\kern-0.25ex\left\vert\kern-0.25ex\left\vert #1 
    \right\vert\kern-0.25ex\right\vert\kern-0.25ex\right\vert}}
\begin{document}

\begin{abstract}
Recently the second and third author developed an iterative scheme for obtaining 
rough solutions of the 3D incompressible Euler equations in H\"older spaces 
({\it arXiv:1202.1751} and {\it arXiv:1205.3626} (2012)). 
The motivation comes from Onsager's conjecture. The construction involves a superposition
of weakly interacting perturbed Beltrami flows on infinitely many scales. An obstruction 
to better regularity arises from the errors in the linear transport of a fast periodic flow by a slow velocity field.

In a recent paper P.~Isett ({\it arXiv:1211.4065}) has improved upon our methods, introducing
some novel ideas on how to deal with this obstruction, thereby reaching a better H\"older exponent -- albeit below the one conjectured by Onsager.
In this paper we give a shorter proof of Isett's final result, adhering more to the original scheme and
introducing some new devices. More precisely we show that for any positive $\eps$ 
there exist periodic solutions of the 3D incompressible Euler
equations which dissipate the total kinetic energy and belong to the H\"older class $C^{\sfrac{1}{5}-\eps}$.
\end{abstract}

\title[Dissipative Euler Flows]
{Transporting microstructure and dissipative Euler flows}

\author{Tristan Buckmaster}
\address{Institut f\"ur Mathematik, Universit\"at Leipzig, D-04103 Leipzig}
\email{tristan.buckmaster@math.uni-leipzig.de}

\author{Camillo De Lellis}
\address{Institut f\"ur Mathematik, Universit\"at Z\"urich, CH-8057 Z\"urich}
\email{camillo.delellis@math.unizh.ch}

\author{L\'aszl\'o Sz\'ekelyhidi Jr.}
\address{Institut f\"ur Mathematik, Universit\"at Leipzig, D-04103 Leipzig}
\email{laszlo.szekelyhidi@math.uni-leipzig.de}

\maketitle

\section{Introduction}

In what follows $\T^3$ denotes the $3$-dimensional torus, i.e. $\T^3 = {\mathbb S}^1\times {\mathbb S}^1 \times
{\mathbb S}^1$. In this note we give a proof of the following theorem.

\begin{theorem}\label{t:main}
Assume $e: [0, 1]\to \R$ is a positive smooth function and $\eps$ a positive number.
Then there is a continuous vector field $v\in C^{\sfrac{1}{5}-\eps} ( \T^3 \times [0, 1], \R^3)$ and a continuous
scalar field $p\in C^{\sfrac{2}{5}-2\eps} (\T^3\times [0,1])$ which solve the incompressible
Euler equations 
\begin{equation}\label{e:Euler}
\left\{\begin{array}{l}
\partial_t v + \div (v\otimes v) + \nabla p =0\\ \\
\div v = 0
\end{array}\right.
\end{equation}
in the sense of distributions and such that
\begin{equation}\label{e:energy_id}
e(t) = \int |v|^2 (x,t)\, dx\qquad\forall t\in [0, 1]\, .
\end{equation}
\end{theorem}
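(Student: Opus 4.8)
The proof proceeds by convex integration, following the scheme of the second and third authors but carrying the high-frequency building blocks \emph{along the coarse flow}. The plan is to realise $v$ as the $C^0$-limit of a sequence $(v_q,p_q)$ of smooth solutions of the Euler--Reynolds system
\begin{equation*}
\partial_t v_q + \div (v_q\otimes v_q) + \nabla p_q = \div R_q,\qquad \div v_q = 0,
\end{equation*}
with $R_q$ a symmetric trace-free $3\times3$ matrix field. Fix $0<\beta<\sfrac15$ close to $\sfrac15$ and $b>1$ close to $1$, set $\lambda_q = \lceil a^{b^q}\rceil$ for a large base $a=a(\beta,b)$ and $\delta_q=\lambda_q^{-2\beta}$, and maintain the inductive estimates
\begin{equation*}
\norm{v_q}_0\le C,\qquad \norm{v_q}_1\lesssim \delta_q^{\sfrac12}\lambda_q,\qquad \norm{R_q}_0\lesssim \delta_{q+1}\lambda_q^{-\gamma}
\end{equation*}
for a small fixed $\gamma>0$, together with the energy control $\bigl|\,e(t)(1-\delta_{q+1})-\int_{\T^3}\abs{v_q}^2\,dx\,\bigr|\lesssim \delta_{q+2}\,e(t)$. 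Granted such a sequence, the telescoping bound $\norm{v_{q+1}-v_q}_0\lesssim \delta_{q+1}^{\sfrac12}$ and interpolation against the $C^1$ estimate give $v_q\to v$ in $C^\theta$ for every $\theta<\beta$, hence $v\in C^{\sfrac15-\eps}$; since $R_q\to0$ uniformly, $v$ solves \eqref{e:Euler} in the sense of distributions; the pressures $p_q$ converge in $C^{\theta'}$ for every $\theta'<2\beta$, hence $p\in C^{\sfrac25-2\eps}$; and the energy control passes to the limit to yield \eqref{e:energy_id}.

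The heart of the argument is the iteration step $(v_q,p_q,R_q)\rightsquigarrow(v_{q+1},p_{q+1},R_{q+1})$. One mollifies at a suitable scale $\ell$ intermediate between $\lambda_q^{-1}$ and $\lambda_{q+1}^{-1}$, obtaining a coarse field $v_\ell$ with stress $R_\ell$, and subdivides $[0,1]$ into intervals $I_i$ of small length $\tau$ chosen so that on each of them the flow map $\Phi$ of $v_\ell$ — normalised to the identity at the left endpoint of $I_i$ — stays in a fixed small $C^1$-neighbourhood of the identity. On the slab $I_i$ one sets $v_{q+1}=v_\ell+w_{q+1}$ with the \emph{transported Beltrami perturbation}
\begin{equation*}
w_{q+1}(x,t)=\sum_k a_k(x,t)\,\phi_k\bigl(\Phi(x,t)\bigr)\,B_k\,e^{i\lambda_{q+1}\,k\cdot\Phi(x,t)},
\end{equation*}
where $\{B_k\}\subset\C^3$ and the functions $\phi_k$ are the standard ingredients of a stationary Beltrami flow, and the scalar amplitudes $a_k(x,t)$ are selected, via the geometric lemma writing positive-definite matrices near a multiple of $\Id$ as superpositions of such building blocks, so that the low-frequency part of $w_{q+1}\otimes w_{q+1}$ equals $\rho(t)\,\Id-R_\ell$ — a matrix close to $\rho\,\Id$ once $\rho$ is large enough — with $\rho$ fixed so that $\int_{\T^3}\abs{v_{q+1}}^2\,dx$ matches $e(t)(1-\delta_{q+2})$ to the required precision. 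The pieces built on different slabs are patched together with a partition of unity in time, and a lower-order corrector $w_c$ is added so that $\div v_{q+1}=0$ exactly.

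The new stress $R_{q+1}$ is then defined by applying a bilinear antidivergence operator to the residual of the Euler--Reynolds system, which splits into a \emph{transport error} $(\partial_t+v_\ell\cdot\nabla)w_{q+1}+w_{q+1}\cdot\nabla v_\ell$, a \emph{Nash (oscillation) error} from the high-frequency part of $w_{q+1}\otimes w_{q+1}+w_{q+1}\otimes v_\ell+v_\ell\otimes w_{q+1}$, and lower-order \emph{mollification and corrector} errors. The decisive point — and the step I expect to be the main obstacle — is the transport error: precisely because the factors $\phi_k(\Phi)\,e^{i\lambda_{q+1}k\cdot\Phi}$ are advected by $v_\ell$, the material derivative $\partial_t+v_\ell\cdot\nabla$ annihilates them and lands only on the slowly varying amplitude $a_k$, so that this term is of order $\norm{(\partial_t+v_\ell\cdot\nabla)a_k}_0$, which is small, and a stationary-phase estimate for the antidivergence then turns the surviving $\lambda_{q+1}$-oscillation into an additional gain. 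The requirement that transport, oscillation, mollification and corrector errors all be controlled by the target $\delta_{q+2}\lambda_{q+1}^{-\gamma}$ — while $\norm{w_{q+1}}_0\sim\delta_{q+1}^{\sfrac12}$, $\norm{w_{q+1}}_1\sim\delta_{q+1}^{\sfrac12}\lambda_{q+1}$, and the $C^1$ losses created by the short time intervals and by the derivatives of the flow map must be absorbed — reduces, after optimising $\ell$ and $\tau$ and letting $b\downarrow1$, to the single inequality $2\beta<\sfrac25$, i.e. $\beta<\sfrac15$; this is exactly the threshold in the statement, and it is why the present scheme stops short of the Onsager exponent $\sfrac13$. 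Choosing $b$ sufficiently close to $1$ and then $a$ large makes all the relevant inequalities hold with room to spare, which closes the induction.
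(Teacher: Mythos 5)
Your overall architecture coincides with the paper's: an Euler--Reynolds iteration, mollification at an intermediate scale $\ell$, time slabs of length $\tau\sim\mu^{-1}$ glued by cutoffs, Beltrami modes with phases $e^{i\lambda_{q+1}k\cdot\Phi}$ carried by the (inverse) flow of $v_\ell$, amplitudes produced by the geometric lemma, a divergence corrector, and stationary-phase/antidivergence estimates for the new stress. The genuine gap sits exactly at the step you call decisive. You assert that the transport error is of order $\|(\partial_t+v_\ell\cdot\nabla)a_k\|_0$, ``which is small'', but nothing in your inductive hypotheses ($\|v_q\|_0$, $\|v_q\|_1\lesssim\delta_q^{\sfrac12}\lambda_q$, $\|R_q\|_0$, energy control) gives any bound on a time derivative of $R_q$: the Euler--Reynolds system controls $\div R_q$ through $v_q,p_q$, not $\partial_t R_q$. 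Since $a_k$ is an algebraic function of $\rho\,\Id-R_\ell$ (or of its free-transport evolution along $v_\ell$ -- and in either variant the discrepancy must be estimated), its advective derivative is governed by $\|(\partial_t+v_\ell\cdot\nabla)\mathring R_\ell\|_0$, which your hypotheses simply do not control. The missing ingredient is the additional inductive estimate $\|(\partial_t+v_q\cdot\nabla)\mathring R_q\|_0\leq\delta_{q+1}\delta_q^{\sfrac12}\lambda_q$, i.e. \eqref{e:R_Dt_iter}, which is precisely Isett's new input stressed in the paper: it is what makes the amplitudes quasi-advected, so that the surviving transport error is of size $\sim\delta_{q+1}^{\sfrac12}\mu$ (from the time cutoffs and $w\cdot\nabla v_\ell$) before the $\lambda_{q+1}^{-1+\eps}$ stationary-phase gain. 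The paper explicitly notes that flow-map phases alone -- your first ingredient -- do not suffice to reach the exponent $\sfrac15$; without the propagated $D_t\mathring R$ bound your ``which is small'' has no justification and the optimization of $\mu,\ell$ does not produce $\beta<\sfrac15$.

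Moreover, adding \eqref{e:R_Dt_iter} to the induction is not free: you must prove the corresponding bound for $R_{q+1}$, which means estimating the advective derivative of terms of the form $\mathcal R\big(\Omega\, e^{i\lambda_{q+1}k\cdot x}\big)$. In the paper this is done by writing $D_t\mathcal R=[v_\ell,\mathcal R]\nabla+\mathcal R D_t$ and invoking a dedicated commutator estimate (Proposition \ref{p:commutator}), together with a Constantin--E--Titi-type mollification commutator (Proposition \ref{p:CET}) to control $D_t\mathring R_\ell$ and the slab-wise discrepancy between $\mathring R_\ell$ and its transported version (the term $R^5$). None of this machinery is foreseen in your outline, yet the exponent count you assert depends on exactly these bounds; as written, your scheme would close only with the weaker transport estimates and hence a H\"older exponent strictly below $\sfrac15$.
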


Results of this type are associated with the famous conjecture of Onsager. In a nutshell, the question is about whether or not weak solutions in a given regularity class satisfy the law of energy conservation or not. For classical solutions (say, $v\in C^1$) we can multiply \eqref{e:Euler} by $v$ itself, integrate by parts and obtain the energy balance
\begin{equation}\label{e:energycons}
\int_{\T^3}|v(x,t)|^2\,dx=\int_{\T^3}|v(x,0)|^2\,dx\quad\textrm{ for all }t>0.
\end{equation}
On the other hand, for weak solutions (say, merely $v\in L^2$) 
\eqref{e:energycons} might be violated, and this possibility has been considered for a rather long time in the context of $3$ dimensional turbulence. 
In his famous note \cite{Onsager} about statistical hydrodynamics, Onsager considered 
weak solutions satisfying the H\"older condition
\begin{equation}\label{e:hoelderest}
|v(x,t)-v(x',t)|\leq C|x-x'|^\theta,
\end{equation}
where the constant $C$ is independent of $x,x'\in\T^3$ and $t$. He conjectured that 
\begin{enumerate}
\item[(a)] Any weak solution $v$ satisfying \eqref{e:hoelderest} with $\theta>\frac{1}{3}$ conserves the energy;
\item[(b)] For any $\theta<\frac{1}{3}$ there exist weak solutions $v$ satisfying \eqref{e:hoelderest} which do not conserve the energy.
\end{enumerate}
This conjecture is also very closely related to Kolmogorov's famous K41 theory \cite{Kolmogorov} for homogeneous isotropic turbulence in $3$ dimensions. We refer the interested 
reader to \cite{FrischBook,Robert,EyinkSreenivasan}.
Part (a) of the conjecture is by now fully resolved: it has first been considered by Eyink in \cite{Eyink} following Onsager's original calculations and proved by Constantin, E and Titi 
in \cite{ConstantinETiti}. Slightly weaker assumptions on $v$ (in Besov spaces) were subsequently shown to be sufficient for energy conservation 
in \cite{RobertDuchon,CCFS2007}.

In this paper we are concerned with part (b) of the conjecture. Weak solutions violating the energy equality have been constructed for a long time, starting with the seminal work of Scheffer 
and Shnirelman \cite{Scheffer93,Shnirelmandecrease}.
In \cite{DS1,DS2} a new point of view was introduced, relating 
the issue of energy conservation to Gromov's h-principle, see also \cite{DSsurvey}. 
In \cite{DS3} and \cite{DS4} the first constructions of continuous and H\"older-continuous 
weak solutions violating the energy equality appeared. In particular in \cite{DS4} the authors proved 
Theorem \ref{t:main} with H\"older exponent $1/10-\eps$ replacing $1/5-\eps$. 

The threshold exponent $\frac{1}{5}$ has been recently reached by P.~Isett in \cite{Isett} (although strictly speaking he proves
a variant of Theorem \ref{t:main}, since he shows the existence of nontrivial solutions which are compactly supported in time,
rather than prescribing the total kinetic energy).
Our aim in this note is to give a shorter proof of Isett's improvement in the H\"older exponent and isolate the main new ideas
of \cite{Isett} compared to \cite{DS3,DS4}. We observe in passing that the arguments given here can 
be easily modified to produce nontrivial solutions with compact support in time, but losing control on the exact shape
of the energy. The question of producing a solution matching an energy profile $e$ which might vanish is subtler.
A similar issue has been recently treated in the paper \cite{Daneri}.

\subsection{Euler-Reynolds system and the convex integration scheme}
Let us recall the main ideas, on which the constructions in \cite{DS3,DS4} are based. 

The proof is achieved through an iteration scheme. At each step $q\in \N$ we construct a triple $(v_q, p_q, \mathring{R}_q)$ solving the Euler-Reynolds system 
(see \cite[Definition 2.1]{DS3}):
\begin{equation}\label{e:euler_reynolds}
\left\{\begin{array}{l}
\partial_t v_q + \div (v_q\otimes v_q) + \nabla p_q =\div\mathring{R}_q\\ \\
\div v_q = 0\, .
\end{array}\right.
\end{equation} 
The {\it size} of the perturbation 
$$
w_q:=v_{q}-v_{q-1}
$$
will be measured by two parameters: $\delta_q^{\sfrac12}$ is the {\it amplitude} and $\lambda_q$ the {\it frequency}. More precisely, denoting the (spatial) H\"older norms by $\|\cdot\|_k$ (see Section \ref{s:hoelder} for precise definitions),
\begin{align}
\|w_{q}\|_0 &\leq M \delta_{q}^{\sfrac{1}{2}}\,,\label{e:v_C0_iter}\\
\|w_{q}\|_1 &\leq M \delta_{q}^{\sfrac{1}{2}} \lambda_{q}\,,\label{e:v_C1_iter}
\end{align}
and similarly,
\begin{align}
\|p_{q}-p_{q-1}\|_0 &\leq M^2 \delta_{q}\,, \label{e:p_C0_iter}\\
\|p_{q}-p_{q-1}\|_1 &\leq M^2 \delta_{q} \lambda_{q}\,,\label{e:p_C1_iter}
\end{align}
where $M$ is a constant depending only on the function $e=e(t)$ in the Theorem. 
 
In constructing the iteration, the new perturbation, $w_{q+1}$ will be chosen so as to balance the 
previous Reynolds error $\mathring{R}_q$, in the sense that (cf.\ equation \eqref{e:euler_reynolds}) 
we have $\|w_{q+1}\otimes w_{q+1}\|_0\sim \|\mathring{R}_q\|_0$. This is formalized as
\begin{align}
\|\mathring{R}_q\|_0 &\leq \eta \delta_{q+1}\,,\label{e:R_C0_iter}\\
\|\mathring{R}_q\|_1&\leq M \delta_{q+1}\lambda_q\,, \label{e:R_C1_iter}
\end{align}
where $\eta$ will be a small constant, again only depending on $e=e(t)$ in the Theorem. 
Estimates of type \eqref{e:v_C0_iter}-\eqref{e:R_C1_iter} appear already in the paper
\cite{DS4}: although the bound claimed for $\|\mathring{R}_1\|_1$ in the main proposition
of \cite{DS4} is the weaker one $\|\mathring{R}_q\|_1\leq M \delta_q^{\sfrac{1}{2}}\lambda_q$
(cf.\  \cite[Proposition 2.2]{DS4}): $\lambda_q$ here
corresponds to $(D\delta/\bar{\delta}^2)^{1+\eps}$ there), this was just done for the
ease of notation and the actual bound achieved in the proof does in fact correspond
to \eqref{e:R_C1_iter} (cf.\ Step 4 in Section 9). In the language of \cite{Isett} 
the estimates \eqref{e:v_C0_iter}-\eqref{e:R_C1_iter} correspond to the frequency energy
levels of order 0 and 1 (cf.\  Definition 9.1 therein). 

Along the iteration we will have 
$$
\delta_q\to 0\quad\textrm{ and }\quad \lambda_q\to \infty
$$
at a rate that is {\it at least} exponential. On the one hand \eqref{e:v_C0_iter}, \eqref{e:p_C0_iter} and \eqref{e:R_C0_iter} will imply the convergence of the sequence $v_q$ to a continuous weak solution of the Euler equations. On the other hand the precise dependence of $\lambda_q$ on $\delta_q$ will determine the critical H\"older regularity. Finally, the equation \eqref{e:energy_id} will be ensured by 
\begin{equation}\label{e:energy_iter}
\left| e(t) (1-\delta_{q+1}) - \int |v_q|^2 (x,t)\, dx \right| \leq \frac{1}{4} \delta_{q+1} e(t)\,.
\end{equation}
Note that, being an expression quadratic in $v_q$, this estimate is consistent with \eqref{e:R_C0_iter}.

As for the perturbation, it will consist essentially of a finite sum of modulated Beltrami modes (see Section \ref{s:prelim} below), so that 
$$
w_q(x,t)=\sum_{k}a_k(x,t)\,\phi_k(x,t)\,B_ke^{i\lambda_{q}k\cdot x}\,,
$$
where $a_k$ is the amplitude, $\phi_k$ is a phase function (i.e. $|\phi_k|=1$) and $B_ke^{i\lambda_{q}k\cdot\ x}$ is a complex Beltrami mode at frequency $\lambda_{q}$. Having a perturbation of this form ensures that the ``oscillation part of the error''
$$
\div(w_{q}\otimes w_{q}+\mathring{R}_{q-1})
$$
in the equation \eqref{e:euler_reynolds} vanishes, see \cite{DS3} (Isett in \cite{Isett} calls this term ``high-high interaction''). The main analytical part of the argument goes in to choosing $a_k$ and $\phi_k$ correctly in order to deal with the so-called transport part of the error 
$$
\partial_t w_{q}+v_{q-1}\cdot \nabla w_{q}\,.
$$
In \cite{DS3,DS4} a second large parameter $\mu(=\mu_{q})$ was introduced to deal with this term. In some sense the role of $\mu$ is to interpolate between errors of order $1$ in the transport term and errors of order $\lambda_{q}^{-1}$ in the oscillation term. 

The technique used in \cite{DS4} for the transport term leads to the H\"older exponent $\frac{1}{10}$. 
In our opinion the key new idea introduced by Isett is to recognize that the transport error can be reduced 
by defining $a_k$ and $\phi_k$ in such a way that adheres more closely to the transport structure of the equation. 
This requires two new ingredients.  First, the phase functions $\phi_k$ are defined using the flow map of 
the vector field $v_q$, whereas in \cite{DS4} they were functions of $v_q$ itself. With the latter choice, 
although some improvement of the exponent $1/10$ is possible, the threshold $1/5$ seems beyond reach. 
Secondly, Isett introduces a new set of estimates to complement 
\eqref{e:v_C0_iter}-\eqref{e:energy_iter} with the purpose of controlling the free transport evolution of the Reynolds error:
\begin{equation}
\|\partial_t \mathring{R}_q + v_q \cdot \nabla \mathring{R}_q\|_0 \leq \delta_{q+1} \delta_q^{\sfrac{1}{2}} \lambda_q\, .\label{e:R_Dt_iter}
\end{equation}
These two ingredients play a key role also in the proof of Theorem \ref{t:main} given here; 
however, compared to \cite{Isett}, we improve upon the simplicity of their implementation.
In order to compare our proof to Isett's proof, it is worth to notice that the parameter $\mu$ corresponds to
the inverse of the life-span parameter $\tau$ used in \cite{Isett}.

\subsection{Improvements}
Although the construction of Isett in \cite{Isett} is essentially based on this same scheme outlined here, there are a number of further points of departure. For instance, Isett considers perturbations with a nonlinear phase rather than the simple 
stationary flows used here, and consequently, he uses a ``microlocal" version of the Beltrami flows. This also leads to 
the necessity of appealing to nonlinear stationary phase lemmas. Our purpose here is to show that, 
although the other ideas exploited in \cite{Isett} are of independent interest and might also, in principle,
lead to better bounds in the future, with the additional control in \eqref{e:R_Dt_iter}, 
a scheme much more similar to the one introduced in \cite{DS3} provides a substantially 
shorter proof of Theorem \ref{t:main}. To this end, however, we introduce some new 
devices which greatly simplifies the relevant estimates:
\begin{itemize}
\item[(a)] We regularize the maps $v_q$ and $\mathring{R}_q$  in space only and then
solve locally in time the free-transport equation in order to approximate $\mathring{R}_q$. 
\item[(b)] Our maps $a_k$ are then elementary algebraic functions of the approximation of $\mathring R_q$.
\item[(c)] The estimates for the Reynolds stress are still carried on based on simple stationary ``linear'' phase arguments.
\item[(d)] The proof of \eqref{e:R_Dt_iter} is simplified by one commutator estimate which, 
in spite of having a classical flavor, deals efficiently with one important error term.
\end{itemize}

\subsection{The main iteration proposition and the proof of Theorem \ref{t:main}}

Having outlined the general idea above, we proceed with the iteration, starting with
 the trivial solution $(v_0,p_0,\mathring{R}_0)=(0,0,0)$. We will construct new triples $(v_q,p_q,\mathring{R}_q)$ inductively, assuming the estimates \eqref{e:v_C0_iter}-\eqref{e:R_Dt_iter}.

\begin{proposition}\label{p:iterate}
There are positive constants $M$ and $\eta$ depending only on $e$ such that the following holds. 
For every $c>\frac{5}{2}$ and $b>1$, if $a$ is sufficiently large, then there is a sequence of triples 
$(v_q, p_q, \mathring{R}_q)$ starting with $(v_0, p_0, \mathring{R}_0)= (0,0,0)$, 
solving \eqref{e:euler_reynolds} and satisfying the estimates \eqref{e:v_C0_iter}-\eqref{e:R_Dt_iter}, where 
$\delta_q:= a^{-b^q}$, $\lambda_q \in [a^{cb^{q+1}}, 2 a^{cb^{q+1}}]$ for $q=0,1,2,\dots$. In addition we claim the estimates
\begin{align}
\|\partial_t (v_q-v_{q-1})\|_0 \leq C \delta_q^{\sfrac{1}{2}}\lambda_q \qquad\mbox{and}\qquad
\|\partial_t (p_q-p_{q-1})\|_0\leq C \delta_q \lambda_q\label{e:t_derivatives}
\end{align}
\end{proposition}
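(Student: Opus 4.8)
\medskip

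The plan is to reduce everything to a single inductive step. I would formulate a Main Lemma: there exist $M,\eta>0$ depending only on $e$ such that for all $b>1$ and $c>\tfrac{5}{2}$ there is $\bar a=\bar a(b,c,e)$ with the property that, if $a\ge\bar a$ and $(v_q,p_q,\mathring R_q)$ solves \eqref{e:euler_reynolds} with the estimates \eqref{e:v_C0_iter}--\eqref{e:R_Dt_iter} and \eqref{e:t_derivatives} at level $q$ for the stated $\delta_q,\lambda_q$, then one can produce $(v_{q+1},p_{q+1},\mathring R_{q+1})$ solving \eqref{e:euler_reynolds} with the same list at level $q+1$. Granting this, the Proposition follows by induction starting from $(0,0,0)$: at the base the only nontrivial conditions concern $\mathring R_0=0$ (vacuous), together with a minor direct check of \eqref{e:energy_iter} at the first step, and the essential point is that the geometric ansatz $\delta_q=a^{-b^q}$, $\lambda_q\sim a^{cb^{q+1}}$ is exactly what forces all error budgets to close uniformly in $q$ once $c>\tfrac{5}{2}$. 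The rest of the discussion concerns the Main Lemma.

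\emph{Construction.} Following devices (a)--(c) I would: (i) mollify $v_q,\mathring R_q$ in the space variable at a scale $\ell\ll\lambda_{q+1}^{-1}$, small enough that the mollification errors $v_q\otimes v_q-v_\ell\otimes v_\ell$ and $\mathring R_q-\mathring R_\ell$ sit below the final budget, obtaining $(v_\ell,\mathring R_\ell)$ solving a mollified Euler--Reynolds system; (ii) introduce the intermediate parameter $\mu=\mu_q$ (the inverse life-span), a partition of unity $\sum_i\chi_i^2\equiv1$ in time with $\chi_i$ supported near $t_i=i/\mu$, and for each $i$ solve the free transport equations $(\partial_t+v_\ell\cdot\nabla)R_i=0$ and $(\partial_t+v_\ell\cdot\nabla)\Phi_i=0$ with data $R_i(t_i)=\mathring R_\ell(t_i)$, $\Phi_i(t_i)=\mathrm{id}$; (iii) set $w_{q+1}=\sum_{i,k}\chi_i\,a_{i,k}\,B_k\,e^{i\lambda_{q+1}k\cdot\Phi_i}+w_c$, where the $a_{i,k}$ are the elementary algebraic functions of $\rho\,\mathrm{Id}-R_i$ furnished by the first geometric lemma of \cite{DS3} (so that $\sum_k a_{i,k}^2B_k\otimes B_k=\rho\,\mathrm{Id}-R_i$ with $\rho\sim\delta_{q+1}$, the zeroth mode also tuned so as to enforce \eqref{e:energy_iter}), and $w_c$ is the usual corrector restoring $\div w_{q+1}=0$; (iv) put $v_{q+1}=v_\ell+w_{q+1}$ and let $p_{q+1},\mathring R_{q+1}$ arise by inverting the divergence on the error. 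Here \eqref{e:R_C0_iter} with $\eta$ small is what keeps $\rho\,\mathrm{Id}-R_i$ in the domain of the geometric lemma.

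\emph{Estimates.} Plugging $v_{q+1}$ into \eqref{e:euler_reynolds}, one writes $\mathring R_{q+1}=\RR(\cdot)$ with the argument split, in the standard way, into a transport error $(\partial_t+v_\ell\cdot\nabla)w_{q+1}$, a Nash error $(w_{q+1}\cdot\nabla)v_\ell$, an oscillation error built from $w_{q+1}\otimes w_{q+1}+\mathring R_\ell$ (whose low-frequency part vanishes by the Beltrami identity, so that $\RR$ gains a factor $\lambda_{q+1}^{-1}$), corrector terms, and mollification terms, $\RR$ being the order $-1$ right inverse of $\div$ of \cite{DS3}. The bounds \eqref{e:v_C0_iter}, \eqref{e:v_C1_iter} on $w_{q+1}$ and \eqref{e:p_C0_iter}, \eqref{e:p_C1_iter} come directly from $\|a_{i,k}\|_0\sim\delta_{q+1}^{1/2}$, the frequency $\lambda_{q+1}$ of the modes, and elementary Schauder and product estimates for the flow maps (using that $D\Phi_i$ stays $O(1)$-close to the identity on $\supp\chi_i$, which forces $\mu\gtrsim\delta_q^{1/2}\lambda_q$); \eqref{e:t_derivatives} follows from $\partial_t=(\partial_t+v_\ell\cdot\nabla)-v_\ell\cdot\nabla$ together with $\|v_\ell\|_0\lesssim1$. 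The substantive work is to check, after optimizing $\mu$ as a geometric mean of the competing scales, that each error term obeys $\|\cdot\|_0\le\eta\delta_{q+2}$, $\|\cdot\|_1\le M\delta_{q+2}\lambda_{q+1}$ and $\|(\partial_t+v_{q+1}\cdot\nabla)\,\cdot\|_0\le\delta_{q+2}\delta_{q+1}^{1/2}\lambda_{q+1}$; this reduces to a finite list of inequalities among powers of the $\delta$'s and $\lambda$'s which, for the geometric ansatz, collapse precisely to $c>\tfrac{5}{2}$.

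\emph{Main obstacle.} The delicate point, and where devices (c) and (d) pay off, is the new material-derivative estimate \eqref{e:R_Dt_iter} for $\mathring R_{q+1}$. The mechanism is that $(\partial_t+v_\ell\cdot\nabla)$ annihilates $e^{i\lambda_{q+1}k\cdot\Phi_i}$ and $R_i$ exactly, so that when it acts on the oscillation and transport contributions the derivative lands only on the cutoffs $\chi_i$, producing a harmless factor $\mu$; the transport error $(\partial_t+v_\ell\cdot\nabla)w_{q+1}$ is then of size $\sim\mu\,\delta_{q+1}^{1/2}$ up to milder terms, which after applying $\RR$ and replacing $v_\ell$ by $v_{q+1}$ stays within budget exactly when $c>\tfrac{5}{2}$. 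The remaining subtlety is to replace $(\partial_t+v_q\cdot\nabla)\mathring R_\ell$ by $\bigl((\partial_t+v_q\cdot\nabla)\mathring R_q\bigr)_\ell$: the difference is the commutator $[v_q\cdot\nabla,(\cdot)_\ell]\mathring R_q$ between transport and mollification, and controlling it — by a classical commutator estimate using $\|Dv_q\|_0$ and \eqref{e:R_C1_iter} — is device (d). Combined with the bookkeeping that the geometric rates close up under $c>\tfrac{5}{2}$, this is the part I expect to require the most care.
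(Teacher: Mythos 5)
Your outline is, in essence, the paper's own scheme (space mollification at scale $\ell$, a time partition $\sum_l\chi_l^2\equiv1$ at scale $\mu^{-1}$, free transport of $\mathring R_\ell$ and of the phases via the inverse flows $\Phi_l$, amplitudes from the geometric lemma, a corrector, the operator $\RR$ for the new stress, optimization of $\mu$ and $\ell$, and a final power-counting inequality that closes exactly for $c>\tfrac52$, $b>1$); taking $v_{q+1}=v_\ell+w$ instead of the paper's $v_{q+1}=v_q+w$ is a harmless variant (the paper keeps $v_q$ and absorbs $v-v_\ell$ into the terms $R^3,R^4$). However, two load-bearing ingredients of the paper's argument are missing. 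First, your assertion that the low-frequency part of the oscillation error ``vanishes by the Beltrami identity'' is not true for the construction as you describe it, because you use a single family of wave vectors for every time interval: for adjacent cutoffs with $\chi_l\chi_{l+1}\neq0$ the cross terms with $k'=-k$ produce contributions of the form $\chi_l\chi_{l+1}\,a_{k l}a_{-k,l+1}\,(B_k\otimes B_{-k}+B_{-k}\otimes B_k)\,e^{i\lambda_{q+1}k\cdot(\Phi_l-\Phi_{l+1})}$, whose phase gradient is only of size $\delta_q^{\sfrac12}\lambda_q/\mu\ll1$ and may even degenerate; $\RR\div$ then gains no factor $\lambda_{q+1}^{-1}$, leaving an error of order $\delta_{q+1}$, far above the budget $\eta\delta_{q+2}$. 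The paper eliminates these terms identically by invoking Lemma \ref{l:split} with $N=2$ and assigning the disjoint families $\Lambda^e$, $\Lambda^o$ to even and odd $l$, which is exactly what makes Lemma \ref{l:doublesum} true; some such device has to be added to your construction.

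Second, for the transport estimate \eqref{e:R_Dt_iter} at level $q+1$ you argue that $D_t$ annihilates the phases and amplitudes and lands only on the cutoffs, but the dominant pieces of the new stress ($R^0$ and $R^1$ in the paper) have the form $\RR\bigl(\sum_{kl}\Omega_{kl}e^{i\lambda_{q+1}k\cdot x}\bigr)$, and $D_t$ does not commute with the nonlocal operator $\RR$: one must estimate $[v_\ell\cdot\nabla,\RR]$ applied to functions oscillating at frequency $\lambda_{q+1}$. This is precisely the paper's device (d), Proposition \ref{p:commutator}, used through the identity $D_t\RR=[v_\ell,\RR]\nabla+\RR D_t$ in the bounds for $D_tR^0$ and $D_tR^1$. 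The commutator you do invoke --- between mollification and transport, of Constantin--E--Titi type --- is indeed needed, but only for the milder terms $D_tv_\ell$, $D_tR^4$ and $D_tR^5$. Without an estimate for $[v_\ell,\RR]$ on oscillatory inputs, your claimed control of $\|\partial_t\mathring R_{q+1}+v_{q+1}\cdot\nabla\mathring R_{q+1}\|_0$ is unjustified, and this is the point where the paper's proof does genuinely new work beyond the standard stationary-phase bounds.
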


\begin{proof}[Proof of Theorem \ref{t:main}]
Choose any $c>\frac{5}{2}$ and $b>1$ and let $(v_q, p_q,\mathring{R}_q)$ be a sequence as in Proposition \ref{p:iterate}.
It follows then easily that $\{(v_q, p_q)\}$ converge uniformly to a pair of continuous functions $(v,p)$ such that
\eqref{e:Euler} and \eqref{e:energy_id} hold. 
We introduce the notation $\|\cdot\|_{C^\vartheta}$ for H\"older norms in space and time.
From \eqref{e:v_C0_iter}-\eqref{e:p_C1_iter}, \eqref{e:t_derivatives} and interpolation we conclude
\begin{align}
\|v_{q+1}-v_q\|_{C^\vartheta} & \leq M \delta_{q+1}^{\sfrac{1}{2}} \lambda_{q+1}^\vartheta 
\leq C a^{b^{q+1} (2cb \vartheta -1)/2}\\
\|p_{q+1}-p_q\|_{C^{2\vartheta}} & \leq M^2 \delta_{q+1} \lambda_{q+1}^{2\vartheta} \leq 
C a^{b^{q+1} (2cb \vartheta -1)}\, .
\end{align}
Thus, for every $\vartheta< \frac{1}{2bc}$, $v_q$ converges in $C^{\vartheta}$ and $p_q$ in $C^{2\vartheta}$.
\end{proof}

\subsection{Plan of the paper} In the rest of the paper we will use $D$ and $\nabla$ for 
differentiation in the space variables and $\partial_t$ for differentiation in the time variable. 
After recalling in Section \ref{s:prelim} some preliminary notation from the paper \cite{DS3}, in Section \ref{s:perturbations} we give the precise
definition of the maps $(v_{q+1}, p_{q+1}, \mathring{R}_{q+1})$ assuming the triple $(v_q, p_q, \mathring{R}_q)$ to be known. The Sections \ref{s:perturbation_estimates}, \ref{s:estimates_energy} and \ref{s:reynolds} will focus on estimating, respectively, $w_{q+1} =v_{q+1}-v_q$, $\int |v_{q+1}|^2 (x,t)\, dx$ and $\mathring{R}_{q+1}$.
These estimates are then collected in Section \ref{s:conclusion} where Proposition \ref{p:iterate} will be finally proved. The appendix collects several technical (and, for the most part, well-known) estimates on the different classical PDEs involved in our construction, i.e. the transport equation, the Poisson equation and the biLaplace equation.

\subsection{Acknowledgements}
We wish to thank Phil Isett for several very interesting discussions and suggestions for improvements on this manuscript. 

T.B. and L.Sz. acknowledge the support of the ERC Grant Agreement No. 277993, C.dL. acknowledges the support of the 
SNF Grant 129812.

\section{Preliminaries}\label{s:prelim}

\subsection{Geometric preliminaries}
In this paper we denote by $\R^{n\times n}$, as usual, the space of $n\times n$ matrices, whereas $\mathcal{S}^{n\times n}$
and $\mathcal{S}^{n\times n}_0$ denote, respectively, the corresponding subspaces of symmetric matrices 
and of trace-free symmetric matrices. The $3\times 3$ identity matrix will be denoted with $\Id$. 
For definitiveness we will use the matrix operator norm $|R|:=\max_{|v|=1}|Rv|$. Since we will
deal with symmetric matrices, we have the identity $|R|= \max_{|v|=1} |Rv \cdot v|$.

\begin{proposition}[Beltrami flows]\label{p:Beltrami}
Let $\bar\lambda\geq 1$ and let $A_k\in\R^3$ be such that 
$$
A_k\cdot k=0,\,|A_k|=\tfrac{1}{\sqrt{2}},\,A_{-k}=A_k
$$
for $k\in\Z^3$ with $|k|=\bar\lambda$.
Furthermore, let 
$$
B_k=A_k+i\frac{k}{|k|}\times A_k\in\C^3.
$$
For any choice of $a_k\in\C$ with $\overline{a_k} = a_{-k}$ the vector field
\begin{equation}\label{e:Beltrami}
W(\xi)=\sum_{|k|=\bar\lambda}a_kB_ke^{ik\cdot \xi}
\end{equation}
is real-valued, divergence-free and satisfies
\begin{equation}\label{e:Bequation}
\div (W\otimes W)=\nabla\frac{|W|^2}{2}.
\end{equation}
Furthermore
\begin{equation}\label{e:av_of_Bel}
\langle W\otimes W\rangle= \fint_{\T^3} W\otimes W\,d\xi = \frac{1}{2} \sum_{|k|=\bar\lambda} |a_k|^2 \left( \Id - \frac{k}{|k|}\otimes\frac{k}{|k|}\right)\, .  
\end{equation}
\end{proposition}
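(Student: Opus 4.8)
The plan is to verify the four assertions in turn; each reduces to elementary linear algebra and vector calculus once one isolates the single structural fact that every summand $B_ke^{ik\cdot\xi}$ is an eigenfunction of $\curl$ with eigenvalue $\bar\lambda$.

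\emph{Reality and divergence.} For the reality of $W$ I would conjugate \eqref{e:Beltrami}, relabel $k\mapsto -k$ in the sum, and use $\overline{a_k}=a_{-k}$, $A_{-k}=A_k$ together with the identity $B_{-k}=\overline{B_k}$, which is immediate from the definition of $B_k$ since $\tfrac{-k}{|-k|}\times A_{-k}=-\tfrac{k}{|k|}\times A_k$; this returns the original sum. For $\div W=0$ it suffices to observe that $\div\bigl(B_ke^{ik\cdot\xi}\bigr)=i(B_k\cdot k)e^{ik\cdot\xi}$ and that $B_k\cdot k=A_k\cdot k+i\bigl(\tfrac{k}{|k|}\times A_k\bigr)\cdot k=0$, both terms vanishing because $A_k\cdot k=0$ and the cross product is orthogonal to $k$.

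\emph{The Beltrami identity.} The key computation is $\curl\bigl(B_ke^{ik\cdot\xi}\bigr)=i(k\times B_k)e^{ik\cdot\xi}=\bar\lambda\,B_ke^{ik\cdot\xi}$: expanding $i(k\times B_k)=i(k\times A_k)-k\times\bigl(\tfrac{k}{|k|}\times A_k\bigr)$ and invoking the double cross-product identity $k\times\bigl(\tfrac{k}{|k|}\times A_k\bigr)=\tfrac{1}{|k|}\bigl(k(k\cdot A_k)-A_k|k|^2\bigr)=-\bar\lambda A_k$, one gets $i(k\times B_k)=\bar\lambda A_k+i(k\times A_k)=\bar\lambda B_k$. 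Summing over $k$ gives $\curl W=\bar\lambda W$. Then \eqref{e:Bequation} follows from the standard identities $\div(W\otimes W)=(W\cdot\nabla)W+(\div W)W$ and $(W\cdot\nabla)W=\nabla\tfrac{|W|^2}{2}-W\times\curl W$: the first extra term drops by $\div W=0$, and $W\times\curl W=\bar\lambda\,W\times W=0$.

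\emph{The average and the main obstacle.} Expanding $W\otimes W=\sum_{k,k'}a_ka_{k'}B_k\otimes B_{k'}e^{i(k+k')\cdot\xi}$ and integrating over $\T^3$, only the diagonal terms $k'=-k$ survive (all others have vanishing mean even though $|k|=|k'|=\bar\lambda$), so that $\langle W\otimes W\rangle=\sum_{|k|=\bar\lambda}a_ka_{-k}\,B_k\otimes B_{-k}=\sum_{|k|=\bar\lambda}|a_k|^2\,B_k\otimes\overline{B_k}$, using $a_ka_{-k}=a_k\overline{a_k}=|a_k|^2$ and $B_{-k}=\overline{B_k}$. Writing $B_k\otimes\overline{B_k}=A_k\otimes A_k+\bigl(\tfrac{k}{|k|}\times A_k\bigr)\otimes\bigl(\tfrac{k}{|k|}\times A_k\bigr)+i\bigl[\bigl(\tfrac{k}{|k|}\times A_k\bigr)\otimes A_k-A_k\otimes\bigl(\tfrac{k}{|k|}\times A_k\bigr)\bigr]$ and pairing the index $k$ with $-k$ (the index set and the weights $|a_k|^2$ being invariant, while the bracketed antisymmetric term changes sign), the imaginary part cancels in the sum. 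Finally, since $A_k\perp k$ and $|A_k|=\tfrac{1}{\sqrt2}$, the triple $\bigl\{\sqrt2\,A_k,\ \sqrt2\,\tfrac{k}{|k|}\times A_k,\ \tfrac{k}{|k|}\bigr\}$ is an orthonormal basis of $\R^3$, so $\Id=2A_k\otimes A_k+2\bigl(\tfrac{k}{|k|}\times A_k\bigr)\otimes\bigl(\tfrac{k}{|k|}\times A_k\bigr)+\tfrac{k}{|k|}\otimes\tfrac{k}{|k|}$, which rearranges to $A_k\otimes A_k+\bigl(\tfrac{k}{|k|}\times A_k\bigr)\otimes\bigl(\tfrac{k}{|k|}\times A_k\bigr)=\tfrac12\bigl(\Id-\tfrac{k}{|k|}\otimes\tfrac{k}{|k|}\bigr)$ and hence gives \eqref{e:av_of_Bel}. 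There is no genuine analytic difficulty here; the only thing to watch is the consistent use of the reality conventions under the relabeling $k\mapsto -k$, in particular to see that off-diagonal terms in $W\otimes W$ average to zero and that the antisymmetric part of $B_k\otimes\overline{B_k}$ drops out of $\langle W\otimes W\rangle$.
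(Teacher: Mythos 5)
Your proof is correct. The reality, divergence-free, and averaging arguments coincide with the standard ones (the paper, following \cite{DS3}, computes $\langle W\otimes W\rangle$ in exactly this way: only the modes with $k'=-k$ survive, and $B_k\otimes\overline{B_k}+\overline{B_k}\otimes B_k = 2\bigl(A_k\otimes A_k + \tfrac{k}{|k|}\times A_k\otimes \tfrac{k}{|k|}\times A_k\bigr)=\Id-\tfrac{k}{|k|}\otimes\tfrac{k}{|k|}$). Where you genuinely diverge is in the proof of \eqref{e:Bequation}: the paper's route (via \cite{DS3}) expands $\div(W\otimes W)=(W\cdot\nabla)W$ in Fourier modes, symmetrizes in $(k,k')$, and invokes the algebraic identity of Lemma \ref{l:BkBk'}, namely $(B_k\otimes B_{k'}+B_{k'}\otimes B_k)(k+k')=(B_k\cdot B_{k'})(k+k')$, to recognize the result as $\nabla\tfrac{|W|^2}{2}$. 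You instead observe that each mode is a curl eigenfunction, so $\curl W=\bar\lambda W$, and then use the (real-field) identity $(W\cdot\nabla)W=\nabla\tfrac{|W|^2}{2}-W\times\curl W$; since you establish reality of $W$ first, this is legitimate and gives a shorter, more conceptual proof of \eqref{e:Bequation}. The trade-off is that the paper's mode-by-mode algebraic identity is not just a device for this proposition: Lemma \ref{l:BkBk'} is reused later (in the estimate of $R^1$, to kill the term $II$) where the amplitudes $a_{kl}$ depend on $x$ and the global eigenfunction argument is no longer available, so the Fourier-symmetrization proof is the one that generalizes to the actual construction, while yours buys brevity here.
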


The proof of \eqref{e:Bequation}, which is quite elementary and can be found in \cite{DS3}, is based on the following algebraic identity, which we state separately for future reference:
\begin{lemma}\label{l:BkBk'}
Let $k,k'\in \Z^3$ with $|k|=|k'|=\bar{\lambda}$ and let $B_k,B_{k'}\in \C^3$ be the associated vectors from Proposition \ref{p:Beltrami}. Then we have
$$
(B_k\otimes B_{k'}+B_{k'}\otimes B_k)(k+k')=(B_k\cdot B_{k'})(k+k').
$$
\end{lemma}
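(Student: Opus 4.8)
The plan is to verify the identity in Lemma \ref{l:BkBk'} by a direct computation, exploiting the very special algebraic structure of the Beltrami vectors $B_k = A_k + i\frac{k}{|k|}\times A_k$. The key observation is that both sides of the claimed identity are applied to the vector $k+k'$, so it suffices to show that the symmetric tensor $B_k\otimes B_{k'} + B_{k'}\otimes B_k - (B_k\cdot B_{k'})\,\Id$ annihilates $k+k'$. Equivalently, I would show
\begin{equation*}
(B_{k'}\cdot(k+k'))\,B_k + (B_k\cdot(k+k'))\,B_{k'} = (B_k\cdot B_{k'})(k+k')\,.
\end{equation*}

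First I would compute the scalar products $B_k\cdot(k+k')$ and $B_{k'}\cdot(k+k')$. Since $A_k\cdot k = 0$ and $(\frac{k}{|k|}\times A_k)\cdot k = 0$, we get $B_k\cdot k = 0$, so $B_k\cdot(k+k') = B_k\cdot k'$; similarly $B_{k'}\cdot(k+k') = B_{k'}\cdot k = B_k\cdot k'$ by symmetry of the bilinear form, wait — more precisely $B_{k'}\cdot k$. Here the crucial point is that $|k| = |k'| = \bar\lambda$, so that $k\cdot k' $ appears symmetrically. Writing everything out using the identity $(\frac{k}{|k|}\times A_k)\cdot v = \frac{1}{|k|}\det(k, A_k, v)$ and the antisymmetry of the determinant, one finds that $B_k\cdot k'$ and $B_{k'}\cdot k$ differ only in the sign of the imaginary (cross-product) part, which is exactly what is needed. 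I would then expand $B_k\cdot B_{k'}$ in the same way, separating real and imaginary parts, using $|A_k| = |A_{k'}| = \tfrac{1}{\sqrt2}$ where convenient but mainly relying on the orthogonality relations.

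With these scalar quantities in hand, the identity reduces to a linear-algebra identity among the three vectors $B_k$, $B_{k'}$, and $k+k'$. The natural way to close the argument is to observe that $B_k$ and $B_{k'}$ each lie in the plane orthogonal to $k$ and $k'$ respectively, and to use the Grassmann/Lagrange identity for the double cross product, $a\times(b\times c) = (a\cdot c)b - (a\cdot b)c$, applied to expressions like $\frac{k}{|k|}\times(\frac{k'}{|k'|}\times A_{k'})$. Since the whole computation is bilinear in $A_k$ and $A_{k'}$ with coefficients that are polynomials in $k\cdot k'$ and $|k|^2 = |k'|^2 = \bar\lambda^2$, expanding both sides and matching the real part and the imaginary part separately turns the claim into a finite set of vector identities that follow from $A_k\cdot k = A_{k'}\cdot k' = 0$ together with the Jacobi/Lagrange identities.

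The main obstacle is purely organizational: keeping track of the real and imaginary parts through the expansion of $B_k\otimes B_{k'} + B_{k'}\otimes B_k$, since $B_k$ has four "pieces" when paired ($A_k\otimes A_{k'}$, $A_k\otimes(\tfrac{k'}{|k'|}\times A_{k'})$, etc.). The cleanest route, which I expect to use, is to note that $\bar\lambda^{-1}k$ is a unit vector, set $n = \bar\lambda^{-1}k$, $n' = \bar\lambda^{-1}k'$, so that $B_k = A_k + i\, n\times A_k$ with $n\cdot A_k = 0$ and $|A_k|^2 = \tfrac12$; then $\{A_k, n\times A_k, n\}$ is an orthogonal frame (up to scaling), and likewise for $k'$. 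Decomposing $A_{k'}$ and $n'\times A_{k'}$ in the first frame and plugging in reduces everything to scalar identities in $n\cdot n'$. This is entirely routine once organized, so I would present it compactly, referring to \cite{DS3} for the analogous verification of \eqref{e:Bequation} and \eqref{e:av_of_Bel}.
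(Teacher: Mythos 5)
Your reduction to the vector identity $(B_{k'}\cdot k)B_k+(B_k\cdot k')B_{k'}=(B_k\cdot B_{k'})(k+k')$ (using $B_k\cdot k=B_{k'}\cdot k'=0$) is exactly right, and the double cross product identity is indeed the correct tool. However, the step you single out as ``the crucial point'' is false: $B_k\cdot k'$ and $B_{k'}\cdot k$ do \emph{not} in general agree up to the sign of their imaginary parts. The antisymmetry-of-the-determinant argument would require the same vector $A$ in both expressions, whereas $A_k$ and $A_{k'}$ are independent choices. Concretely, take $k=e_3$, $A_k=\tfrac{1}{\sqrt2}e_1$, $k'=e_1$, $A_{k'}=\tfrac{1}{\sqrt2}e_2$: then $B_k\cdot k'=\tfrac{1}{\sqrt2}$ is real while $B_{k'}\cdot k=\tfrac{i}{\sqrt2}$ is purely imaginary, yet the lemma holds -- so no such scalar symmetry exists, nor is one needed. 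Relatedly, decomposing $A_{k'}$ in the frame adapted to $k$ does not reduce the problem to scalar identities in $n\cdot n'$ alone: the components of $A_{k'}$ in that frame are additional free parameters, so the brute-force expansion, though finite, is heavier than you suggest, and as written your outline never identifies where the hypothesis $|k|=|k'|$ actually enters.

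The missing idea -- and the only place $|k|=|k'|=\bar\lambda$ is used -- is the eigenvector relation $\tfrac{k}{|k|}\times B_k=-iB_k$, i.e. $k\times B_k=-i\bar\lambda B_k$ (a one-line consequence of $B_k=A_k+i\tfrac{k}{|k|}\times A_k$ and $a\times(b\times c)=(a\cdot c)b-(a\cdot b)c$). With it the argument closes immediately: by the double cross product identity,
\begin{align*}
(B_{k'}\cdot k)B_k+(B_k\cdot k')B_{k'}
&=(B_k\cdot B_{k'})(k+k')-B_{k'}\times(k\times B_k)-B_k\times(k'\times B_{k'})\\
&=(B_k\cdot B_{k'})(k+k')+i\bar\lambda\bigl(B_{k'}\times B_k+B_k\times B_{k'}\bigr),
\end{align*}
and the bracket vanishes by antisymmetry of the cross product; note that the cancellation uses that both eigenvalues equal $\bar\lambda$, i.e. precisely $|k|=|k'|$. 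This is the paper's proof. If you replace the false symmetry claim by this relation, your outline becomes that proof; otherwise you are committed to carrying out the full frame expansion, which your proposal leaves undone.
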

\begin{proof}
The proof is a straight-forward calculation. Indeed, since $B_k\cdot k=B_{k'}\cdot k'=0$, we have
\begin{align*}
(B_k\otimes B_{k'}+&B_{k'}\otimes B_k)(k+k')=(B_{k'}\cdot k)B_k+(B_{k}\cdot k')B_{k'}\\
&=-B_k\times (k'\times B_{k'})-B_{k'}\times (k\times B_{k})+(B_k\cdot B_{k'})(k+k')\\
&=i\bar{\lambda}(B_k\times B_{k'}+B_{k'}\times B_k)+(B_k\cdot B_{k'})(k+k'),
\end{align*}
where the last equality follows from
$$
k\times B_k=-i\bar{\lambda}B_k\textrm{ and }k'\times B_{k'}=-i\bar{\lambda}B_{k'}\,.
$$
\end{proof}

Another important ingredient is the following geometric lemma, also taken from \cite{DS3}.

\begin{lemma}[Geometric Lemma]\label{l:split}
For every $N\in\N$ we can choose $r_0>0$ and $\bar{\lambda} > 1$ with the following property.
There exist pairwise disjoint subsets 
$$
\Lambda_j\subset\{k\in \Z^3:\,|k|=\bar{\lambda}\} \qquad j\in \{1, \ldots, N\}
$$
and smooth positive functions 
\[
\gamma^{(j)}_k\in C^{\infty}\left(B_{r_0} (\Id)\right) \qquad j\in \{1,\dots, N\}, k\in\Lambda_j
\]
such that
\begin{itemize}
\item[(a)] $k\in \Lambda_j$ implies $-k\in \Lambda_j$ and $\gamma^{(j)}_k = \gamma^{(j)}_{-k}$;
\item[(b)] For each $R\in B_{r_0} (\Id)$ we have the identity
\begin{equation}\label{e:split}
R = \frac{1}{2} \sum_{k\in\Lambda_j} \left(\gamma^{(j)}_k(R)\right)^2 \left(\Id - \frac{k}{|k|}\otimes \frac{k}{|k|}\right) 
\qquad \forall R\in B_{r_0}(\Id)\, .
\end{equation}
\end{itemize}
\end{lemma}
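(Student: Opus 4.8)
The plan is to reduce everything to soft linear algebra on the six-dimensional space $\S^{3\times 3}$, the only genuinely nontrivial ingredient being a classical fact about lattice points on spheres. Write $P_e:=\Id-e\otimes e$ for a unit vector $e\in\mathbb S^2$, so that \eqref{e:split} amounts to expressing $2R$ as a combination $\sum_{k\in\Lambda_j}\big(\gamma^{(j)}_k(R)\big)^2P_k$ with positive coefficients depending smoothly on $R$. The elementary facts I would use are: $2\Id=\sum_{i=1}^3P_{e_i}$ for every orthonormal frame $(e_1,e_2,e_3)$; and, since $\{e\otimes e:e\in\mathbb S^2\}$ spans $\S^{3\times 3}$, has affine hull $\{M:\tr M=1\}$, and $\tr P_e=2$, finitely many orthonormal frames, suitably chosen, have the property that the associated matrices $P_e$ span $\S^{3\times 3}$. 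Fixing such frames with all their directions pairwise distinct, adjoining their negatives, and giving each frame a common positive weight, one obtains a finite \emph{symmetric} set $F=-F\subset\mathbb S^2$, with mutual distances bounded below, and weights $w_f>0$, such that
\[
2\Id=\sum_{f\in F}w_fP_f\qquad\text{and}\qquad\{P_f:f\in F\}\ \text{spans}\ \S^{3\times 3}.
\]

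The arithmetic input is that, along a sequence of admissible radii $\bar\lambda\to\infty$, the normalised integer points $\{k/\bar\lambda:k\in\Z^3,\,|k|=\bar\lambda\}$ equidistribute on $\mathbb S^2$; this is the classical equidistribution theorem for integer points on the $2$-sphere (Duke, Iwaniec). Consequently, fixing first $\delta>0$ small enough that the $\delta$-balls about distinct points of $F$ are pairwise disjoint, for $\bar\lambda$ large each such ball contains at least $N$ of the points $k/\bar\lambda$. With such a $\bar\lambda$ fixed, for each $j\in\{1,\dots,N\}$ I would select---symmetrically, so that $-k$ is picked together with $k$, and with all $N|F|$ chosen points distinct---one integer point $k^{(j)}_f\in\Z^3$ with $|k^{(j)}_f|=\bar\lambda$ and $|k^{(j)}_f/\bar\lambda-f|<\delta$ for every $f\in F$, and set $\Lambda_j:=\{k^{(j)}_f:f\in F\}$. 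Then the $\Lambda_j$ are pairwise disjoint and each is symmetric, which is (a).

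For (b), fix $j$, abbreviate $P'_f:=P_{k^{(j)}_f}=\Id-\frac{k^{(j)}_f}{\bar\lambda}\otimes\frac{k^{(j)}_f}{\bar\lambda}$, and note $|P'_f-P_f|\le C\delta$. Hence $\sum_fw_fP'_f=2\Id+E$ with $|E|\le C'\delta$; since the $P'_f$ still span $\S^{3\times 3}$ for $\delta$ small, the linear map $(c_f)\mapsto\sum_fc_fP'_f$ admits a right inverse $G_j$ of bounded norm, so we may write $E=\sum_f\sigma_fP'_f$ with $|\sigma_f|\le C''\delta$ and hence $2\Id=\sum_f(w_f-\sigma_f)P'_f$ with every coefficient positive once $\delta$ is small---here it is crucial that the $w_f$ are fixed. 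Setting
\[
c^{(j)}_f(R):=(w_f-\sigma_f)+\big[G_j(2R-2\Id)\big]_f\,,
\]
which is affine in $R$, one gets $\sum_fc^{(j)}_f(R)P'_f=2R$ and $c^{(j)}_f(\Id)=w_f-\sigma_f>0$; replacing $c^{(j)}_f$ by $\tfrac12\big(c^{(j)}_f+c^{(j)}_{-f}\big)$---legitimate because $P'_{-f}=P'_f$---we may also arrange $c^{(j)}_k=c^{(j)}_{-k}$. By continuity there is $r^{(j)}_0>0$ with all $c^{(j)}_f$ positive on $B_{r^{(j)}_0}(\Id)$, and then $\gamma^{(j)}_{k^{(j)}_f}:=\sqrt{c^{(j)}_f}$ is smooth and positive there and realises \eqref{e:split}; taking $r_0:=\min_jr^{(j)}_0$ finishes the proof.

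The step I expect to be the only real obstacle is the arithmetic one: in $\R^d$ with $d\ge 4$ the multiplicativity of sums of squares yields dense sets of rational directions on a common sphere by elementary means, but on $\mathbb S^2$ this mechanism is unavailable, and density---let alone equidistribution---of integer points on spheres is a genuinely deep theorem. Everything else---the linear algebra on $\S^{3\times 3}$ and the perturbation argument turning the fixed decomposition of $2\Id$ into a smooth $R$-dependent one near $\Id$---is soft.
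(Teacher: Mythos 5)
Your construction is sound: decomposing $2\,\Id$ over directions of a few orthonormal frames whose projections $\Id-f\otimes f$ span $\mathcal{S}^{3\times 3}$, perturbing to nearby lattice directions, using a bounded right inverse of the coefficient map to produce affine coefficients with $\sum_f c^{(j)}_f(R)\bigl(\Id-\tfrac{k}{|k|}\otimes\tfrac{k}{|k|}\bigr)=2R$, symmetrizing in $\pm k$ and taking square roots near $\Id$ is exactly the soft perturbation mechanism behind the original argument, which this paper does not reproduce but quotes from \cite{DS3}. Where you genuinely diverge is the arithmetic ingredient, and there your closing assessment is mistaken: no deep input such as Duke--Iwaniec equidistribution is needed, because the lemma lets you \emph{choose} $\bar\lambda$. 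It suffices that rational points are dense on the unit sphere (elementary, via stereographic projection with rational parameters), so you may pick all $N|F|$ directions as distinct rational unit vectors close to the points of $F$; writing them over a common denominator $q$ and multiplying by $q$ places them on the single lattice sphere $|k|=q=:\bar\lambda$, after which your selection of the disjoint symmetric families $\Lambda_j$ and the perturbation step go through verbatim. Invoking equidistribution on a fixed admissible sphere is not wrong, but it is far heavier than required, and the claim that no elementary mechanism exists on $\mathbb{S}^2$ is incorrect: the cited proof is entirely elementary in this respect.
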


\subsection{The operator $\mathcal{R}$} Following \cite{DS3}, we introduce the following operator
in order to deal with the Reynolds stresses.

\begin{definition}\label{d:reyn_op}
Let $v\in C^\infty (\T^3, \R^3)$ be a smooth vector field. 
We then define $\RR v$ to be the matrix-valued periodic function
\begin{equation*}
\RR v:=\frac{1}{4}\left(\nabla\P u+(\nabla\P u)^T\right)+\frac{3}{4}\left(\nabla u+(\nabla u)^T\right)-\frac{1}{2}(\div u) \Id,
\end{equation*}
where $u\in C^{\infty}(\T^3,\R^3)$ is the solution of
\begin{equation*}
\Delta u=v-\fint_{\T^3}v\textrm{ in }\T^3
\end{equation*}
with $\fint_{\T^3} u=0$ and $\P$ is the Leray projection onto divergence-free fields with zero average.
\end{definition}

\begin{lemma}[$\RR=\textrm{div}^{-1}$]\label{l:reyn}
For any $v\in C^\infty (\T^3, \R^3)$ we have
\begin{itemize}
\item[(a)] $\RR v(x)$ is a symmetric trace-free matrix for each $x\in \T^3$;
\item[(b)] $\div \RR v = v-\fint_{\T^3}v$.
\end{itemize}
\end{lemma}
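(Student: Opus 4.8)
The plan is to verify both claims by a direct computation from Definition \ref{d:reyn_op}, using two elementary facts. First, for any smooth vector field $w$ on $\T^3$ one has the identity
\begin{equation*}
\div\bigl(\nabla w+(\nabla w)^T\bigr)=\Delta w+\nabla\div w\, .
\end{equation*}
Second, the Helmholtz/Leray decomposition on zero-average fields reads $u=\P u+\nabla\phi$, where $\phi$ is the zero-average solution of $\Delta\phi=\div u$; applying $\Delta$ and commuting it with $\nabla$ gives $\Delta\P u=\Delta u-\nabla\div u$. Note also that elliptic regularity makes $u$, and hence $\P u$, smooth, so all manipulations below are justified.

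For part (a), symmetry of $\RR v$ is immediate, since each of the three summands $\nabla\P u+(\nabla\P u)^T$, $\nabla u+(\nabla u)^T$, and $(\div u)\Id$ is symmetric. For the trace, we compute $\tr\bigl(\nabla\P u+(\nabla\P u)^T\bigr)=2\div\P u=0$ because $\P u$ is divergence-free, while $\tr\bigl(\nabla u+(\nabla u)^T\bigr)=2\div u$ and $\tr\bigl((\div u)\Id\bigr)=3\div u$. Hence
\begin{equation*}
\tr\RR v=\tfrac14\cdot 0+\tfrac34\cdot 2\div u-\tfrac12\cdot 3\div u=0\, .
\end{equation*}

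For part (b), apply $\div$ to each term using the first identity. Since $\div\P u=0$, the first term contributes $\tfrac14\Delta\P u$, the second contributes $\tfrac34(\Delta u+\nabla\div u)$, and the third contributes $-\tfrac12\nabla\div u$. Substituting $\Delta\P u=\Delta u-\nabla\div u$, the terms proportional to $\nabla\div u$ combine to $(-\tfrac14+\tfrac34-\tfrac12)\nabla\div u=0$, leaving $(\tfrac14+\tfrac34)\Delta u=\Delta u=v-\fint_{\T^3}v$, which is the asserted identity.

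I do not expect any genuine obstacle here: the only point that requires attention is bookkeeping, namely that the particular coefficients $\tfrac14$, $\tfrac34$, $-\tfrac12$ in Definition \ref{d:reyn_op} are exactly those for which the trace of $\RR v$ vanishes and, simultaneously, the spurious $\nabla\div u$ terms cancel in $\div\RR v$ — neither summand achieves both properties on its own, so it is the linear combination that matters. Everything else reduces to integration by parts on the torus and the smoothness of $u$.
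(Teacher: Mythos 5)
Your proof is correct, and it is the same direct verification (symmetry and trace bookkeeping for (a), the identity $\div(\nabla w+(\nabla w)^T)=\Delta w+\nabla\div w$ together with $\Delta\P u=\Delta u-\nabla\div u$ for (b)) that underlies the lemma in \cite{DS3}, to which this paper defers without reproducing the argument. Nothing is missing; the coefficient cancellations you exhibit are exactly the point of the definition of $\RR$.
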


\section{The inductive step}\label{s:perturbations}

In this section we specify the inductive procedure which allows to construct $(v_{q+1}, p_{q+1}, \mathring{R}_{q+1})$ from $(v_q, p_q, \mathring{R}_q)$. Note that the choice of the sequences $\{\delta_q\}_{q\in\N}$ and $\{\lambda_q\}_{q\in\N}$ specified in Proposition \ref{p:iterate} implies that, for a sufficiently large $a>1$, depending only on $b>1$ and $c>5/2$, we have:
\begin{equation}\label{e:summabilities}
\sum_{j\leq q} \delta_j \lambda_j \leq 2 \delta_q \lambda_q\, ,\quad
1 \leq \sum_{j\leq q} \delta_j^{\sfrac{1}{2}} \lambda_j \leq 2\delta_q^{\sfrac{1}{2}} \lambda_q\, ,\quad
 \sum_j \delta_j \leq \sum_j \delta_j^{\sfrac{1}{2}} \leq 2\, .
\end{equation}
Since we are concerned with a single step in the iteration, with a slight abuse of notation we will write $(v,p,\mathring{R})$ for $(v_q, p_q, \mathring{R}_q)$ and
$(v_1,p_1, \mathring{R}_1)$ for $(v_{q+1}, p_{q+1}, \mathring{R}_{q+1})$.
Our inductive hypothesis implies then the following set of estimates:
\begin{align}
&\|v\|_0 \leq 2 M,\qquad \|v\|_1 \leq 2M \delta_{q}^{\sfrac{1}{2}} \lambda_{q}\,,\label{e:v_old}\\
&\|\mathring{R}\|_0 \leq \eta\delta_{q+1}, \,\quad \|\mathring{R}\|_1 \leq M\delta_{q+1}\lambda_q\,, \label{e:R_old}\\
&\|p\|_0 \leq 2 M^2,  \qquad \|p\|_1 \leq 2M^2 \delta_q \lambda_q\,,\label{e:p_old}
\end{align}
and
\begin{equation}
\|(\partial_t+v\cdot\nabla)\mathring{R}\|_0\leq M\delta_{q+1}\delta_q^{\sfrac12}\lambda_q\,.\label{e:DtR_old}
\end{equation}

\bigskip

The new velocity $v_1$ will be defined as a sum 
$$
v_1:= v + w_o + w_c,
$$
where $w_o$ is the principal perturbation and $w_c$ is a corrector.
 The ``principal part'' of the perturbation $w$ will be a sum of Beltrami flows
 \[
w_o (t,x) := \sum_{|k|=\lambda_0} a_{k}\phi_{k}B_ke^{i\lambda_{q+1}k\cdot x}\, ,
\]
where $B_ke^{i\lambda_{q+1}k\cdot x}$ is a single Beltrami mode at frequency $\lambda_{q+1}$, with phase shift $\phi_{k}=\phi_{k}(t,x)$ (i.e. $|\phi_{k}|=1$) and amplitude $a_{k}=a_{k}(t,x)$. In the following subsections we will define $a_k$ and $\phi_k$.

\subsection{Space regularization of $v$ and $R$}

We fix a symmetric non-negative convolution kernel $\psi\in C^\infty_c (\R^3)$ and a small parameter $\ell$ (whose choice will be specified later). Define
$v_\ell:=v*\psi_\ell$ and $\mathring{R}_\ell:=\mathring{R}*\psi_\ell$,
where the convolution is in the $x$ variable only. 
Standard estimates on regularizations by convolution lead to the following:
\begin{align}
&\|v-v_{\ell}\|_0\leq C\,\delta_q^{\sfrac12}\lambda_q\ell,\label{e:v-v_ell}\\
&\|\mathring{R}-\mathring{R}_{\ell}\|_0\leq C\,\delta_{q+1}\ell,\label{e:R-R_ell}
\end{align}
and for any $N\geq 1$ there exists a constant $C=C(N)$ so that
\begin{align}
&\|v_\ell\|_N\leq C\,\delta_q^{\sfrac12}\lambda_q\ell^{1-N},\label{e:v_ell}\\
&\|\mathring{R}_\ell\|_N\leq C\,\delta_{q+1}\lambda_q\ell^{1-N}.\label{e:R_ell}
\end{align}

\subsection{Time discretization and transport for the Reynolds stress}

Next, we fix a smooth cut-off function $\chi\in C^\infty_c ((-\frac{3}{4}, \frac{3}{4})$ such that 
$$
\sum_{l\in \Z} \chi^2 (x-l) = 1,
$$ 
and a large parameter $\mu\in \N\setminus \{0\}$, whose choice will be specified later.

For any $l\in [0, \mu]$ we define
\[
\rho_l:= \frac{1}{3 (2\pi)^3} \left(e (l\mu^{-1}) \left(1-\delta_{q+2}\right) - \int_{\T^3} |v|^2 (x,l\mu^{-1})\, dx\right).
\]
Note that \eqref{e:energy_iter} implies
$$
\frac{1}{3(2\pi)^3}e (l\mu^{-1})(\tfrac{3}{4}\delta_{q+1}-\delta_{q+2})\leq \rho_l\leq \frac{1}{3(2\pi)^3}e (l\mu^{-1})(\tfrac{5}{4}\delta_{q+1}-\delta_{q+2}).
$$
We will henceforth assume
$$
\delta_{q+2}\leq \frac{1}{2}\delta_{q+1},
$$
so that we obtain
\begin{equation}\label{e:est_rhol}
C_0^{-1}(\min e)\delta_{q+1}\leq \rho_l\leq C_0(\max e)\delta_{q+1},
\end{equation}
where $C_0$ is an absolute constant.

Finally, define $R_{\ell,l}$ to be the unique solution to the transport equation
\begin{equation}\label{e:R-transport}
\left\{\begin{array}{l}
\partial_t \mathring{R}_{\ell,l} +v_\ell\cdot\nabla \mathring{R}_{\ell,l} = 0 \\
\mathring{R}_{\ell,l}(x,\frac l{\mu})=\mathring{R}_{\ell}(x,\frac{l}{\mu})\, .
\end{array}\right.
\end{equation}
and set
\begin{equation}\label{e:mathringRnew}
R_{\ell,l}(x,t):=\rho_l\Id-\mathring{R}_{\ell,l}(x,t).
\end{equation}

\subsection{The maps $v_1, w, w_o$ and $w_c$}\label{ss:def_w}

We next consider $v_\ell$ as a $2\pi$-periodic function on $\R^3\times [0,1]$ and, for every $l\in [0, \mu]$, we let $\Phi_l: \R^3\times [0,1]\to \R^3$ be the solution of 
\begin{equation}\label{e:Phi-transport}
\left\{\begin{array}{l}
\partial_t \Phi_l + v_{\ell}\cdot  \nabla \Phi_l =0\\ \\
\Phi_l (x,l \mu^{-1})=x
\end{array}\right.
\end{equation}
Observe that $\Phi_l (\cdot, t)$ is the inverse of the flow of the periodic vector-field $v_\ell$, starting at time $t=l\mu^{-1}$ as the identity. Thus, if $y\in (2\pi \Z)^3$, then $\Phi_l (x, t) - \Phi_l (x+y, t) \in (2\pi \Z)^3$:
$\Phi_l (\cdot, t)$ can hence be thought as a diffeomorphism of $\T^3$ onto itself and, for every $k\in \Z^3$, the map $\T^3 \times [0,1] \ni (x,t) \to e^{i \lambda_{q+1} k \cdot \Phi_l (x,t)}$ is well-defined.

We next apply Lemma \ref{l:split} with $N=2$, denoting by $\Lambda^e$ and $\Lambda^o$ the corresponding families of frequencies in $\Z^3$, and set $\Lambda := \Lambda^o$ + $\Lambda^e$. For each $k\in \Lambda$ and each $l\in \Z\cap[0,\mu]$ we then set
\begin{align}
\chi_l(t)&:=\chi\Bigl(\mu(t-l)\Bigr),\label{e:chi_l}\\
a_{kl}(x,t)&:=\sqrt{\rho_l}\gamma_k \left(\frac{R_{\ell,l}(x,t)}{\rho_l}\right),\label{e:a_kl}\\
w_{kl}(x,t)& := a_{kl}(x,t)\,B_ke^{i\lambda_{q+1}k\cdot \Phi_l(x,t)}.\label{e:w_kl}
\end{align}
The ``principal part'' of the perturbation $w$ consists of the map
\begin{align}\label{e:def_wo}
w_o (x,t) := \sum_{\textrm{$l$ odd}, k\in \Lambda^o} \chi_l(t)w_{kl} (x,t) +
\sum_{\textrm{$l$ even}, k\in \Lambda^e} \chi_l(t)w_{kl} (x,t)\, .
\end{align}
From now on, in order to make our notation simpler, we agree that the pairs of indices 
$(k,l)\in \Lambda\times [0, \mu]$ which enter in our summations satisfy always 
the following condition: $k\in \Lambda^e$ when $l$ is even and $k\in \Lambda^o$ when $l$ is odd.

It will be useful to introduce the ``phase" 
\begin{equation}\label{e:phi_kl}
\phi_{kl}(x,t)=e^{i\lambda_{q+1}k\cdot[\Phi_l(x,t)-x]},
\end{equation}
with which we obviously have
\[
\phi_{kl}\cdot e^{i\lambda_{q+1}k\cdot x}=e^{i\lambda_{q+1}k\cdot\Phi_l}.
\]
Since $R_{\ell,l}$ and $\Phi_l$ are defined as solutions of the transport equations \eqref{e:R-transport} and \eqref{e:Phi-transport}, we have
\begin{align}\label{e:Dt_varia=0}
(\partial_t+v_\ell\cdot\nabla)a_{kl}=0\qquad\textrm{ and }\qquad(\partial_t+v_\ell\cdot\nabla)e^{i\lambda_{q+1}k\cdot \Phi_l(x,t)}=0,
\end{align}
hence also 
\begin{equation}\label{e:Dt_w_kl=0}
(\partial_t+v_\ell\cdot\nabla)w_{kl}=0.
\end{equation}
The corrector $w_c$ is then defined in such a way that $w:= w_o+w_c$ is divergence free:
\begin{align}
w_c&:= \sum_{kl} \frac{\chi_l}{\lambda_{q+1}}\curl\left(ia_{kl}\phi_{kl}\frac{k\times B_k}{|k|^2}\right) e^{i\lambda_{q+1}k \cdot x}\nonumber\\
&=\sum_{kl}\chi_l\Bigl(\frac{i}{\lambda_{q+1}}\nabla a_{kl}-a_{kl}(D\Phi_{l}-\Id)k\Bigr)\times\frac{k\times B_k}{|k|^2}e^{i\lambda_{q+1}k\cdot\Phi_l}\label{e:corrector}
\end{align}

\begin{remark} To see that $w=w_o+w_c$ is divergence-free, just note that, since $k\cdot B_k=0$, we have
$k\times (k\times B_k)=-|k|^2B_k$ and hence $w$ can be written as
\begin{equation}\label{e:w_compact_form}
w = \frac{1}{\lambda_{q+1}} \sum_{(k,l)} \chi_l \,\curl \left( i a_{kl}\,\phi_{kl}\,\frac{k\times B_k}{|k|^2} e^{i\lambda_{q+1}k \cdot x}\right)
\end{equation}
\end{remark}

For future reference it is useful to introduce the notation
\begin{equation}\label{e:L_kl}
L_{kl}:=a_{kl}B_k+\Bigl(\frac{i}{\lambda_{q+1}}\nabla a_{kl}-a_{kl}(D\Phi_{l}-\Id)k\Bigr)\times\frac{k\times B_k}{|k|^2},
\end{equation}
so that the perturbation $w$ can be written as
\begin{equation}\label{e:w_Lform}
w=\sum_{kl}\chi_l\,L_{kl}\,e^{i\lambda_{q+1}k\cdot\Phi_l}\,.
\end{equation}
Moreover, we will frequently deal with the transport derivative with respect to the regularized flow $v_\ell$ of various expressions, and will henceforth use the notation
$$
D_t:=\partial_t+v_{\ell}\cdot\nabla.
$$

\subsection{Determination of the constants $\eta$ and $M$} 
In order to determine $\eta$, first of all recall from Lemma \ref{l:split} that the functions $a_{kl}$ are
well-defined provided 
\begin{equation*}
\biggl |\frac{R_{\ell,l}}{\rho_l} - \Id \biggr | \leq r_0\, ,
\end{equation*}
where $r_0$ is the constant of Lemma \ref{l:split}. Recalling the definition of $R_{\ell,l}$ we easily deduce from the maximum principle for transport equations (cf.\ \eqref{e:max_prin} in Proposition \ref{p:transport_derivatives}) that
$\|\mathring{R}_{\ell,l}\|_0\leq \|\mathring{R}\|_0$.
Hence, from \eqref{e:R_C0_iter} and \eqref{e:est_rhol} we obtain
\[
\biggl |\frac{R_{\ell, l}}{\rho_l} - \Id \biggr | \leq C_0 \frac{\eta}{\min e},
\]
and thus we will require that 
\begin{equation*}
C_0 \frac{\eta}{\min e} \leq \frac{r_0}{4}\, .
\end{equation*}

\medskip

The constant $M$ in turn is determined by comparing the estimate \eqref{e:v_C0_iter} for $q+1$ with the definition of the principal perturbation $w_o$ in \eqref{e:def_wo}. Indeed, using \eqref{e:chi_l}-\eqref{e:def_wo} and \eqref{e:est_rhol} we have
$\|w_o\|_{0}\leq C_0|\Lambda|(\max e)\delta_{q+1}^{\sfrac12}$.
We therefore set 
\begin{equation*}
M=2C_0|\Lambda|(\max e),
\end{equation*}
so that
\begin{equation}\label{e:W_est_0}
\norm{w_o}_0\leq \frac{M}{2} \delta_{q+1}^{\sfrac12}\,.
\end{equation}

\subsection{The pressure $p_1$ and the Reynolds stress $\mathring{R}_1$}
We set 
$$
\mathring{R}_1= R^0+R^1+R^2+R^3+R^4+R^5,
$$
where
\begin{align}
R^0 &= \mathcal R \left(\partial_tw+v_\ell\cdot \nabla w+w\cdot\nabla v_\ell\right)\label{e:R^0_def}\\
R^1 &=\mathcal R \div \Big(w_o \otimes w_o- \sum_l \chi_l^2 R_{\ell, l} 
-\textstyle{\frac{|w_o|^2}{2}}\Id\Big)\label{e:R^1_def}\\
R^2 &=w_o\otimes w_c+w_c\otimes w_o+w_c\otimes w_c - \textstyle{\frac{|w_c|^2 + 2\langle w_o, w_c\rangle}{3}} {\rm Id}\label{e:R^2_def}\\
R^3 &= w\otimes (v - v_\ell) + (v-v_\ell)\otimes w
 - \textstyle{\frac{2 \langle (v-v_{\ell}), w\rangle}{3}} \Id\label{e:R^3_def}\\
R^4&=\mathring R- \mathring{R}_\ell \label{e:R^4_def}\\
R^5&=\sum_l \chi_l^2 (\mathring{R}_{\ell} - \mathring{R}_{l,\ell})\label{e:R^5_def}\, .
\end{align}
Observe that $\mathring{R}_1$ is indeed a traceless symmetric tensor. The corresponding form of the new pressure will then be
\begin{equation}\label{e:def_p_1}
p_1=p-\frac{|w_o|^2}{2} - \frac{1}{3} |w_c|^2 - \frac{2}{3} \langle w_o, w_c\rangle - \frac{2}{3} \langle v-v_\ell, w\rangle \,.
\end{equation}

Recalling \eqref{e:mathringRnew} we see that $\sum_l \chi_l^2 \tr R_{\ell, l}$ is a function of time only. Since also 
$\sum_l \chi_l^2 = 1$, it is then straightforward to check that 
\begin{align*}
\div\mathring{R}_1 - \nabla p_1 &= \partial_t w + \div (v\otimes w + w\otimes v + w \otimes w) + \div \mathring{R} - \nabla p\nonumber\\
&=\partial_t w + \div (v\otimes w + w\otimes v + w \otimes w) + \partial_t v + \div (v\otimes v)\nonumber\\
&= \partial_t v_1 + \div v_1\otimes v_1\, .
\end{align*}

The following lemma will play a key role.
\begin{lemma}\label{l:doublesum}
The following identity holds:
\begin{equation}\label{e:doublesum}
w_o\otimes w_o = \sum_l \chi_l^2 R_{\ell, l} + \sum_{(k,l), (k',l'), k\neq - k'} \chi_l \chi_{l'} w_{kl} \otimes w_{k'l'}\, .
\end{equation}
\end{lemma}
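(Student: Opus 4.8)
The plan is to expand $w_o\otimes w_o$ as a double sum over the admissible index pairs and to isolate the ``diagonal'' contribution coming from the pairs with $k'=-k$; this contribution should be exactly $\sum_l\chi_l^2 R_{\ell,l}$, everything else being the second sum in \eqref{e:doublesum}. Indeed, using \eqref{e:def_wo} and \eqref{e:w_kl} one writes
\[
w_o\otimes w_o \;=\; \sum_{(k,l),(k',l')}\chi_l\,\chi_{l'}\;w_{kl}\otimes w_{k'l'}\,,
\]
so the lemma reduces to the identity $\sum_{(k,l),(k',l'):\,k'=-k}\chi_l\chi_{l'}\,w_{kl}\otimes w_{k'l'}=\sum_l\chi_l^2 R_{\ell,l}$.

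First I would do the index bookkeeping. For a term with $k'=-k$ to occur, the admissibility convention forces $k$ to lie in $\Lambda^e$ or $\Lambda^o$ according to the parity of $l$, \emph{and} $-k$ to lie in $\Lambda^e$ or $\Lambda^o$ according to the parity of $l'$; since $\Lambda^o\cap\Lambda^e=\emptyset$ and each of these sets is symmetric under $k\mapsto -k$ by Lemma \ref{l:split}(a), this is possible only if $l$ and $l'$ have the same parity. On the other hand the cut-offs $\chi_l,\chi_{l'}$ of \eqref{e:chi_l} have overlapping supports only when $|l-l'|\le 1$. Combining the two observations, only $l=l'$ survives, and the diagonal part collapses to $\sum_l\chi_l^2\sum_{k}w_{kl}\otimes w_{(-k)l}$, where $k$ runs over $\Lambda^e$ for even $l$ and over $\Lambda^o$ for odd $l$.

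Next I would carry out the pointwise algebra for each fixed $l$. By \eqref{e:w_kl} the two exponential factors in $w_{kl}\otimes w_{(-k)l}$ cancel, so $w_{kl}\otimes w_{(-k)l}=a_{kl}a_{(-k)l}\,B_k\otimes B_{-k}$; moreover $a_{kl}=a_{(-k)l}$, because $\gamma_k=\gamma_{-k}$ by Lemma \ref{l:split}(a) while $\rho_l$ and $R_{\ell,l}$ are real (the latter stays symmetric along \eqref{e:R-transport} since $\mathring R_\ell$ is). Summing over the relevant $k$ and invoking the elementary computation behind \eqref{e:av_of_Bel} (the average of a Beltrami flow),
\[
\sum_{k}a_{kl}^2\,B_k\otimes B_{-k}
\;=\;\tfrac12\sum_{k}a_{kl}^2\Bigl(\Id-\tfrac{k}{|k|}\otimes\tfrac{k}{|k|}\Bigr)
\;=\;\tfrac{\rho_l}{2}\sum_{k}\Bigl(\gamma_k\bigl(\tfrac{R_{\ell,l}}{\rho_l}\bigr)\Bigr)^2\Bigl(\Id-\tfrac{k}{|k|}\otimes\tfrac{k}{|k|}\Bigr),
\]
where the last equality is \eqref{e:a_kl}. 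Since the choice of $\eta$ in Section \ref{s:perturbations} guarantees $R_{\ell,l}/\rho_l\in B_{r_0}(\Id)$, the Geometric Lemma \ref{l:split}(b), applied with $R=R_{\ell,l}/\rho_l$ and with the family $\Lambda^e$ or $\Lambda^o$, identifies the right-hand side with $\rho_l\,(R_{\ell,l}/\rho_l)=R_{\ell,l}$. Summing over $l$ then yields \eqref{e:doublesum}.

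The hard part — relatively speaking, since the whole lemma is essentially bookkeeping — is the first step: one has to use \emph{simultaneously} the disjointness of $\Lambda^o$ and $\Lambda^e$ and the tight (radius $<1$) supports of the $\chi_l$ in order to exclude diagonal cross-terms with $l\neq l'$. This is precisely the structural reason the principal perturbation \eqref{e:def_wo} is built by alternating odd and even index families.
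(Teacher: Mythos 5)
Your proof is correct and follows exactly the paper's (much terser) argument: the parity convention for $(k,l)$ together with the disjoint supports of $\chi_l,\chi_{l'}$ for $|l-l'|\ge 2$ reduces the diagonal $k'=-k$ terms to $l=l'$, and then the choice \eqref{e:a_kl} of $a_{kl}$, the Beltrami computation behind \eqref{e:av_of_Bel}, and Lemma \ref{l:split} identify that diagonal part with $\sum_l\chi_l^2 R_{\ell,l}$. Your write-up merely makes explicit the bookkeeping and the $B_k\otimes B_{-k}$ pairing that the paper cites by reference.
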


\begin{proof}
Recall that the pairs $(k,l)$, $(k',l')$ are chosen so that $k\neq -k'$ if $l$ is even and 
$l'$ is odd. Moreover $\chi_l\chi_{l'} = 0$ if $l$ and $l'$ are distinct and have the same parity. 
Hence the claim follows immediately from our choice of $a_{kl}$ in \eqref{e:a_kl} and Proposition 
\ref{p:Beltrami} and Lemma \ref{l:split} (cf.\ \cite[Proposition 6.1(ii)]{DS3}).
\end{proof}

\subsection{Conditions on the parameters - hierarchy of length-scales}
In the next couple of sections we will need to estimate various expressions involving $v_\ell$ and $w$. To simplify the formulas that we arrive at, we will from now on assume the following conditions on $\mu,\lambda_{q+1}\geq 1$ and $\ell\leq 1$: 

\begin{equation}\label{e:conditions_lambdamu_2}
 \frac{\delta_{q}^{\sfrac{1}{2}}\lambda_q\ell}{\delta_{q+1}^{\sfrac{1}{2}}}\leq1, \quad
\frac{\delta_{q}^{\sfrac12}\lambda_q}{\mu} +\frac{1}{\ell\lambda_{q+1}}\leq \lambda_{q+1}^{-\beta}\quad\mbox{and}\quad
\frac{1}{\lambda_{q+1}}\leq  \frac{\delta_{q+1}^{\sfrac12}}{\mu}.
\end{equation}
These conditions imply the following orderings of length scales, which will be used to simplify the estimates in Section \ref{s:perturbation_estimates}:
\begin{equation}\label{e:ordering_params}
\frac{1}{\delta_{q+1}^{\sfrac{1}{2}}\lambda_{q+1}}\leq \frac{1}{\mu} \leq \frac{1}{\delta_q^{\sfrac{1}{2}}\lambda_{q}}\qquad
\mbox{and}\qquad \frac{1}{\lambda_{q+1}} \leq \ell \leq \frac{1}{\lambda_q}\, . 
\end{equation}
One can think of these chains of inequalities as an ordering of various length scales involved in the definition of $v_1$.

\begin{remark}
The most relevant and restrictive condition is 
$\frac{\delta_{q}^{\sfrac12}}{\mu} \leq \frac{1}{\lambda_{q}}$. 
Indeed, this condition can be thought of as a kind of CFL condition (cf.\ \cite{MR0213764}), 
restricting the coarse-grained flow to times of the order of 
$\|\nabla v\|_0^{-1}$, cf.\ Lemma \ref{l:ugly_lemma} and in particular \eqref{e:phi_l_1} below. 
Assuming only this condition on the parameters, essentially all 
the arguments for estimating the various terms would still follow through. 
The remaining inequalities are only used to simplify the many estimates needed in the rest of the
paper,
which otherwise would have a much more complicated dependence upon the various parameters. 
\end{remark}

\section{Estimates on the perturbation}\label{s:perturbation_estimates}

\begin{lemma}\label{l:ugly_lemma}
Assume \eqref{e:conditions_lambdamu_2} holds.  For $t$ in the range $\abs{\mu t-l}<1$ we have
\begin{align}
&\norm{D\Phi_l}_0\leq C\, \label{e:phi_l}\\
&\norm{D\Phi_l - \Id}_0 \leq C \frac{\delta_q^{\sfrac{1}{2}}\lambda_q}{\mu}\label{e:phi_l_1}\\
&\norm{D\Phi_l}_N\leq C \frac{\delta_q^{\sfrac{1}{2}} \lambda_q }{\mu \ell^N},& N\ge 1\label{e:Dphi_l_N}
\end{align}
Moreover,
\begin{align}
&\norm{a_{kl}}_0+\norm{L_{kl}}_0\leq C \delta_{q+1}^{\sfrac12}\label{e:L}\\
&\norm{a_{kl}}_N\leq C\delta_{q+1}^{\sfrac12}\lambda_q\ell^{1-N},&N\geq 1\label{e:Da}\\
&\norm{L_{kl}}_N\leq C\delta_{q+1}^{\sfrac12}\ell^{-N},&N\geq 1\label{e:DL}\\
&\norm{\phi_{kl}}_N\leq C \lambda_{q+1} \frac{\delta_q^{\sfrac{1}{2}} \lambda_q}{\mu \ell^{N-1}}
+ C \left(\frac{\delta_q^{\sfrac{1}{2}} \lambda_q \lambda_{q+1}}{\mu}\right)^N\nonumber \\
&\quad\qquad\;\leq C\lambda_{q+1}^{N(1-\beta)}&N\geq 1.\label{e:phi}
\end{align}
Consequently, for any $N\geq 0$
\begin{align}
&\norm{w_c}_N \leq C  \delta_{q+1}^{\sfrac12} \frac{\delta_q^{\sfrac12}\lambda_q}{\mu} 
\lambda_{q+1}^N\label{e:corrector_est},\\
&\norm{w_o}_1\leq \frac{M}{2} \delta_{q+1}^{\sfrac12} \lambda_{q+1} +
C \delta_{q+1}^{\sfrac12} \lambda_{q+1}^{1-\beta}\label{e:W_est_1}\\
&\norm{w_o}_N\leq C \delta_{q+1}^{\sfrac12}\lambda_{q+1}^N,\qquad &N\geq 2\label{e:W_est_N}
\end{align}
where the constants in \eqref{e:phi_l}-\eqref{e:phi_l_1} depend only on $M$, 
the constant in \eqref{e:Dphi_l_N} depends on $M$ and $N$, the constants in \eqref{e:L} and \eqref{e:W_est_1} 
depend on $M$ and $e$ and the remaining constants depend on $M$, $e$ and $N$.
\end{lemma}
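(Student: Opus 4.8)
The plan is to establish the estimates in the order they are listed, since each group feeds into the next. First I would treat the flow map $\Phi_l$. The bounds \eqref{e:phi_l}--\eqref{e:Dphi_l_N} are pure ODE/transport estimates for the linear equation \eqref{e:Phi-transport} driven by the smooth vector field $v_\ell$. The key input is that, on the relevant time interval $\abs{\mu t - l} < 1$, the elapsed time is at most $C\mu^{-1}$, while $\|\nabla v_\ell\|_0 \le \|v\|_1 \le 2M\delta_q^{\sfrac12}\lambda_q$ by \eqref{e:v_old}; the ``CFL'' condition $\delta_q^{\sfrac12}\lambda_q/\mu \le \lambda_{q+1}^{-\beta} \le 1$ from \eqref{e:conditions_lambdamu_2} therefore guarantees that the product $\|\nabla v_\ell\|_0 \cdot (\text{time}) \le C\delta_q^{\sfrac12}\lambda_q/\mu$ is small. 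Differentiating \eqref{e:Phi-transport} in $x$ gives a transport equation for $D\Phi_l$ with zero-order term $-(D v_\ell)D\Phi_l$; Gr\"onwall (i.e.\ the transport estimates collected in the appendix, Proposition \ref{p:transport_derivatives}) yields \eqref{e:phi_l} and, writing the Duhamel/integral form, \eqref{e:phi_l_1}. For \eqref{e:Dphi_l_N} one differentiates $N$ times, picks up products of lower derivatives of $v_\ell$ — which by \eqref{e:v_ell} cost $\delta_q^{\sfrac12}\lambda_q \ell^{1-N}$ — and again integrates over a time of length $\mu^{-1}$; the ordering \eqref{e:ordering_params} lets one absorb the lower-order terms so that the top-order term $\delta_q^{\sfrac12}\lambda_q/(\mu\ell^N)$ dominates.

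Next come the amplitudes. The bound \eqref{e:L} for $\|a_{kl}\|_0$ is immediate from the definition \eqref{e:a_kl}: $a_{kl} = \sqrt{\rho_l}\,\gamma_k(R_{\ell,l}/\rho_l)$, where $\rho_l \sim \delta_{q+1}$ by \eqref{e:est_rhol}, the argument $R_{\ell,l}/\rho_l$ stays in $B_{r_0}(\Id)$ by the choice of $\eta$, and $\gamma_k$ is smooth there, hence bounded. For $\|a_{kl}\|_N$, $N \ge 1$, one uses the chain rule (a Fa\`a di Bruno / composition estimate, e.g.\ the standard $\|f\circ u\|_N \lesssim \|f\|_{C^N}(1 + \|u\|_N)$ type bound) together with the spatial derivative bounds on $R_{\ell,l}$. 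Here $R_{\ell,l}$ solves the transport equation \eqref{e:R-transport}, so its $N$-th spatial derivatives are controlled — again via Proposition \ref{p:transport_derivatives} — by those of the initial datum $\mathring R_\ell$, which obey \eqref{e:R_ell}, plus contributions from derivatives of $v_\ell$ over time $\mu^{-1}$; one checks that the conditions \eqref{e:conditions_lambdamu_2} make the initial-datum term $\delta_{q+1}\lambda_q\ell^{1-N}$ (after dividing by $\rho_l \sim \delta_{q+1}$ and multiplying by $\sqrt{\rho_l}$) the dominant one, giving \eqref{e:Da}. The estimate \eqref{e:DL} for $L_{kl}$ then follows from its explicit definition \eqref{e:L_kl}: it is an algebraic combination of $a_{kl}$, $\lambda_{q+1}^{-1}\nabla a_{kl}$ and $a_{kl}(D\Phi_l - \Id)$; each piece is estimated by Leibniz using \eqref{e:Da}, \eqref{e:phi_l_1}, \eqref{e:Dphi_l_N}, and one verifies using $\delta_q^{\sfrac12}\lambda_q/\mu \le 1$ and $1/(\ell\lambda_{q+1}) \le 1$ that every term is at most $C\delta_{q+1}^{\sfrac12}\ell^{-N}$.

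The phase estimate \eqref{e:phi} is where the two-term structure and the second inequality of \eqref{e:conditions_lambdamu_2} matter: $\phi_{kl} = e^{i\lambda_{q+1}k\cdot(\Phi_l - x)}$, and differentiating the exponential $N$ times produces, by the Fa\`a di Bruno formula, terms of the form $\lambda_{q+1}^j \prod (D^{n_i}(\Phi_l - x))$ with $\sum n_i = N$ and $1 \le j \le N$; each derivative of $\Phi_l - x$ of order $\ge 1$ costs $\delta_q^{\sfrac12}\lambda_q/(\mu\ell^{n_i - 1})$ by \eqref{e:phi_l_1}, \eqref{e:Dphi_l_N}. The extreme cases $j = 1$ (all derivatives on one factor, giving the first displayed term $\lambda_{q+1}\,\delta_q^{\sfrac12}\lambda_q/(\mu\ell^{N-1})$) and $j = N$ (all factors first-order, giving $(\delta_q^{\sfrac12}\lambda_q\lambda_{q+1}/\mu)^N$) turn out to bound the intermediate ones, after again invoking the length-scale ordering; then the simplified bound $C\lambda_{q+1}^{N(1-\beta)}$ is read off from the middle inequality in \eqref{e:conditions_lambdamu_2}, which says precisely $\delta_q^{\sfrac12}\lambda_q/\mu + 1/(\ell\lambda_{q+1}) \le \lambda_{q+1}^{-\beta}$. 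Finally, the consequences \eqref{e:corrector_est}--\eqref{e:W_est_N} are assembled by Leibniz from the preceding bounds applied to the explicit formulas \eqref{e:def_wo}, \eqref{e:corrector}, \eqref{e:w_Lform}: for $w_o = \sum \chi_l L_{kl} e^{i\lambda_{q+1}k\cdot\Phi_l}$ one distributes $N$ derivatives among $\chi_l$ (which contributes powers of $\mu \le \delta_{q+1}^{\sfrac12}\lambda_{q+1}$), $L_{kl}$ (via \eqref{e:L}, \eqref{e:DL}, and $\ell^{-1} \le \lambda_{q+1}$), and the oscillatory factor (via \eqref{e:phi} together with the plain factor $\lambda_{q+1}$ from each derivative of $e^{i\lambda_{q+1}k\cdot x}$), the dominant contribution being $\lambda_{q+1}^N$ times $\|L_{kl}\|_0$; the improved $N=1$ statement \eqref{e:W_est_1} separates the ``main'' term $\frac{M}{2}\delta_{q+1}^{\sfrac12}\lambda_{q+1}$ (derivative landing on $e^{i\lambda_{q+1}k\cdot x}$, matched to \eqref{e:W_est_0}) from the lower-order remainder $C\delta_{q+1}^{\sfrac12}\lambda_{q+1}^{1-\beta}$ coming from $\nabla\phi_{kl}$, $\nabla a_{kl}$ and $\chi_l'$. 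The main obstacle throughout is bookkeeping: correctly tracking how the three small parameters $\ell$, $\mu^{-1}$, $\lambda_{q+1}^{-1}$ interact so that in every product of lower-order terms the conditions \eqref{e:conditions_lambdamu_2}/\eqref{e:ordering_params} let the single top-order term win; none of the individual estimates is deep, but the combinatorics of the Leibniz and Fa\`a di Bruno expansions, and the repeated appeal to the transport estimates for $\Phi_l$ and $R_{\ell,l}$ over a time window of length $\mu^{-1}$, require care.
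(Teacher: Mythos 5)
Your proposal is correct and follows essentially the same route as the paper: the transport estimates of Proposition \ref{p:transport_derivatives} (with the mollification bounds \eqref{e:v_ell}, \eqref{e:R_ell}) for $\Phi_l$ and $R_{\ell,l}$, the composition estimates \eqref{e:chain0}--\eqref{e:chain1} for $a_{kl}$ and $\phi_{kl}$, and Leibniz plus the orderings \eqref{e:conditions_lambdamu_2}/\eqref{e:ordering_params} for $L_{kl}$, $w_c$ and $w_o$. Two cosmetic slips only: the formula $\sum_{kl}\chi_l L_{kl}e^{i\lambda_{q+1}k\cdot\Phi_l}$ is $w=w_o+w_c$, not $w_o$ (whose principal part has $a_{kl}B_k$ in place of $L_{kl}$), and since $\|\cdot\|_N$ are purely spatial norms no derivative ever lands on the time cut-off $\chi_l$ in this lemma; neither affects the argument.
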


\begin{proof} The estimates \eqref{e:phi_l} and \eqref{e:phi_l_1} are direct consequences of 
\eqref{e:Dphi_near_id} in Proposition \ref{p:transport_derivatives}, together with 
\eqref{e:ordering_params}, whereas \eqref{e:Dphi_N} in Proposition \ref{p:transport_derivatives} 
combined with the convolution estimate \eqref{e:v_ell} implies \eqref{e:Dphi_l_N}.

Next, \eqref{e:R_ell} together with  \eqref{e:max_prin},\eqref{e:trans_est_0} and 
\eqref{e:trans_est_1} in Proposition \ref{p:transport_derivatives} and \eqref{e:ordering_params} leads to
\begin{align}
&\norm{R_{\ell, l}}_0 \leq C \delta_{q+1}, &\label{e:R_0}\\ 
&\norm{R_{\ell, l}}_N \leq C \delta_{q+1} \lambda_q \ell^{1-N}, &N\geq 1.\label{e:R_N}
\end{align}
The estimate \eqref{e:L} is now a consequence of \eqref{e:R_0}, \eqref{e:phi_l_1} and \eqref{e:est_rhol}, whereas
by \eqref{e:chain0} we obtain
\begin{align}
\|a_{kl}\|_N &\leq C \delta_{q+1}^{-\sfrac{1}{2}} \|R_{\ell,l}\|_N
\leq C \delta_{q+1}^{\sfrac{1}{2}} \lambda_q\ell^{1-N}\leq C\delta_{q+1}^{\sfrac12}\ell^{-N}\, .
\end{align}
Similarly we deduce \eqref{e:DL} from
\begin{align}
\|L_{kl}\|_N \leq &C  \|a_{kl}\|_N+C\lambda_{q+1}^{-1}\|a_{kl}\|_{N+1}+\nonumber\\
& + C  \left(\|a_{kl}\|_N \|D\Phi_l -\Id\|_0 + \|a_{kl}\|_0 \|D\Phi_l\|_N\right)\, \nonumber
\end{align}
and once again using \eqref{e:ordering_params}. 

In order to prove \eqref{e:phi} we apply \eqref{e:chain1} with $m=N$ to conclude
\begin{equation*}
\norm{\phi_{kl}}_N \leq C\lambda_{q+1}\|D\Phi_l\|_{N-1}+\lambda_{q+1}^N\|D\Phi_l-\Id\|_0^N,
\end{equation*}
from which \eqref{e:phi} follows using \eqref{e:phi_l_1}, \eqref{e:Dphi_l_N} and \eqref{e:conditions_lambdamu_2}.

\medskip

Using the formula \eqref{e:corrector} together with \eqref{e:phi_l_1}, \eqref{e:Dphi_l_N}, 
\eqref{e:L} and \eqref{e:DL} we conclude
\begin{align*}
\|w_c\|_0 &\;\leq \frac{C}{\lambda_{q+1}} \|a_{kl}\|_1 +  C\|a_{kl}\|_0 \|D\Phi_l - {\rm Id}\|_0
\leq C \frac{\delta_q^{\sfrac12} \lambda_q}{\mu}
\end{align*}
and, for $N\geq 1$,
\begin{align*}
\|w_c\|_N\leq &C\sum_{kl}\chi_l\left(\frac{1}{\lambda_{q+1}}\|a_{kl}\|_{N+1}+\|a_{kl}\|_0\|D\Phi_l\|_N+\|a_{kl}\|_N\|D\Phi_l-\Id\|_0\right)\\
&+C\|w_c\|_0\sum_l\chi_l\left(\lambda_{q+1}^N\|D\Phi_l\|_0^N+\lambda_{q+1}\|D\Phi_l\|_{N-1}\right)\\
&\stackrel{\eqref{e:ordering_params}}{\leq}  
C \delta_{q+1}^{\sfrac12}\lambda_{q+1}^N
\left(\frac{\lambda_q}{\lambda_{q+1}}+\frac{\delta_q^{\sfrac12}\lambda_q}{\mu}\right)
\leq C \frac{\delta_q^{\sfrac12} \lambda_q}{\mu} \lambda_{q+1}^N\, .
\end{align*}
This proves \eqref{e:corrector_est}.
The estimates for $w_o$ follow analogously, using in addition the choice of $M$ and \eqref{e:W_est_0}.
\end{proof}


\begin{lemma}\label{l:ugly_lemma_2} Recall that $D_t= \partial_t + v_\ell \cdot \nabla$. Under the assumptions of Lemma \ref{l:ugly_lemma} we have
\begin{align}
\|D_t v_\ell\|_N &\leq C \delta_q\lambda_q\ell^{-N}\, , \label{e:Dt_v}\\
\|D_t L_{kl}\|_N &\leq C \delta_{q+1}^{\sfrac{1}{2}} \delta_q^{\sfrac{1}{2}} \lambda_q\ell^{-N}\, ,\label{e:DtL}\\
\|D^2_t L_{kl}\|_N &\leq C\delta_{q+1}^{\sfrac{1}{2}} \delta_q \lambda_q\ell^{-N-1}\, ,\label{e:D2tL}\\
\norm{D_t w_c}_N &\leq C  \delta_{q+1}^{\sfrac12}\delta_q^{\sfrac12}\lambda_q\lambda_{q+1}^N\, ,\label{e:Dt_wc}\\
\norm{D_t w_o}_N &\leq C  \delta_{q+1}^{\sfrac12}\mu\lambda_{q+1}^N\, .\label{e:Dt_wo}
\end{align}
\end{lemma}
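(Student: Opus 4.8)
The plan is to estimate each transport derivative by exploiting the structure built into the construction, in particular the key identity \eqref{e:Dt_varia=0}, which says that $a_{kl}$ and $e^{i\lambda_{q+1}k\cdot\Phi_l}$ are advected by $v_\ell$. First I would handle \eqref{e:Dt_v}: since $D_t v_\ell = \partial_t v_\ell + v_\ell\cdot\nabla v_\ell$ and $\partial_t v_\ell = \partial_t v * \psi_\ell = -(\div(v\otimes v)+\nabla p - \div\mathring R)*\psi_\ell$ from \eqref{e:euler_reynolds}, one rewrites $D_t v_\ell$ as a commutator between the mollification and the transport operator, i.e. $D_t v_\ell = (v_\ell\cdot\nabla v_\ell) - (v\cdot\nabla v)*\psi_\ell - (\nabla p - \div\mathring R)*\psi_\ell$. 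The bilinear commutator $(v_\ell\cdot\nabla v_\ell) - (v\cdot\nabla v)*\psi_\ell$ gains a factor $\ell$ compared to the naive bound by a standard convolution-commutator estimate (this is the mechanism alluded to in item (d) of the introduction), yielding, together with \eqref{e:v_old}, \eqref{e:R_old}, \eqref{e:p_old} and the length-scale orderings \eqref{e:ordering_params}, the bound $\|D_t v_\ell\|_0\leq C\delta_q\lambda_q$; the higher norms follow by differentiating and using \eqref{e:v_ell}, \eqref{e:R_ell}.

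Next I would treat \eqref{e:DtL}. Applying $D_t$ to the defining formula \eqref{e:L_kl} for $L_{kl}$ and using $D_t a_{kl}=0$, the only surviving contributions come from $D_t$ hitting $D\Phi_l$ (and $\nabla a_{kl}$, via the commutator $[D_t,\nabla]=-(\nabla v_\ell)\cdot\nabla$). For $D_t D\Phi_l$ one differentiates the transport equation \eqref{e:Phi-transport} in space to get $D_t(D\Phi_l) = -(D v_\ell)\,D\Phi_l$, so $\|D_t D\Phi_l\|_0\lesssim \|Dv_\ell\|_0\|D\Phi_l\|_0\lesssim \delta_q^{1/2}\lambda_q$ by \eqref{e:v_ell} and \eqref{e:phi_l}. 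Combined with $\|a_{kl}\|_0\lesssim\delta_{q+1}^{1/2}$ from \eqref{e:L} and the commutator bound for $D_t\nabla a_{kl}$ (which costs $\|\nabla v_\ell\|_0\|\nabla a_{kl}\|_0\lesssim \delta_q^{1/2}\lambda_q\cdot\delta_{q+1}^{1/2}\lambda_q\ell$, acceptable after dividing by $\lambda_{q+1}$ because of \eqref{e:conditions_lambdamu_2}), this gives \eqref{e:DtL} for $N=0$; the higher-$N$ versions again follow by differentiating, using \eqref{e:Dphi_l_N}, \eqref{e:Da}, \eqref{e:DL} and the commutator $[D_t, D^N]$ whose lower-order terms are controlled by \eqref{e:v_ell}. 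The bound \eqref{e:D2tL} is obtained by the same scheme applied once more: $D_t^2 D\Phi_l = D_t(-(Dv_\ell)D\Phi_l) = -(D_t Dv_\ell)D\Phi_l + (Dv_\ell)^2 D\Phi_l$, and $D_t D v_\ell = D(D_t v_\ell) - (Dv_\ell)^2$ is estimated via \eqref{e:Dt_v}; the extra derivative and the second power of $\delta_q^{1/2}\lambda_q$ produce the $\delta_q\lambda_q\ell^{-N-1}$ on the right (note $\delta_q^{1/2}\lambda_q\leq \ell^{-1}$ by \eqref{e:ordering_params}).

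Finally, \eqref{e:Dt_wc} and \eqref{e:Dt_wo} are consequences of the previous two. For $w_c$ I would write it in the form $w_c = \sum_{kl}\chi_l\, \tilde L_{kl}\, e^{i\lambda_{q+1}k\cdot\Phi_l}$ where $\tilde L_{kl}:=L_{kl}-a_{kl}B_k$ (the corrector part of \eqref{e:L_kl}); then $D_t w_c = \sum_{kl}\big(\dot\chi_l \tilde L_{kl} + \chi_l\, D_t\tilde L_{kl}\big)e^{i\lambda_{q+1}k\cdot\Phi_l}$ since $D_t e^{i\lambda_{q+1}k\cdot\Phi_l}=0$. The term with $\dot\chi_l$ contributes $\mu\|\tilde L_{kl}\|_N$, and since $\|\tilde L_{kl}\|_0\lesssim\delta_{q+1}^{1/2}\delta_q^{1/2}\lambda_q/\mu$ by the estimate used for \eqref{e:corrector_est}, this is $\lesssim \delta_{q+1}^{1/2}\delta_q^{1/2}\lambda_q\lambda_{q+1}^N$; the term with $D_t\tilde L_{kl}$ is bounded the same way using the corrector-part analogue of \eqref{e:DtL}, together with \eqref{e:phi} to account for spatial derivatives landing on the exponential. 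For $w_o$ the same computation gives $D_t w_o = \sum\big(\dot\chi_l a_{kl}B_k + \chi_l D_t(a_{kl}B_k)\big)e^{i\lambda_{q+1}k\cdot\Phi_l}$, but now $D_t(a_{kl}B_k)=0$ exactly by \eqref{e:Dt_varia=0}, so only the $\dot\chi_l$ term survives, and $\mu\|a_{kl}\|_0\lesssim\mu\delta_{q+1}^{1/2}$ yields \eqref{e:Dt_wo}; spatial derivatives on the exponential contribute the $\lambda_{q+1}^N$. The main obstacle is the commutator estimate underpinning \eqref{e:Dt_v} — getting the crucial gain of $\ell$ (rather than $1$) out of $(v_\ell\cdot\nabla v_\ell)-(v\cdot\nabla v)*\psi_\ell$, which is exactly the ``classical-flavored'' commutator referred to in item (d) and is what makes the subsequent chain of estimates close with the right powers.
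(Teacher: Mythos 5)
Your proposal is correct and follows essentially the same route as the paper: the bound \eqref{e:Dt_v} is obtained there by writing the inhomogeneous transport equation for $v_\ell$ and applying the Constantin--E--Titi commutator estimate (Proposition \ref{p:CET}) to $\div\bigl((v\otimes v)*\psi_\ell - v_\ell\otimes v_\ell\bigr)$, and \eqref{e:DtL}--\eqref{e:Dt_wo} are derived exactly as you indicate from $D_t a_{kl}=0$, $D_t D\Phi_l=-D\Phi_l\,Dv_\ell$, $D_t Dv_\ell = D D_t v_\ell - (Dv_\ell)^2$ and $D_t e^{i\lambda_{q+1}k\cdot\Phi_l}=0$, so that only the $\chi_l'$ and $D_tL_{kl}$ terms survive in $D_tw_c$, $D_tw_o$. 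Your small slips (the transposed order in $D_tD\Phi_l$, the extra factor $\ell$ in the bound for $\|\nabla a_{kl}\|_0$, and attributing the mollification commutator to item (d) rather than to the Constantin--E--Titi estimate) do not affect the argument.
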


\begin{proof}{\bf Estimate on $D_tv_\ell$. }
Note that $v_{\ell}$ satisfies the inhomogeneous transport equation
\begin{align*}
\partial_t v_{\ell} + v_{\ell}\cdot\nabla v_{\ell} &= -\nabla p*\psi_{\ell} +\div ( \mathring R_{\ell} -(v\otimes v)*\psi_{\ell} +v_{\ell}\otimes v_{\ell})\, .
\end{align*}
By hypothesis $\|\nabla p * \psi_\ell\|_N \leq C  \| p\|_1 \ell^{-N} \leq C \delta_q \lambda_q \ell^{-N}$ and analogously $\|{\rm div}\, \mathring R * \psi_\ell\| \leq C \delta_{q+1} \lambda_q \ell^{-N}$. On the other hand, by Proposition \ref{p:CET}:
\[
\|{\rm div}\, \left((v\otimes v)*\psi_\ell - v_\ell \otimes v_\ell\right)\|_N \leq C \ell^{1-N} \|v\|_1^2 \leq C \ell^{1-N} \delta_q \lambda_q^2\, .
\]
Thus \eqref{e:Dt_v} follows from \eqref{e:ordering_params}.

\medskip

\noindent{\bf Estimates on $L_{kl}$.} 
Recall that $L_{kl}$ is defined as
$$
L_{kl}:=a_{kl}B_k+\Bigl(\frac{i}{\lambda_{q+1}}\nabla a_{kl}-a_{kl}(D\Phi_{l}-\Id)k\Bigr)\times\frac{k\times B_k}{|k|^2}\,.
$$
Using that 
\begin{equation}\label{e:someidentities}
\begin{split}
D_ta_{kl}&=0,\quad D_t\Phi_l=0,\\
D_t\nabla a_{kl}&=-Dv_{\ell}^T\nabla a_{kl},\quad D_tD\Phi_l=-D\Phi_lDv_\ell,
\end{split}
\end{equation}
we obtain
$$
D_tL_{kl}=\left(-\frac{i}{\lambda_{q+1}}Dv_\ell^T\nabla a_{kl}+a_{kl}D\Phi_lDv_\ell k\right)\times\frac{k\times B_k}{|k|^2}.
$$
Consequently, for times $|t-l|<\mu^{-1}$ and $N\geq 0$ we have
\begin{align*}
\|D_tL_{kl}\|_N&\leq C\delta_{q+1}^{\sfrac12}\delta_q^{\sfrac12}\lambda_q\ell^{-N}\left(\frac{\lambda_q}{\lambda_{q+1}}+\lambda_q\ell+\frac{\delta_q^{\sfrac12}\lambda_q}{\mu}+1\right)\\
&\leq C\delta_{q+1}^{\sfrac12}\delta_q^{\sfrac12}\lambda_q\ell^{-N}\, ,
\end{align*}
where we have used \eqref{e:Holderproduct}, Lemma \ref{l:ugly_lemma} and \eqref{e:ordering_params}.
Taking one more derivative and using \eqref{e:someidentities} again, we obtain
\begin{align*}
D_t^2L_{kl}=&\Bigl(-\frac{i}{\lambda_{q+1}}(D_tDv_\ell)^T\nabla a_{kl}+\frac{i}{\lambda_{q+1}}Dv_\ell^TDv_\ell^T\nabla a_{kl}+\\
&-a_{kl}D\Phi_lDv_\ell Dv_\ell k+a_{kl}D\Phi_lD_tDv_\ell k\Bigr)\times\frac{k\times B_k}{|k|^2}.
\end{align*}
Note that $D_tDv_\ell=DD_tv_\ell-Dv_\ell Dv_\ell$, so that
\begin{align*}
\|D_tDv_\ell\|_N&\leq \|D_tv_\ell\|_{N+1}+\|Dv_\ell\|_N\|Dv_\ell\|_0\\
&\leq C\delta_q\lambda_q\ell^{-N-1}\left(1+\lambda_q\ell\right)\leq C\delta_q\lambda_q\ell^{-N-1}.
\end{align*}
It then follows from the product rule \eqref{e:Holderproduct} and \eqref{e:ordering_params} that
\begin{align*}
\|D_t^2L_{kl}\|_N&\leq C\delta_{q+1}^{\sfrac{1}{2}}\delta_q\lambda_q\ell^{-N-1}\left(\frac{\lambda_q}{\lambda_{q+1}}+\frac{\lambda_q^2\ell}{\lambda_{q+1}}+\lambda_q\ell+(\lambda_q\ell)^2+\frac{\delta_q^{\sfrac12}\lambda_q}{\mu}+1\right)\\
&\leq C\delta^{\sfrac{1}{2}}_{q+1}\delta_q\lambda_q\ell^{-N-1}.
\end{align*}

\medskip

\noindent{\bf Estimates on $w_c$.} Observe that
$w_c = \sum \chi_l (L_{kl}-a_{kl} B_k) e^{i\lambda_{q+1} k\cdot \Phi_l}$ (see
\eqref{e:corrector} and \eqref{e:L_kl}). Differentiating this identity we
then conclude
\begin{align*}
D_t w_c&=\sum_{kl}\chi_l\,(D_t L_{kl})\,e^{i\lambda_{q+1}k\cdot\Phi_l}+(\partial_t\chi_l)\, (L_{kl}-a_{kl}B_k)\,e^{i\lambda_{q+1}k\cdot\Phi_l}\\
&=\sum_{kl} \chi_l (D_t L_{kl}) \phi_{kl}e^{i\lambda_{q+1} k \cdot x}+\\
&+\sum_{kl} (\partial_t \chi_l) \left(\frac{i\nabla a_{kl} }{\lambda_{q+1}}  - a_{kl} \left(D\Phi_l - \Id\right) k\right)\times \frac{k\times B_k}{|k|^2} \phi_{kl} e^{i\lambda_{q+1} k \cdot x}\,.
\end{align*}
Hence we obtain \eqref{e:Dt_wc} as a consequence of Lemma \ref{l:ugly_lemma} and \eqref{e:DtL}.

\medskip

\noindent{\bf Estimates on $w_o$.} Using \eqref{e:someidentities} we have
\[
D_t w_o = \sum_{k,l} \chi_l' a_{kl}\phi_{kl} e^{i\lambda_{q+1} k\cdot x}\, .
\]
Therefore \eqref{e:Dt_wo} follows immediately from Lemma \ref{l:ugly_lemma}.
\end{proof}

\section{Estimates on the energy}\label{s:estimates_energy}

\begin{lemma}[Estimate on the energy]\label{l:energy}
\begin{equation}\label{e:energy}
\left|e(t)(1-\delta_{q+2})-\int_{\T^3}|v_1|^2\,dx\right| \leq \frac{1}{\mu} + 
C \frac{\delta_{q+1}\delta_q^{\sfrac{1}{2}}\lambda_q}{\mu} + C\frac{\delta_{q+1}^{\sfrac{1}{2}} \delta_q^{\sfrac{1}{2}}\lambda_q}{\lambda_{q+1}}\, .
\end{equation}
\end{lemma}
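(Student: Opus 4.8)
The plan is to compute $\int_{\T^3} |v_1|^2\,dx$ by expanding $v_1 = v + w_o + w_c$ and isolating the leading term $\int |w_o|^2$, which by construction should match $3(2\pi)^3 \rho_l \sim e(t)(1-\delta_{q+2}) - \int|v|^2$ up to the claimed errors. First I would write
\[
\int |v_1|^2 = \int |v|^2 + \int |w_o|^2 + 2\int \langle v, w_o\rangle + \left(2\int\langle v+w_o, w_c\rangle + \int |w_c|^2\right),
\]
and estimate the last parenthesis trivially: using \eqref{e:corrector_est} and $\|v\|_0, \|w_o\|_0 \leq CM$ together with the ordering \eqref{e:ordering_params}, each of those terms is bounded by $C\delta_{q+1}^{\sfrac12}\delta_q^{\sfrac12}\lambda_q/\mu \leq C\delta_{q+1}^{\sfrac12}\delta_q^{\sfrac12}\lambda_q/\lambda_{q+1}$, which fits into the right-hand side of \eqref{e:energy}.

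Next I would handle the ``cross term'' $2\int\langle v, w_o\rangle$ and the main term $\int|w_o|^2$ via stationary phase. Each $w_{kl}$ carries a factor $e^{i\lambda_{q+1} k\cdot \Phi_l(x,t)}$; since $D\Phi_l$ is close to $\Id$ by \eqref{e:phi_l_1}, the phase gradient $\lambda_{q+1} (D\Phi_l)^T k$ has size $\sim \lambda_{q+1}|k|$, and integration by parts against the smooth amplitude (whose $C^1$ norm is controlled by Lemma \ref{l:ugly_lemma}) gains a factor $\lambda_{q+1}^{-1}$ per integration, at the cost of derivatives falling on $a_{kl}$, $\phi_{kl}$, $v_\ell$, etc. For the cross term, the amplitude $v_\ell \cdot a_{kl}B_k\phi_{kl}$ has $C^1$-norm $\lesssim \delta_q^{\sfrac12}\lambda_q$ (the dominant contribution being $\|v_\ell\|_1$ or $\|\phi_{kl}\|_1$, both $\lesssim \delta_q^{\sfrac12}\lambda_q$ up to lower order by \eqref{e:phi}), so $|\int\langle v, w_o\rangle| \leq C\delta_{q+1}^{\sfrac12}\delta_q^{\sfrac12}\lambda_q/\lambda_{q+1}$. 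For $\int |w_o|^2$, I would use Lemma \ref{l:doublesum}: writing $|w_o|^2 = \sum_l \chi_l^2 \tr R_{\ell,l} + \sum_{(k,l),(k',l'), k\neq -k'} \chi_l\chi_{l'}\langle w_{kl}, \overline{w_{k'l'}}\rangle$ and noting $\tr R_{\ell,l} = 3\rho_l$ (since $\mathring R_{\ell,l}$ is traceless), the first sum integrates to $3(2\pi)^3 \sum_l \chi_l^2(t)\rho_l$. The off-diagonal terms have a genuinely oscillatory phase $e^{i\lambda_{q+1}(k\cdot\Phi_l - k'\cdot\Phi_{l'})}$; when $l = l'$ the phase gradient is $\lambda_{q+1}(D\Phi_l)^T(k-k')$ with $|k-k'|\geq 1$, and when $l\neq l'$ one still gets a non-degenerate gradient of size $\sim \lambda_{q+1}$ because $D\Phi_l, D\Phi_{l'}$ are both near $\Id$ while $k \neq -k'$ (here one should note $\chi_l\chi_{l'}\neq 0$ forces $|l-l'|\leq 1$, and the parity constraint rules out the degenerate coincidences). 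One integration by parts then bounds each such integral by $C\delta_{q+1}^{\sfrac12}\delta_q^{\sfrac12}\lambda_q/\lambda_{q+1}$ (again with $\|\phi_{kl}\|_1 \lesssim \delta_q^{\sfrac12}\lambda_q$ from \eqref{e:phi} providing the dominant amplitude-derivative contribution).

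Finally I would compare $3(2\pi)^3\sum_l \chi_l^2(t)\rho_l$ with $e(t)(1-\delta_{q+2}) - \int|v|^2(x,t)\,dx$. By definition $3(2\pi)^3\rho_l = e(l\mu^{-1})(1-\delta_{q+2}) - \int|v|^2(x,l\mu^{-1})\,dx$, so using $\sum_l\chi_l^2 \equiv 1$ the difference is $\sum_l \chi_l^2(t)\big[F(l\mu^{-1}) - F(t)\big]$ where $F(s) = e(s)(1-\delta_{q+2}) - \int|v|^2(x,s)\,dx$. Since $\chi_l(t)\neq 0$ only for $|\mu t - l|<\tfrac34$, and $F$ is Lipschitz in time with $|F'| \leq \|e'\|_0 + 2\|v\|_0\|\partial_t v\|_0$, this is bounded by $C\mu^{-1}(\|e'\|_0 + \|v\|_0\|\partial_t v\|_0)$. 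The bound on $\|\partial_t v\|_0$ comes from summing the $t$-derivative estimates \eqref{e:t_derivatives} over the previous steps, which by the summability \eqref{e:summabilities} is $\lesssim \delta_q^{\sfrac12}\lambda_q$; hence this term is $\leq C\mu^{-1}(1 + \delta_q^{\sfrac12}\lambda_q) \leq \tfrac{1}{\mu} + C\delta_{q+1}\delta_q^{\sfrac12}\lambda_q/\mu$ after adjusting constants — actually one wants the cleaner split $\tfrac1\mu + C\delta_{q+1}\delta_q^{\sfrac12}\lambda_q/\mu$ as in the statement, which follows since $\|v\|_0 \leq 2M$ absorbs into the absolute $\tfrac1\mu$ while the $\delta_q^{\sfrac12}\lambda_q$-part is weighted by the small $\delta_{q+1}$. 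Collecting the three contributions — $\tfrac1\mu + C\delta_{q+1}\delta_q^{\sfrac12}\lambda_q/\mu$ from the time-freezing, and $C\delta_{q+1}^{\sfrac12}\delta_q^{\sfrac12}\lambda_q/\lambda_{q+1}$ from the oscillatory and corrector terms — yields \eqref{e:energy}.

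The main obstacle is the stationary-phase estimate on the off-diagonal terms $\sum \chi_l\chi_{l'}\langle w_{kl},\overline{w_{k'l'}}\rangle$: one must verify that the phase $k\cdot\Phi_l - k'\cdot\Phi_{l'}$ has a gradient bounded below by $c\lambda_{q+1}$ on the support of $\chi_l\chi_{l'}$ for all admissible pairs with $k\neq -k'$ (using that $\Phi_l, \Phi_{l'}$ agree to leading order via \eqref{e:phi_l_1} and the parity structure of $\Lambda^e, \Lambda^o$), and then control all the amplitude factors produced by one integration by parts — the worst being $\|\phi_{kl}\|_1 \lesssim \delta_q^{\sfrac12}\lambda_q\lambda_{q+1}$ from \eqref{e:phi}, which after the $\lambda_{q+1}^{-1}$ gain leaves exactly the claimed $\delta_{q+1}^{\sfrac12}\delta_q^{\sfrac12}\lambda_q/\lambda_{q+1}$ size. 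If a single integration by parts suffices, the argument is short; if the lower bound on the phase gradient degenerates (it should not, given the constraints), one would need to iterate, but the hierarchy \eqref{e:conditions_lambdamu_2} is designed precisely so this does not happen.
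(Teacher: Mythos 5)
Your overall skeleton is the same as the paper's (Lemma \ref{l:doublesum} for the diagonal part, which produces $\bar e(t)=3(2\pi)^3\sum_l\chi_l^2\rho_l$, stationary phase for the off--diagonal part, and a time--discretization comparison), but two of your estimates fail as stated. First, the cross terms with $v$. You bound $\int\langle v,w_c\rangle$ trivially by $\|v\|_0\|w_c\|_0\le C\delta_{q+1}^{1/2}\delta_q^{1/2}\lambda_q/\mu$ and then claim this is $\le C\delta_{q+1}^{1/2}\delta_q^{1/2}\lambda_q/\lambda_{q+1}$; that inequality is backwards, since by \eqref{e:ordering_params} one has $\mu\le\delta_{q+1}^{1/2}\lambda_{q+1}\le\lambda_{q+1}$. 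Worse, the quantity $\delta_{q+1}^{1/2}\delta_q^{1/2}\lambda_q/\mu$ is larger than every term on the right-hand side of \eqref{e:energy}: it is $\delta_{q+1}^{-1/2}$ times the middle term, it dominates the last term because $\mu\le\delta_{q+1}^{1/2}\lambda_{q+1}$, and it dominates $1/\mu$ because $\delta_{q+1}\delta_q^{1/2}\lambda_q\ge 1$. The same defect occurs in your stationary-phase treatment of $\int\langle v,w_o\rangle$: the assertion $\|\phi_{kl}\|_1\lesssim\delta_q^{1/2}\lambda_q$ is false, since \eqref{e:phi} gives $\|\phi_{kl}\|_1\sim\lambda_{q+1}\delta_q^{1/2}\lambda_q/\mu$, larger by the factor $\lambda_{q+1}/\mu\ge\delta_{q+1}^{-1/2}$, so one integration by parts again leaves the inadmissible size $\delta_{q+1}^{1/2}\delta_q^{1/2}\lambda_q/\mu$. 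The paper's device here is essential and different: by \eqref{e:w_compact_form} the \emph{whole} perturbation $w=w_o+w_c$ equals $\lambda_{q+1}^{-1}$ times a curl, so in $\int v\cdot w$ one moves the entire curl onto $v$; no derivative ever falls on $a_{kl}\phi_{kl}$, and one obtains $C\|v\|_1\delta_{q+1}^{1/2}\lambda_{q+1}^{-1}=C\delta_{q+1}^{1/2}\delta_q^{1/2}\lambda_q/\lambda_{q+1}$, handling $w_o$ and $w_c$ simultaneously. (Your off-diagonal $|w_o|^2$ estimate does survive once the bookkeeping is corrected, because there the amplitude carries two factors $a_{kl}a_{k'l'}\sim\delta_{q+1}$, so the $\phi$-derivative contribution is $\delta_{q+1}\delta_q^{1/2}\lambda_q/\mu$, which is admissible; moreover no lower bound on a nonlinear phase gradient is needed if, as in the paper, one keeps the linear phase $e^{i\lambda_{q+1}(k+k')\cdot x}$, treats $\chi_l\chi_{l'}a_{kl}a_{k'l'}\phi_{kl}\phi_{k'l'}$ as amplitude, and applies Proposition \ref{p:stat_phase}(i) with $m=1$.)

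The second gap is the time-freezing of $\int|v|^2$. Your bound via $\|v\|_0\|\partial_tv\|_0\lesssim\delta_q^{1/2}\lambda_q$ only yields $C\delta_q^{1/2}\lambda_q/\mu$, which misses the crucial factor $\delta_{q+1}$ and again exceeds the right-hand side of \eqref{e:energy}; the proposed ``absorption'' into $1/\mu+C\delta_{q+1}\delta_q^{1/2}\lambda_q/\mu$ is not legitimate, since $\delta_q^{1/2}\lambda_q\ge 1$ is huge and you identify no cancellation. The factor $\delta_{q+1}$ comes from the equation, not from product estimates: using \eqref{e:euler_reynolds}, $\frac{d}{dt}\int|v|^2=-2\int Dv:\mathring R$, so over a time interval of length $\mu^{-1}$ the drift is at most $C\|v\|_1\|\mathring R\|_0\mu^{-1}\le C\delta_{q+1}\delta_q^{1/2}\lambda_q/\mu$, which is exactly the middle term of \eqref{e:energy}. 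With these two repairs -- the curl integration by parts for $\int v\cdot w$ and the Euler--Reynolds energy-drift computation for the discretization in time -- your argument coincides with the paper's proof.
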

\begin{proof} Define
$$
\bar{e}(t):=3(2\pi)^3\sum_l\chi_l^2(t)\rho_l.
$$
Using Lemma \ref{l:doublesum} we then have
\begin{align}
|w_o|^2 &=\sum_l \chi_l^2\tr R_{\ell,l} + \sum_{(k,l), (k', l'), k\neq -k^{\prime}} \chi_l\chi_{l^{\prime}} w_{kl} \cdot w_{k,l'}\nonumber\\
&= (2\pi)^{-3}\bar{e} + \sum_{(k,l), (k',l'), k\neq - k'} \chi_l \chi_{k'} a_{kl} a_{k'l'} \phi_{kl} \phi_{k'l'} e^{i\lambda_{q+1} (k+k') \cdot x}\, .
\end{align}
Observe that $\bar{e}$ is a function of $t$ only and that, since $(k+k')\neq 0$ in the sum above, we can apply
Proposition \ref{p:stat_phase}(i) with $m=1$. From Lemma \ref{l:ugly_lemma} we then deduce
\begin{align}
& \left|\int_{\T^3}|w_o|^2\,dx-\bar{e} (t)\right|\leq C\frac{\delta_{q+1} \delta_q^{\sfrac{1}{2}} \lambda_q}{\mu}  +  C \frac{\delta_{q+1}\lambda_q}{\lambda_{q+1}} \, .\label{e:energy1}
\end{align}
Next we recall \eqref{e:w_compact_form}, integrate by parts and use \eqref{e:L} and \eqref{e:phi} to reach
\begin{equation}\label{e:energy2}
\left|\int_{\T^3}v\cdot w\,dx\right|\leq C \frac{\delta_{q+1}^{\sfrac{1}{2}} \delta_q^{\sfrac{1}{2}}\lambda_q}{\lambda_{q+1}}\, .
\end{equation}
Note also that by \eqref{e:corrector_est} we have
\begin{align}
\int_{\T^3}|w_c|^2+\abs{w_c w_o}\,dx&\leq C\frac{\delta_{q+1} \delta_q^{\sfrac{1}{2}} \lambda_q}{\mu}\,  .\label{e:energy3}
\end{align}
Summarizing, so far we have achieved
\begin{align}
&\left|\int_{\T^3} |v_1|^2 \,dx- \left(\bar{e} (t) +\int_{\T^3} |v|^2\,dx\right)\right| \stackrel{\eqref{e:energy2}}{\leq} 
\left|\int_{\T^3} |w|^2\,dx - \bar{e} (t)\right| + C \frac{\delta_{q+1}^{\sfrac{1}{2}} \delta_q^{\sfrac{1}{2}}\lambda_q}{\lambda_{q+1}}\nonumber\\
&\stackrel{\eqref{e:energy3}}{\leq} \left|\int_{\T^3} |w_o|^2 \,dx- \bar{e} (t)\right| + C \frac{\delta_{q+1}^{\sfrac{1}{2}} \delta_q^{\sfrac{1}{2}}\lambda_q}{\lambda_{q+1}} + C\frac{\delta_{q+1} \delta_q^{\sfrac{1}{2}} \lambda_q}{\mu}\nonumber\\
&\stackrel{\eqref{e:energy1}}{\leq} C \frac{\delta_{q+1}^{\sfrac{1}{2}} \delta_q^{\sfrac{1}{2}}\lambda_q}{\lambda_{q+1}} + C\frac{\delta_{q+1} \delta_q^{\sfrac{1}{2}} \lambda_q}{\mu}\,.\label{e:energy10}
\end{align}

Next, recall that
\begin{align*}
\bar{e} (t) &= 3 (2\pi)^3\sum_l \chi_l^2  \rho_l\\
&=(1-\delta_{q+2})\sum_l \chi_l^2 e\left(\frac{\mu}{l}\right)-\sum_l\chi_l^2\int_{\T^3}|v(x,l\mu^{-1})|^2\,dx\, .
\end{align*}
Since $\abs{t-\frac{l}{\mu}}<\mu^{-1}$ on the support of $\chi_l$ and since $\sum_l \chi_l^2 =1$, we have
\[
\left|e (t) - \sum_l \chi_l^2 e\left(\frac{l}{\mu}\right)\right| \leq  \mu^{-1}\, .
\]
Moreover, using the Euler-Reynolds equation, we can compute
\begin{align}
&\int_{\T^3} \left(|v(x,t)|^2-\left|v\left(x,l\mu^{-1}\right)\right|^2\right)\, dx = \int_{\frac{l}{\mu}}^t \int_{\T^3} \partial_t |v|^2\nonumber\\
=&- \int_{\frac{l}{\mu}}^t \int_{\T^3} {\rm div}\, \left(v \left(|v|^2 + 2p\right)\right) + 2 \int_{\frac{l}{\mu}}^t \int_{\T^3} v \cdot \div \mathring{R}
= -   2 \int_{\frac{l}{\mu}}^t \int_{\T^3} Dv: \mathring{R}\, .\nonumber
\end{align}
Thus, for $\left|t-\frac{l}{\mu}\right|\leq \mu^{-1}$ we conclude
\[
\left|\int_{\T^3} |v(x,t)|^2-\left|v(x,l\mu^{-1})\right|^2\, dx \right| \leq C \frac{\delta_{q+1}\delta_q^{\sfrac{1}{2}}\lambda_q}{\mu}\, .
\]
Using again $\sum \chi_l^2=1$, we then conclude
\begin{equation}\label{e:energy4}
\left|e(t) (1-\delta_{q+2}) - \left(\bar{e} (t)+ \int_{\T^3} |v(x,t)|^2 \, dx\right)\right| \leq \frac{1}{\mu} + 
C \frac{\delta_{q+1}\delta_q^{\sfrac{1}{2}}\lambda_q}{\mu}\, .
\end{equation}
The desired conclusion \eqref{e:energy} follows from \eqref{e:energy10} and \eqref{e:energy4}.
\end{proof}

\section{Estimates on the Reynolds stress}\label{s:reynolds}

In this section we bound the new Reynolds Stress $\mathring R_1$. The general pattern in estimating derivatives of the Reynolds stress is that: 
\begin{itemize}
\item the space derivative gets an extra factor of $\lambda_{q+1}$ (when the derivative falls on the exponential factor), 
\item the transport derivative gets an extra factor $\mu$ (when the derivative falls on the time cut-off). 
\end{itemize}
In fact the transport derivative is slightly more subtle, because in $R^0$ a second transport derivative of the perturbation $w$ appears, which leads to an additional term (see \eqref{e:extraterm}). Nevertheless, we organize the estimates in the following proposition according to the above pattern.

\begin{proposition}\label{p:R} For any choice of small positive numbers $\eps$ and $\beta$, there is a constant $C$ (depending only upon these parameters and on $e$ and $M$) such that, if $\mu$, $\lambda_{q+1}$ and $\ell$ satisfy the conditions \eqref{e:conditions_lambdamu_2}, then we have
\begin{align}
\|R^0\|_0 +\frac{1}{\lambda_{q+1}}\|R^0\|_1+\frac{1}{\mu}\|D_tR^0\|_0&\leq C\frac{\delta_{q+1}^{\sfrac{1}{2}} \mu}{\lambda_{q+1}^{1-\eps}}+\frac{\delta_{q+1}^{\sfrac12}\delta_q\lambda_q}{\lambda_{q+1}^{1-\eps}\mu\ell}\label{e:R0}\\
\|R^1\|_0 +\frac{1}{\lambda_{q+1}}\|R^1\|_1+\frac{1}{\mu}\|D_tR^1\|_0&\leq C \frac{\delta_{q+1}\delta_q^{\sfrac{1}{2}} \lambda_q \lambda_{q+1}^\eps}{\mu}\label{e:R1}\\
\|R^2\|_0 +\frac{1}{\lambda_{q+1}}\|R^2\|_1+\frac{1}{\mu}\|D_tR^2\|_0&\leq C \frac{\delta_{q+1}\delta_q^{\sfrac{1}{2}} \lambda_q}{\mu} \label{e:R2}\\
\|R^3\|_0 +\frac{1}{\lambda_{q+1}}\|R^3\|_1+\frac{1}{\mu}\|D_tR^3\|_0&\leq C \delta_{q+1}^{\sfrac{1}{2}} \delta_q^{\sfrac{1}{2}} \lambda_q \ell\label{e:R3}\\
\|R^4\|_0 +\frac{1}{\lambda_{q+1}}\|R^4\|_1+\frac{1}{\mu}\|D_tR^4\|_0&\leq C \frac{\delta_{q+1} \delta_q^{\sfrac{1}{2}}\lambda_q}{\mu} + C\delta_{q+1} \lambda_q \ell\label{e:R4}\\
\|R^5\|_0 +\frac{1}{\lambda_{q+1}}\|R^5\|_1+\frac{1}{\mu}\|D_tR^5\|_0&\leq C \frac{\delta_{q+1} \delta_q^{\sfrac{1}{2}}\lambda_q}{\mu}\, .\label{e:R5}
\end{align}
Thus
\begin{equation}
\begin{split}
&\|\mathring{R}_1\|_0+\frac{1}{\lambda_{q+1}}\|\mathring{R}_1\|_1 +\frac{1}{\mu}\|D_t\mathring{R}_1\|_0\leq \\
&\leq C \left(\frac{\delta_{q+1}^{\sfrac{1}{2}} \mu}{\lambda_{q+1}^{1-\eps}} + \frac{\delta_{q+1} \delta_q^{\sfrac{1}{2}} \lambda_q \lambda_{q+1}^\eps}{\mu} +
\delta_{q+1}^{\sfrac{1}{2}} \delta_q^{\sfrac{1}{2}} \lambda_q \ell+\frac{\delta_{q+1}^{\sfrac12}\delta_q\lambda_q}{\lambda_{q+1}^{1-\eps}\mu\ell}\right)\, ,\label{e:allR}
\end{split}
\end{equation}
and, moreover,
\begin{multline}
\|\partial_t \mathring{R}_1 + v_1\cdot \nabla \mathring{R}_1\|_0\leq \\
\leq C \delta_{q+1}^{\sfrac{1}{2}}\lambda_{q+1} \left(\frac{\delta_{q+1}^{\sfrac{1}{2}} \mu}{\lambda_{q+1}^{1-\eps}} 
+ \frac{\delta_{q+1} \delta_q^{\sfrac{1}{2}} \lambda_q \lambda_{q+1}^\eps}{\mu} 
+ \delta_{q+1}^{\sfrac{1}{2}} \delta_q^{\sfrac{1}{2}} \lambda_q \ell+\frac{\delta_{q+1}^{\sfrac12}\delta_q\lambda_q}{\lambda_{q+1}^{1-\eps}\mu\ell}\right).\label{e:Dt_R_all}
\end{multline}
\end{proposition}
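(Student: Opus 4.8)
The plan is to estimate each of the six pieces $R^0,\dots,R^5$ separately, combining three ingredients: (i) the structure results already proved, namely Lemma~\ref{l:doublesum} for $R^1$, the formula \eqref{e:w_Lform} and the bounds of Lemmas~\ref{l:ugly_lemma} and~\ref{l:ugly_lemma_2} for all the perturbation factors, and the convolution/transport estimates \eqref{e:v-v_ell}--\eqref{e:R_ell}; (ii) the operator $\RR$ from Definition~\ref{d:reyn_op}, for which the key fact is the stationary-phase gain: when one applies $\RR\div$ to a sum of terms of the form $F\,e^{i\lambda_{q+1}k\cdot x}$ with $k\neq 0$, each application effectively costs a factor $\lambda_{q+1}^{-1}$ (up to the small loss $\lambda_{q+1}^{\eps}$ coming from the Schauder-type estimate in the relevant appendix proposition, Proposition~\ref{p:stat_phase}); and (iii) the two bookkeeping rules highlighted before the statement — a spatial derivative costs $\lambda_{q+1}$ when it hits the exponential, and a $D_t$-derivative costs $\mu$ when it hits the cutoff $\chi_l$, since $\|\partial_t\chi_l\|_0\leq C\mu$ by \eqref{e:chi_l}. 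Throughout I will freely use the length-scale orderings \eqref{e:ordering_params} to collapse the many parameter combinations to the single dominant term recorded on each right-hand side.

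I would proceed term by term. For $R^1$: by Lemma~\ref{l:doublesum} the troublesome diagonal part of $w_o\otimes w_o$ is exactly cancelled, leaving the off-diagonal sum $\sum_{k\neq-k'}\chi_l\chi_{l'}w_{kl}\otimes w_{k'l'}$; since $k+k'\neq 0$ there, $\RR\div$ of each summand gains $\lambda_{q+1}^{-1}$, and combining with $\|a_{kl}\|_0\le C\delta_{q+1}^{1/2}$, $\|a_{kl}\|_N\le C\delta_{q+1}^{1/2}\lambda_q\ell^{1-N}$ and $\|\phi_{kl}\|_N\le C\lambda_{q+1}^{N(1-\beta)}$ yields the claimed $\delta_{q+1}\delta_q^{1/2}\lambda_q\lambda_{q+1}^{\eps}/\mu$. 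For $R^2$ (the quadratic corrector terms) and $R^3$ (the $v-v_\ell$ cross terms) no $\RR$ is needed: one just uses \eqref{e:corrector_est}, \eqref{e:v-v_ell}, \eqref{e:W_est_0}, and for the transport derivative \eqref{e:Dt_wc}, \eqref{e:Dt_wo}, \eqref{e:Dt_v}. For $R^4=\mathring R-\mathring R_\ell$ one uses directly \eqref{e:R-R_ell}, \eqref{e:R_ell} and \eqref{e:DtR_old} together with the fact that $D_t$ here is transport along $v_\ell$ so that $D_t(\mathring R-\mathring R_\ell)$ is controlled by $\|(\partial_t+v\cdot\nabla)\mathring R\|_0$ plus a commutator $(v-v_\ell)\cdot\nabla\mathring R$. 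For $R^5=\sum_l\chi_l^2(\mathring R_\ell-\mathring R_{\ell,l})$ one uses that on $\mathrm{supp}\,\chi_l$ we have $|t-l/\mu|<\mu^{-1}$, so the transport solution $\mathring R_{\ell,l}$ has not moved far from its initial datum $\mathring R_\ell(\cdot,l/\mu)$; a standard estimate (Proposition~\ref{p:transport_derivatives}) gives $\|\mathring R_\ell-\mathring R_{\ell,l}\|_0\le \mu^{-1}\|\partial_t\mathring R_\ell+v_\ell\cdot\nabla\mathring R_\ell\|_0$, and this last quantity is bounded via \eqref{e:DtR_old} plus the commutator again.

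The genuinely delicate term is $R^0=\RR(\partial_t w+v_\ell\cdot\nabla w+w\cdot\nabla v_\ell)=\RR(D_t w+w\cdot\nabla v_\ell)$. Writing $w=\sum_{kl}\chi_l L_{kl}e^{i\lambda_{q+1}k\cdot\Phi_l}$, the transport derivative $D_t$ either hits $\chi_l$ — producing $\chi_l' L_{kl}e^{i\lambda_{q+1}k\cdot\Phi_l}$, a term of size $\mu\delta_{q+1}^{1/2}$ but still oscillating at frequency $\lambda_{q+1}k$, on which $\RR$ gains $\lambda_{q+1}^{-1-\eps}$ giving the first term $\delta_{q+1}^{1/2}\mu/\lambda_{q+1}^{1-\eps}$ — or it hits $L_{kl}$, using $D_t e^{i\lambda_{q+1}k\cdot\Phi_l}=0$ from \eqref{e:Dt_varia=0}, producing $\chi_l(D_t L_{kl})e^{i\lambda_{q+1}k\cdot\Phi_l}$, bounded by \eqref{e:DtL}; similarly $w\cdot\nabla v_\ell$ is handled with \eqref{e:v_ell}. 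Applying $\RR\div$ is not available directly since $D_tw$ is not a divergence, so instead one applies $\RR$ itself and the stationary-phase estimate $\|\RR(F e^{i\lambda k\cdot x})\|_0\le C\lambda^{-1+\eps}\|F\|_0+\dots$ from Proposition~\ref{p:stat_phase}, which is where the $\lambda_{q+1}^{\eps}$ losses and the $\delta_{q+1}^{1/2}\delta_q\lambda_q/(\lambda_{q+1}^{1-\eps}\mu\ell)$ term (coming from the $C\lambda^{-N}\|F\|_N$ tail with $\|D_tL_{kl}\|_1\sim\delta_{q+1}^{1/2}\delta_q^{1/2}\lambda_q\ell^{-1}$) originate. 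The $\tfrac1\mu\|D_tR^0\|_0$ bound is the hardest sub-step: commuting $D_t$ past $\RR$ generates a commutator $[D_t,\RR]$ and, crucially, a \emph{second} transport derivative of $w$, the term flagged as \eqref{e:extraterm} in the text; this is exactly where the estimate \eqref{e:D2tL} on $D_t^2 L_{kl}$ is needed, and where the commutator estimate advertised in item (d) of the introduction does its work to keep the loss to $\lambda_{q+1}^{\eps}$. Once all six bounds are in hand, \eqref{e:allR} is immediate by the triangle inequality and \eqref{e:ordering_params}, and \eqref{e:Dt_R_all} follows by writing $\partial_t\mathring R_1+v_1\cdot\nabla\mathring R_1=D_t\mathring R_1+(v_1-v_\ell)\cdot\nabla\mathring R_1$, bounding the first term by $\mu$ times the right side of \eqref{e:allR} and the second by $\|v_1-v_\ell\|_0\|\mathring R_1\|_1\le C\delta_{q+1}^{1/2}\cdot\lambda_{q+1}\cdot(\text{right side of }\eqref{e:allR})$, using $\|v_1-v_\ell\|_0\le\|w\|_0+\|v-v_\ell\|_0\le C\delta_{q+1}^{1/2}$ from \eqref{e:W_est_0}, \eqref{e:corrector_est}, \eqref{e:v-v_ell} and \eqref{e:ordering_params}.
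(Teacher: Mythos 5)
Your overall architecture coincides with the paper's proof (term-by-term estimates, stationary phase via Proposition \ref{p:stat_phase}, the commutator identity $D_t\RR=[v_\ell,\RR]\nabla+\RR D_t$ with Proposition \ref{p:commutator} and \eqref{e:D2tL} for $D_tR^0$, $D_tR^1$, and the final splitting for \eqref{e:Dt_R_all}), but your treatment of $R^1$ has a genuine gap. The assertion that ``since $k+k'\neq 0$, $\RR\div$ of each summand gains $\lambda_{q+1}^{-1}$'' is not correct as stated: $\RR\div$ is a zero-order operator (cf.\ \eqref{e:Schauder_Rdiv}), and the gain in Proposition \ref{p:stat_phase}(ii) applies to $\RR$ acting on $(\text{amplitude})\,e^{i\lambda_{q+1}(k+k')\cdot x}$ with only amplitude norms on the right. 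When you actually compute $\div\bigl(w_{kl}\otimes w_{k'l'}-\tfrac12 (w_{kl}\cdot w_{k'l'})\Id\bigr)$, the derivative falling on the exponential produces a term of size $\lambda_{q+1}\delta_{q+1}$, which after the $\lambda_{q+1}^{-1+\eps}$ gain would still leave a contribution of order $\delta_{q+1}\lambda_{q+1}^{\eps}$ -- larger than the claimed bound $\delta_{q+1}\delta_q^{\sfrac12}\lambda_q\lambda_{q+1}^{\eps}/\mu$ by the factor $\mu/(\delta_q^{\sfrac12}\lambda_q)\geq\lambda_{q+1}^{\beta}$, and far too large for the iteration ($\|\mathring R_1\|_0$ must eventually be $\leq\eta\delta_{q+2}$). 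The paper's point is that this leading term vanishes identically: one symmetrizes the sum in $k\leftrightarrow k'$ (legitimate because $\Lambda^e,\Lambda^o$ are symmetric under $k\mapsto -k$ and the summand is symmetric under swapping $(k,l)$ and $(k',l')$) and uses Lemma \ref{l:BkBk'} together with the subtracted trace part, so that $\bigl(B_k\otimes B_{k'}+B_{k'}\otimes B_k-(B_k\cdot B_{k'})\Id\bigr)(k+k')=0$. Only the term where the derivative hits the slowly varying amplitude $\chi_l\chi_{l'}a_{kl}a_{k'l'}\phi_{kl}\phi_{k'l'}$ survives, whose $C^1$ norm is $\lesssim\delta_{q+1}\lambda_{q+1}\delta_q^{\sfrac12}\lambda_q/\mu$, and only then does stationary phase give \eqref{e:R1}. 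Without invoking this Beltrami cancellation your argument for \eqref{e:R1} (and its $C^1$ and $D_t$ versions) does not close.

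A second, smaller omission: for $D_tR^4$, $R^5$ and $D_tR^5$ your plan (``\eqref{e:DtR_old} plus the commutator $(v-v_\ell)\cdot\nabla\mathring R$'') does not suffice to bound $D_t\mathring R_\ell$. The mollification error $v_\ell\cdot\nabla\mathring R_\ell-(v\cdot\nabla\mathring R)*\psi_\ell$ cannot be handled by the triangle inequality, which only yields $\|v\|_0\|\mathring R\|_1\sim\delta_{q+1}\lambda_q$ and loses the factor $\delta_q^{\sfrac12}$ required in \eqref{e:R4}--\eqref{e:R5}; the paper rewrites it as $\div\bigl(v_\ell\otimes\mathring R_\ell-(v\otimes\mathring R)*\psi_\ell\bigr)+[(v-v_\ell)\cdot\nabla\mathring R]*\psi_\ell$ and applies the quadratic commutator estimate of Proposition \ref{p:CET} to recover $\ell\|v\|_1\|\mathring R\|_1\lesssim\delta_{q+1}\delta_q^{\sfrac12}\lambda_q$. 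Finally, a minor misattribution: the term $\delta_{q+1}^{\sfrac12}\delta_q\lambda_q/(\lambda_{q+1}^{1-\eps}\mu\ell)$ in \eqref{e:R0} does not come from the $\lambda_{q+1}^{-N}$ tails (those are killed by taking $N\beta\geq1$), but precisely from the second transport derivative $D_t^2L_{kl}$ in $\tfrac1\mu\|D_tR^0\|_0$, i.e.\ the term \eqref{e:extraterm} -- which the rest of your $R^0$ discussion does correctly identify.
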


\begin{proof}{\bf Estimates on $R^0$.}  We start by calculating
\[
\partial_tw+v_\ell\cdot \nabla w+w\cdot \nabla v_\ell=\sum_{kl}
\left(\chi_l'L_{kl}+\chi_lD_tL_{kl}+\chi_lL_{kl}\cdot \nabla v_\ell\right)e^{ik\cdot\Phi_l}\,.
\]
Define $\Omega_{kl}:= \left(\chi_l'L_{kl}+\chi_lD_tL_{kl}+\chi_lL_{kl}\cdot \nabla v_\ell\right) \phi_{kl}$
and write (recalling the identity $\phi_{kl}e^{i\lambda_{q+1}k\cdot x}=e^{i\lambda_{q+1}k\cdot\Phi_l}$),
\begin{equation}\label{e:R_0_10}
\partial_tw+v_\ell\cdot \nabla w+w\cdot \nabla v_\ell=\sum_{kl}\Omega_{kl}e^{i\lambda_{q+1}k\cdot x}\, .
\end{equation}
Using Lemmas \ref{l:ugly_lemma} and \ref{l:ugly_lemma_2} and \eqref{e:ordering_params}
\begin{align*}
\|\Omega_{kl}\|_0&\leq C \delta_{q+1}^{\sfrac12}\mu\left(1+\frac{\delta_q^{\sfrac12}\lambda_q}{\mu}\right)
\leq C \delta_{q+1}^{\sfrac12}\mu
\end{align*}
and similarly, for $N\geq 1$
\begin{align*}
\|\Omega_{kl}\|_N&\leq C\delta_{q+1}^{\sfrac12}\mu\left(\ell^{-N}+\|\phi_{kl}\|_N\right)
\leq C\delta_{q+1}^{\sfrac12}\mu\,\lambda_{q+1}^{N(1-\beta)}.
\end{align*}
Moreover, observe that although this estimate has been derived for $N$ integer, by the interpolation inequality
\eqref{e:Holderinterpolation2} it can be easily extended to any real $N\geq 1$ (besides, this fact will be used frequently in the rest of the proof).
Applying Proposition \ref{p:stat_phase}(ii) we obtain
\begin{align}
&\|R_0\|_0 \leq \sum_{kl} \left(\lambda_{q+1}^{\eps-1} \|\Omega_{kl}\|_0 + \lambda_{q+1}^{-N+\eps} [\Omega_{kl}]_N + 
\lambda_{q+1}^{-N} [\Omega_{kl}]_{N+\eps}\right)\nonumber\\
\leq & C\delta^{\sfrac{1}{2}}_{q+1} \mu\,\left(\lambda_{q+1}^{-1+\eps}+\lambda_{q+1}^{-N\beta+\eps}\right)
\end{align}
It suffices to choose $N$ so that $N\beta \geq 1$ in order to achieve 
$$
\|R^0\|_0 \leq C \delta_{q+1}^{\sfrac{1}{2}} \mu \lambda_{q+1}^{\eps-1}.
$$
As for $\|R^0\|_1$, we differentiate \eqref{e:R_0_10}. We therefore conclude
\[
\partial_j R^0 = \RR \left( \sum_{kl} (i \lambda_{q+1} k_j\Omega_{kl} + \partial_j \Omega_{kl} ) e^{i\lambda_{q+1}k\cdot x} \right)\, .
\]
Applying Proposition \ref{p:stat_phase}(ii) as before we conclude $\|R^0\|_1 \leq C \delta_{q+1}^{\sfrac{1}{2}} \mu \lambda_{q+1}^{\eps}$.

\medskip

\noindent{\bf Estimates on $D_tR^0$.} We start by calculating
\begin{align*}
&D_t \left(\partial_tw+v_\ell\cdot \nabla w+w\cdot \nabla v_\ell\right)=
\sum_{kl}  \Bigl(\partial_t^2\chi_lL_{kl}+2\partial_t\chi_lD_tL_{kl}+\chi_lD_t^2L_{kl}+\\
&+\partial_t\chi_lL_{kl}\cdot\nabla v_\ell+\chi_lD_tL_{kl}\cdot\nabla v_\ell+\chi_lL_{kl}
\cdot\nabla D_tv_\ell-\chi_lL_{kl}\cdot\nabla v_\ell\cdot \nabla v_\ell\Bigr)e^{ik\cdot\Phi_l}\\
& =:\sum_{kl}\Omega'_{kl}e^{i\lambda_{q+1}k\cdot x}.
\end{align*}
As before, we have
\begin{align}\label{e:extraterm}
\|\Omega'_{kl}\|_0&\leq C\delta_{q+1}^{\sfrac12}\mu\left(\mu+\frac{\delta_q\lambda_q}{\mu\ell}+\delta_q^{\sfrac{1}{2}}\lambda_q + \frac{\delta_q\lambda_q^2}{\mu}\right)
\leq C\delta_{q+1}^{\sfrac12}\left(\mu+\frac{\delta_q\lambda_q}{\mu\ell}\right)
\end{align}
and, for any $N\geq 1$
\begin{align*}
\|\Omega'_{kl}\|_N&\leq C\delta_{q+1}^{\sfrac12}\mu\ell^{-N}\left(\mu+\frac{\delta_q\lambda_q}{\mu\ell}+\delta_q^{\sfrac12}\lambda_q+\frac{\delta_q\lambda_q^2}{\mu}\right)+\|\Omega'_{kl}\|_0\|\phi_{kl}\|_N\\
&\leq C\delta_{q+1}^{\sfrac12}\mu\left(\mu+\frac{\delta_q\lambda_q}{\mu\ell}\right)\left(\ell^{-N}+\|\phi_{kl}\|_N\right)\\
&\leq C\delta_{q+1}^{\sfrac12}\mu\left(\mu+\frac{\delta_q\lambda_q}{\mu\ell}\right)\,\lambda_{q+1}^{N(1-\beta)}\,.
\end{align*}

Next, observe that we can write
\begin{align*}
&D_tR^0=\Bigl([D_t,\mathcal{R}]+\mathcal{R}D_t\Bigr)(\partial_tw+v_\ell\cdot \nabla w+w\cdot \nabla v_\ell)\\
=&\Bigl([v_\ell,\mathcal{R}]\nabla+\mathcal{R}D_t\Bigr)(\partial_tw+v_\ell\cdot \nabla w+w\cdot \nabla v_\ell)\\
=&\sum_{kl}[v_\ell,\mathcal{R}](\nabla\Omega_{kl}e^{i\lambda_{q+1}k\cdot x})+ i\lambda_{q+1}[v_\ell\cdot k,\mathcal{R}](\Omega_{kl}e^{i\lambda_{q+1}k\cdot x})+\mathcal{R}(\Omega'_{kl}e^{i\lambda_{q+1}k\cdot x})\, 
\end{align*}
(where, as it is customary, $[A,B]$ denotes the commutator $AB - BA$ of two operators $A$ and $B$).

Using the estimates for $\Omega'_{kl}$ derived above, and applying Proposition \ref{p:stat_phase}(ii), we obtain
\begin{align*}
\|\mathcal{R}&(\Omega'_{kl}e^{i\lambda_{q+1}k\cdot x})\|_0 \leq\frac{\|\Omega'_{kl}\|_0}{ \lambda_{q+1}^{1-\eps}} + \frac{[\Omega'_{kl}]_N}{\lambda_{q+1}^{N-\eps} }+ \frac{[\Omega'_{kl}]_{N+\eps}}{\lambda_{q+1}^{N} }\\
&\leq  C\frac{\delta^{\sfrac{1}{2}}_{q+1} \mu}{\lambda_{q+1}^{1-\eps}}\,\left(\mu+\frac{\delta_q\lambda_q}{\mu\ell}\right)\left(1+\lambda_{q+1}^{1-N\beta}\right)
\end{align*}

Furthermore, applying Proposition \ref{p:commutator} we obtain
\begin{align*}
&\left\|[v_\ell,\mathcal{R}](\nabla\Omega_{kl}e^{i\lambda_{q+1}k\cdot x})\right\|_0\leq 
C\frac{\|\Omega_{kl}\|_{N+1+\eps}\|v_\ell\|_{2+\eps} +\|\Omega_{kl}\|_{3+\eps}\|v_\ell\|_{N+\eps}}{\lambda_{q+1}^{N}}\nonumber\\
&\qquad + C\frac{\|\Omega_{kl}\|_{N+1}\|v_\ell\|_{2}+\|\Omega_{kl}\|_{3}\|v_\ell\|_N}{\lambda_{q+1}^{N-\eps}} + C
 \frac{\|\Omega_{kl}\|_1\|v_\ell\|_1}{\lambda_{q+1}^{2-\eps}}\\
 &\leq C\frac{\delta_{q+1}^{\sfrac12}\mu}{\lambda_{q+1}^{1-\eps}}\delta_q^{\sfrac12}\lambda_q\left(1+\lambda_{q+1}^{3-\beta-\beta N}\right)
 \leq C\frac{\delta_{q+1}^{\sfrac12}\mu^2}{\lambda_{q+1}^{1-\eps}}
 \left(1+\lambda_{q+1}^{3-\beta(N+1)}\right)
\end{align*}
and similarly
$$
\lambda_{q+1}\left\|[ v_\ell\cdot k,\mathcal{R}](\Omega_{kl}e^{i\lambda_{q+1}k\cdot x})\right\|_0\leq C\frac{\delta_{q+1}^{\sfrac12}\mu^2}{\lambda_{q+1}^{1-\eps}}
 \left(1+\lambda_{q+1}^{3-\beta(N+1)}\right).
$$

By choosing $N\in\N$ sufficiently large so that 
$$
\beta(N+1)\geq 3\quad\textrm{and }\quad\beta N\geq 1,
$$
we deduce 
$$
\|D_tR^0\|_0 \leq C\frac{\delta_{q+1}^{\sfrac{1}{2}} \mu}{\lambda_{q+1}^{1-\eps}}\left(\mu+\frac{\delta_q\lambda_q}{\mu\ell}\right)
$$
This concludes the proof of \eqref{e:R0}.

\medskip

\noindent{\bf Estimates on $R^1$.} 
Using Lemma \ref{l:doublesum} we have
\begin{align*}
\div&\left(w_o\otimes w_o - \sum_l \chi_l^2\mathring{R}_{\ell, l}-\frac{|w_o|^2}{2}\Id\right)=\\
=&\mathop{\sum_{(k,l),(k',l')}}_{k+k'\neq 0}\chi_l\chi_{l'}\div\left(w_{kl}\otimes w_{k'l'} - \frac{w_{kl}\cdot w_{k'l'}}{2} \Id\right) = I + II
\end{align*}
where, setting $f_{klk'l'}:=\chi_l\chi_{l'}a_{kl}a_{k'l'}\phi_{kl}\phi_{k'l'}$,
\begin{align*}
I=&\mathop{\sum_{(k,l),(k',l')}}_{k+k'\neq 0}\left(B_k\otimes B_{k'}-\tfrac{1}{2}(B_k\cdot B_{k'})\Id\right)\nabla f_{klk'l'}e^{i\lambda_{q+1}(k+k')\cdot x}\\
II=&i\lambda_{q+1}\mathop{\sum_{(k,l),(k',l')}}_{k+k'\neq 0}f_{klk'l'}\left(B_k\otimes B_{k'}-\tfrac{1}{2}(B_k\cdot B_{k'})\Id\right)(k+k')e^{i\lambda_{q+1}(k+k')\cdot x}\, .
\end{align*}
Concerning $II$, recall that the summation is over all $l\in\Z\cap[0,\mu]$ and all $k\in\Lambda^e$ if $l$ is even and all $k\in\Lambda^o$ if $l$ is odd. Furthermore, both $\Lambda^e,\Lambda^o\subset\bar{\lambda}S^2\cap\Z^3$ satisfy the conditions of Lemma \ref{l:split}. Therefore we may symmetrize the summand in $II$ in $k$ and $k'$. On the other hand, recall from Lemma \ref{l:BkBk'} that 
$$
(B_k\otimes B_{k'}+B_{k'}\otimes B_k)(k+k')=(B_k\cdot B_{k'})(k+k').
$$
From this we deduce that $II=0$.

Concerning $I$, we first note, using the product rule, \eqref{e:L} and \eqref{e:Da}, that
\begin{equation*}
[f_{klk'l'}]_N\leq C\delta_{q+1}\left(\lambda_q\ell^{1-N}+\|\phi_{kl}\phi_{k'l'}\|_N\right)
\quad \mbox{for $N\geq 1$}\,.
\end{equation*}
By Lemma \eqref{e:phi} and \eqref{e:conditions_lambdamu_2} (cf.\ \eqref{e:ordering_params})
we then conclude
\begin{align*}
[f_{klk'l'}]_1&\leq C\delta_{q+1}\left(\lambda_q+\lambda_{q+1}\frac{\delta_q^{\sfrac12}\lambda_q}{\mu}\right)
\leq C\delta_{q+1}\lambda_{q+1}\frac{\delta_q^{\sfrac12}\lambda_q}{\mu}\,,\\
[f_{klk'l'}]_N&\leq C\delta_{q+1}\lambda_{q+1}^{N(1-\beta)},\qquad N\geq 2.
\end{align*}
Applying Proposition \ref{p:stat_phase}(ii) to $I$ 
we obtain
\begin{align}
\|R^1\|_0 &\leq \mathop{\sum_{(k,l),(k',l')}}_{k+k'\neq 0} \left(\lambda_{q+1}^{\eps-1} [f_{klk'l'}]_1 + \lambda_{q+1}^{-N+\eps} [f_{klk'l'}]_{N+1} + 
\lambda_{q+1}^{-N} [f_{klk'l'}]_{N+1+\eps}\right)\nonumber\\
&\leq C\delta_{q+1}\,\left(\lambda_{q+1}^{\eps}\frac{\delta_q^{\sfrac12}\lambda_q}{\mu}+\lambda_{q+1}^{1-N\beta+\eps}\right)
\end{align}
By choosing $N$ sufficiently large we deduce 
$$
\|R^1\|_0 \leq C \frac{\delta_{q+1}\delta_q^{\sfrac{1}{2}} \lambda_q \lambda_{q+1}^\eps}{\mu}
$$
as required. Moreover, differentiating we conclude $\partial_j R^1 = \RR (\partial_j I)$ where
\begin{align}
\partial_j I &= \mathop{\sum_{(k,l),(k',l')}}_{k+k'\neq 0}\left(B_k\otimes B_{k'}-\tfrac{1}{2}(B_k\cdot B_{k'})\Id\right)\cdot\nonumber\\
&\qquad\qquad\qquad(i \lambda_{q+1} (k+k')_j \nabla f_{klk'l'}+ \partial_j\nabla f_{klk'l'} ) e^{i\lambda_{q+1}
(k+k')\cdot x}\, .
\end{align}
Therefore we apply again Proposition \ref{p:stat_phase}(ii) to conclude the desired estimate
for $\|R^1\|_1$.

\medskip

\noindent{\bf Estimates on $D_tR^1$.}
As in the estimate for $D_tR^0$, we again make use of the identity
$D_t\mathcal{R}=[v_{\ell},\mathcal{R}]\nabla+\mathcal{R}D_t$ in order to write
\begin{align*}
 &D_tR^1=\mathop{\sum_{(k,l),(k',l')}}_{k+k'\neq 0}\left([v_\ell,\mathcal{R}]\left(\nabla U_{klk'l'}e^{i\lambda_{q+1}(k+k')\cdot x}\right)\right.\\
&+i\lambda_{q+1}[v_\ell\cdot (k+k'),\mathcal{R}]\left(U_{klk'l'}e^{i\lambda_{q+1}(k+k')\cdot x}\right)
+\left.\mathcal{R}\left(U'_{klk'l'}e^{i\lambda_{q+1}(k+k')\cdot x}\right)\right),
\end{align*}
where we have set 
$U_{klk'l'} = \left(B_k\otimes B_{k'}-\tfrac{1}{2}(B_k\cdot B_{k'})\Id\right)\nabla f_{klk'l'}$ and
\begin{align*}
D_t\div\bigl(w_o\otimes w_o - \sum_l \chi_l^2\mathring{R}_{\ell, l}- \frac{|w_o|^2}{2}\Id\bigr)&=\mathop{\sum_{(k,l),(k',l')}}_{k+k'\neq 0}U'_{klk'l'}e^{i\lambda_{q+1}(k+k')\cdot x}.
\end{align*}
In order to further compute $U'_{klk'l'}$, we write
\begin{align*}
\nabla &f_{klk'l'}\,e^{i\lambda_{q+1}(k+k')\cdot x}=\chi_l\chi_{l'}\left(a_{kl}\nabla a_{k'l'}+a_{k'l'}\nabla a_{kl}\right)e^{i\lambda_{q+1}(k\cdot\Phi_l+k'\cdot\Phi_{l'})}
+\\
&+i\lambda_{q+1}\chi_l\chi_{l'}a_{kl}a_{k'l'}\left((D\Phi_l-\Id)k+(D\Phi_{l'}-\Id)k'\right)e^{i\lambda_{q+1}(k\cdot\Phi_l+k'\cdot\Phi_{l'})}
\end{align*}
and hence, using \eqref{e:someidentities},
\begin{align*}
D_t&\left(\nabla f_{klk'l'}\,e^{i\lambda_{q+1}(k+k')\cdot x}\right)=(\chi_l\chi_{l'})'\left(a_{kl}\nabla a_{k'l'}+a_{k'l'}\nabla a_{kl}\right)e^{i\lambda_{q+1}(k\cdot\Phi_l+k'\cdot\Phi_{l'})}\\
&+ i\lambda_{q+1}(\chi_l\chi_{l'})'a_{kl}a_{k'l'}\left((D\Phi_l-\Id)k+(D\Phi_{l'}-\Id)k'\right)e^{i\lambda_{q+1}(k\cdot\Phi_l+k'\cdot\Phi_{l'})}\\
&-\chi_l\chi_{l'}\left(a_{kl}Dv_\ell^T\nabla a_{k'l'}+a_{k'l'}Dv_\ell^T\nabla a_{kl}\right)e^{i\lambda_{q+1}(k\cdot\Phi_l+k'\cdot\Phi_{l'})}\\
&-\chi_l\chi_{l'}a_{kl}a_{k'l'}\left(D\Phi_lDv_\ell^Tk+D\Phi_{l'}Dv_\ell^Tk'\right)e^{i\lambda_{q+1}(k\cdot\Phi_l+k'\cdot\Phi_{l'})}\\
&  = : \left(\Sigma^1_{klk'l'} + \Sigma^2_{klk'l'} + \Sigma^3_{klk'l'} + \Sigma^4_{klk'l'}\right)
e^{i\lambda_{q+1}(k\cdot\Phi_l+k'\cdot\Phi_{l'})} \\
&=: \Sigma_{klk'l'} e^{i\lambda_{q+1}(k\cdot\Phi_l+k'\cdot\Phi_{l'})} \, .
\end{align*}
Ignoring the subscripts we can use
\eqref{e:Holderproduct}, Lemma \ref{l:ugly_lemma} and Lemma \ref{l:ugly_lemma_2} to estimate
\begin{align*}
\|\Sigma\|_N &\leq C\delta_{q+1}\lambda_q \ell^{-N}(\mu+\lambda_{q+1}\delta_q^{\sfrac12}+\delta_q^{\sfrac12}
\lambda_q + \delta_q^{\sfrac12})
\stackrel{\eqref{e:ordering_params}}{\leq} 
C\delta_{q+1} \lambda_{q+1}\delta_q^{\sfrac12}\lambda_q \ell^{-N}\, .
\end{align*}
We thus conclude
\begin{align*}
\|U'_{klk'l'}\|_N&\leq C \|\Sigma_{klk'l'}\|_N + C \|\Sigma_{klk'l'}\|_0\|\phi_{kl}\phi_{k'l'}\|_N\nonumber\\
&\leq C \delta_{q+1} \lambda_{q+1} \delta_q^{\sfrac12} \lambda_q 
\left( \ell^{-N} + \lambda_{q+1}^{N(1-\beta)}\right) \leq C \delta_{q+1} \delta_q^{\sfrac12} \lambda_q \lambda_{q+1}^{1+ N (1-\beta)}
\end{align*}
The estimate on $\|D_tR^1\|_0$ now follows exactly as above for $D_tR^0$ applying Proposition \ref{p:commutator} to the commutator terms. This concludes the verification of \eqref{e:R1}.

\medskip

\noindent{\bf Estimates on $R^2$ and $D_t R^2$.} 
Using Lemma \ref{l:ugly_lemma} we have
\begin{align*}
\|R^2\|_0 &\leq C(\|w_c\|^2_0 + \|w_o\|_0 \|w_c\|_0)
\leq C \delta_{q+1}\frac{\delta_q^{\sfrac12}\lambda_q}{\mu}\, ,\\
\|R^2\|_1 &\leq C(\|w_c\|_1 \|w_c\|_0 + \|w_o\|_1 \|w_c\|_0 + \|w_o\|_0 \|w_c\|_1)
\leq C \delta_{q+1} \frac{\delta_q^{\sfrac12}\lambda_q}{\mu}\lambda_{q+1}\,.
\end{align*}
Similarly, with the Lemmas \ref{l:ugly_lemma} and \ref{l:ugly_lemma_2} we achieve
\begin{align*}
\norm{D_t R^2}_0&\leq C\norm{D_t w_c}_0(\norm{w_o}_0+\norm{w_c}_0) + C\|D_t w_o\|_0 \|w_c\|_0
\leq C \delta_{q+1} \delta_q^{\sfrac{1}{2}}\lambda_q\, .
\end{align*}

\medskip

{\bf Estimates on $R^3$ and $D_t R^3$.} The estimates on $\|R^3\|_0$ and $\|R^3\|_1$ are a direct consequence of the mollification estimates \eqref{e:v-v_ell} and \eqref{e:v_ell} as well as Lemma \ref{l:ugly_lemma}.
Moreover, 
\begin{align}
\|D_t R^3\|_0 &\leq \|v-v_\ell\|_0 \|D_t w\|_0 + (\|D_t v\|_0 + \|D_t v_\ell\|) \|w\|_0\nonumber\\
&=\|v-v_\ell\|_0 \left(\|D_t w_c\|_0 + \|D_t w_o\|\right) + (\|D_t v\|_0 + \|D_t v_\ell\|) \|w\|_0
\end{align}
Concerning $D_tv$, note that, by our inductive hypothesis
\begin{align*}
\|D_t v\|_0 &\leq \|\partial_t v + v \cdot \nabla v\|_0 + 
\|v-v_\ell\|_0 \|v\|_1\\ 
&\leq \|p_q\|_1 + \|\mathring{R}_q\|_1 + C\delta_q \lambda_q^2 \ell
\leq C \delta_q\lambda_q\, .
\end{align*}
Thus the required estimate on $D_t R^3$ follows from Lemma \ref{l:ugly_lemma_2}.

\medskip

{\bf Estimates on $R^4$ and $D_t R^4$.}  From the mollification estimates \eqref{e:R-R_ell} and \eqref{e:R_ell} we deduce
\begin{align*}
\|R^4\|_0 &\leq C\|\mathring{R}\|_1 \ell \leq C \delta_{q+1}\lambda_q \ell\\
\|R^4\|_1 &\leq 2 \|\mathring{R}\|_1 \leq C \delta_{q+1} \lambda_q\,.
\end{align*}
As for $D_t R^4$, observe first that, using our inductive hypothesis, 
\begin{align*}
\|D_t \mathring{R}\|_0 &\leq \|\partial_t \mathring{R} + v \cdot \nabla \mathring{R}\|_0 
+ \|v_\ell -v\|_0 \|\mathring{R}\|_1
\leq C\delta_{q+1} \delta_q^{\sfrac{1}{2}} \lambda_q + C \delta_{q+1} \delta_q^{\sfrac{1}{2}} \lambda_q^2 \ell
\end{align*}
Moreover,
\begin{align}
 & D_t \mathring{R}_\ell = (D_t \mathring{R})*\psi_\ell + v_\ell \cdot \nabla \mathring{R}_\ell - 
(v_{\ell} \cdot \nabla \mathring{R})*\psi_\ell\nonumber\\
= &(D_t \mathring{R})*\psi_\ell+\div\left(v_\ell\otimes \mathring{R}_\ell - 
(v\otimes \mathring{R})*\psi_\ell\right) {+ [(v - v_\ell) \cdot \nabla \mathring{R}]*\psi_\ell}\, ,
\label{e:DtRell}
\end{align}
where we have used that $\div v=0$.
Using Proposition \ref{p:CET} we deduce
\begin{align}
&\|v_\ell \otimes \mathring{R}_\ell - (v \otimes \mathring{R})*\psi_\ell\|_1
\leq C  \delta_{q+1} \delta_q^{\sfrac{1}{2}} \lambda_q \lambda_q\ell\, .\label{e:use_CET}
\end{align}
Gathering all the estimates we then achieve 
\[
\|D_t R^4\|_0 \leq \|D_t \mathring{R}\|_0 + \|D_t \mathring{R}_\ell\|_0 \leq C \delta_{q+1} \lambda_q \delta_q^{\sfrac{1}{2}} \, .
\]  
The estimate \eqref{e:R4} follows now using \eqref{e:ordering_params}. 

\medskip

{\bf Estimates on $R^5$.} Recall that $D_t\mathring{R}_{\ell,l}=0$. 
Therefore, using the arguments from \eqref{e:DtRell} 
\begin{align*}
\|D_t(\mathring{R}_\ell-\mathring{R}_{\ell,l})\|_0&  = \|D_t\mathring{R}_\ell\|_0
\leq C\delta_{q+1}\delta_q^{\sfrac12}\lambda_q\, .
\end{align*}
On the other hand, using again the identity \eqref{e:DtRell} and Proposition \ref{p:CET}
\begin{align*}
\|D_t(\mathring{R}_\ell-\mathring{R}_{\ell,l})\|_1& = \|D_t\mathring{R}_\ell\|_1
\leq  C \ell^{-1} \|D_t \mathring{R}\|_0 + \|v_\ell \otimes \mathring{R}_\ell 
- (v\otimes \mathring{R})*\psi_\ell \|_2\\
&+ C \ell^{-1} \|v-v_\ell\|_0\|\mathring{R}\|_1 \leq C \delta_{q+1} \delta_q^{\sfrac12} \lambda_q \ell^{-1}\, . 
\end{align*}
Since $\mathring{R}_{\ell,l}(x,t\mu^{-1})=\mathring{R}_\ell(x,  t\mu^{-1})$,
the difference $\mathring{R}_\ell-\mathring{R}_{\ell,l}$ vanishes at $t=l\mu^{-1}$.
From Proposition \ref{p:transport_derivatives} we deduce that, 
for times $t$ in the support of $\chi_l$ (i.e. $|t-l\mu^{-1}|<\mu^{-1}$), 
\begin{align*}
\|\mathring{R}_\ell-\mathring{R}_{\ell,l}\|_0 &\leq C\mu^{-1} \|D_t(\mathring{R}_\ell-\mathring{R}_{\ell,l})\|_0 \leq C \mu^{-1} \delta_{q+1} \delta_q^{\sfrac{1}{2}} \lambda_q\\
\|\mathring{R}_\ell-\mathring{R}_{\ell,l}\|_1& \leq C\mu^{-1}\|D_t(\mathring{R}_\ell-\mathring{R}_{\ell,l})\|_1 \leq C \mu^{-1} \delta_{q+1} \delta_q^{\sfrac{1}{2}} \lambda_q \ell^{-1}\, .
\end{align*}
The desired estimates on $\|R^5\|_0$ and $\|R^5\|_1$ follow then easily using \eqref{e:ordering_params}.

\medskip

{\bf Estimate on $D_t R^5$.} In this case we compute
\[
D_t R^5 = \sum_l 2 \chi_l \partial_t \chi_l (\mathring{R}_\ell - \mathring{R}_{\ell, l}) + \sum_l \chi_l^2 D_t \mathring{R}_\ell\,.
\]
The second summand has been estimate above and,
since $\|\partial_t \chi_l\|_0 \leq C \mu$, the first summand can be estimated by
$C \mu\delta_{q+1} \delta_q^{\sfrac{1}{2}} \lambda_q \mu^{-1}$ (again appealing to the arguments
above). 

\medskip

{\bf Proof of \eqref{e:Dt_R_all}.} To achieve this last inequality, observe that
\[
\|\partial_t \mathring{R}_1 + v_1\cdot \nabla \mathring{R}_1\|_0 \leq \|D_t \mathring{R}_1\|_0 + \left(\|v-v_\ell\|_0 + \|w\|_0\right)\|\mathring{R}_1\|_1 \, .
\]
On the other hand, by \eqref{e:v-v_ell}, $\|v-v_\ell\|_0 \leq C \delta_q^{\sfrac{1}{2}} \lambda_q \ell$. Moreover, by \eqref{e:W_est_0}, \eqref{e:corrector_est} and \eqref{e:ordering_params} 
$\|w\|\leq \|w_o\|_0 + \|w_c\|_0 \leq  C \delta_{q+1}^{\sfrac{1}{2}}$. Thus, by \eqref{e:allR} we conclude
\begin{align*}
& \|\partial_t \mathring{R}_1 + v_1\cdot \nabla \mathring{R}_1\|_0 \leq
C \left(\mu + \delta_{q+1}^{\sfrac{1}{2}}\lambda_{q+1}\right)\\
&\qquad 
\left(\frac{\delta_{q+1}^{\sfrac{1}{2}} \mu}{\lambda_{q+1}^{1-\eps}} + \frac{\delta_{q+1} \delta_q^{\sfrac{1}{2}} \lambda_q \lambda_{q+1}^\eps}{\mu} +
\delta_{q+1}^{\sfrac{1}{2}} \delta_q^{\sfrac{1}{2}} \lambda_q \ell+\frac{\delta_{q+1}^{\sfrac12}\delta_q\lambda_q}{\lambda_{q+1}^{1-\eps}\mu\ell}\right)
\end{align*}
Since by \eqref{e:ordering_params} $\mu \leq \delta_{q+1}^{\sfrac{1}{2}}\lambda_{q+1}$, \eqref{e:Dt_R_all}
follows easily.
\end{proof}


\section{Conclusion of the proof}\label{s:conclusion}

In Sections \ref{s:perturbations}-\ref{s:reynolds} we showed the construction for a single step, referring to $(v_q,p_q,\mathring{R}_q)$ as 
$(v,p,\mathring{R})$ and to $(v_{q+1},p_{q+1},\mathring{R}_{q+1})$ as $(v_1,p_1,\mathring{R}_1)$. From now on we will consider the full iteration again, hence using again the indices $q$ and $q+1$. 

In order to proceed, recall that the sequences $\{\delta_q\}_{q\in\N}$ and $\{\lambda_q\}_{q\in\N}$ 
are chosen to satisfy
\[
\delta_q=a^{-b^q},\quad a^{cb^{q+1}}\leq \lambda_q \leq 2 a^{cb^{q+1}}
\]
for some given constants $c>5/2$ and $b>1$ and for $a>1$. 
Note that this has the consequence that if $a$ is chosen sufficiently large (depending only on $b>1$) then 
\begin{align}\label{e:condition_deltalambda}
\delta_q^{\sfrac12}\lambda_q^{\sfrac15}\leq 
\delta_{q+1}^{\sfrac12}\lambda_{q+1}^{\sfrac15},\quad\delta_{q+1}\leq\delta_q,\quad\textrm{ and }\quad\lambda_q\leq \lambda_{q+1}^{\frac{2}{b+1}}\,.
\end{align}
\medskip

\subsection{Choice of the parameters $\mu$ and $\ell$}

We start by specifying the parameters $\mu=\mu_q$ and $\ell=\ell_q$: we determine them
optimizing the right hand side of
\eqref{e:allR}. More precisely, we set 
\begin{equation}\label{e:choice_mu}
\mu := \delta_{q+1}^{\sfrac{1}{4}} \delta_q^{\sfrac{1}{4}} \lambda_q^{\sfrac{1}{2}} \lambda_{q+1}^{\sfrac{1}{2}}
\end{equation}
so that the first two expressions in \eqref{e:allR} are equal, and then, having determined $\mu$, set
\begin{equation}\label{e:choice_ell}
\ell := \delta_{q+1}^{-\sfrac{1}{8}}\delta_{q}^{\sfrac{1}{8}}\lambda_{q}^{-\sfrac{1}{4}}\lambda_{q+1}^{-\sfrac{3}{4}}
\end{equation}
so that the last two expressions in \eqref{e:allR} are equal (up to a factor $\lambda_{q+1}^\eps$). 

In turn, these choices lead to 
\begin{align}
\|\mathring{R}_{q+1}\|_0+\frac{1}{\lambda_{q+1}}\|\mathring{R}_{q+1}\|_1
&\leq C\delta_{q+1}^{\sfrac34}\delta_q^{\sfrac14}\lambda_q^{\sfrac12}\lambda_{q+1}^{\eps-\sfrac12}+
C\delta_{q+1}^{\sfrac38}\delta_q^{\sfrac58}\lambda_q^{\sfrac34}\lambda_{q+1}^{\eps-\sfrac34}\nonumber\\
&=C\delta_{q+1}^{\sfrac34}\delta_q^{\sfrac14}\lambda_q^{\sfrac12}\lambda_{q+1}^{\eps-\sfrac12}
\left(1+\left(\frac{\delta_q^{\sfrac12}\lambda_q^{\sfrac13}}{\delta_{q+1}^{\sfrac12}\lambda_{q+1}^{\sfrac13}}\right)^{\sfrac34}\right)\nonumber\\
&\stackrel{\eqref{e:condition_deltalambda}}{\leq} C\delta_{q+1}^{\sfrac34}\delta_q^{\sfrac14}\lambda_q^{\sfrac12}\lambda_{q+1}^{\eps-\sfrac12}\, .\label{e:Rq+1}
\end{align}
Observe also that by \eqref{e:Dt_R_all}, we have
\begin{equation}\label{e:Rq+1_adv}
\|\partial_t \mathring{R}_{q+1} + v_{q+1} \cdot \nabla \mathring{R}_{q+1}\|_0
\leq C \delta_{q+1}^{\sfrac{1}{2}} \lambda_{q+1} \left(\delta_{q+1}^{\sfrac34}\delta_q^{\sfrac14}\lambda_q^{\sfrac12}\lambda_{q+1}^{\eps-\sfrac12}\right).
\end{equation}

Let us check that the conditions \eqref{e:conditions_lambdamu_2} are satisfied for some $\beta>0$ (remember that $\beta$ should be independent of $q$).
To this end we calculate
\begin{align*}
\frac{\delta_q^{\sfrac12}\lambda_q\ell}{\delta_{q+1}^{\sfrac12}}&=\left(\frac{\delta_q^{\sfrac12}\lambda_q^{\sfrac35}}{\delta_{q+1}^{\sfrac12}\lambda_{q+1}^{\sfrac35}}\right)^{\sfrac54}\,,\qquad
\frac{\delta_q^{\sfrac12}\lambda_q}{\mu}=\left(\frac{\delta_q^{\sfrac12}\lambda_q}{\delta_{q+1}^{\sfrac12}\lambda_{q+1}}\right)^{\sfrac12}\,,\\
\frac{1}{\ell\lambda_{q+1}}&=\left(\frac{{\delta_{q+1}^{\sfrac12}}\lambda_q}
{{ \delta_{q}^{\sfrac12}}\lambda_{q+1}}\right)^{\sfrac14}\,,\qquad 
\frac{\mu}{\delta_{q+1}^{\sfrac12}\lambda_{q+1}}=\left(\frac{\delta_{q}^{\sfrac12}\lambda_q}{\delta_{q+1}^{\sfrac12}\lambda_{q+1}}\right)^{\sfrac12}\,.
\end{align*}
Hence the conditions \eqref{e:conditions_lambdamu_2} follow from
\eqref{e:condition_deltalambda} choosing $\beta=\frac{b-1}{5b+5}$.

\subsection{Proof of Proposition \ref{p:iterate}}
Fix the constants $c> \frac{5}{2}$ and $b>1$ 
and also an $\eps>0$ whose choice, like that of $a>1$, will be specified later. 
The proposition is proved inductively. The initial triple is defined to be $(v_0, p_0, \mathring{R}_0)= (0,0,0)$. 
Given now $(v_q, p_q, \mathring{R}_q)$ satisfying the estimates \eqref{e:v_C0_iter}-\eqref{e:R_Dt_iter},
we claim that the triple $(v_{q+1}, p_{q+1}, \mathring{R}_{q+1})$ constructed above  
satisfies again all the corresponding estimates.

\smallskip

\noindent{\bf Estimates on $\mathring{R}_{q+1}$.}
Note first of all that, using the form of the estimates in \eqref{e:allR} and \eqref{e:Dt_R_all}, the estimates \eqref{e:R_C1_iter} and \eqref{e:R_Dt_iter} follow from \eqref{e:R_C0_iter}. On the other hand, in light of \eqref{e:Rq+1}, \eqref{e:R_C0_iter} follows from the recursion relation
$$
C\delta_{q+1}^{\sfrac34}\delta_q^{\sfrac14}\lambda_q^{\sfrac12}\lambda_{q+1}^{\eps-\sfrac12}\leq \eta\delta_{q+2}.
$$
Using our choice of $\delta_q$ and $\lambda_q$ from Proposition \ref{p:iterate}, we see that this inequality is equivalent to
$$
C\leq a^{\frac{1}{4}b^q(1+3b-2cb+(2c-4-4\eps c)b^2)},
$$
which, since $b>1$, is satisfied for all $q\geq 1$ for a sufficiently large fixed constant $a>1$, provided 
$$
\left(1+3b-2cb+(2c-4-4\eps c)b^2\right)>0.
$$
Factorizing, we obtain the inequality $(b-1)((2c-4)b-1)-4\eps cb^2>0$. It is then easy to see that for any $b>1$ and $c>5/2$ there exists
$\eps>0$ so that this inequality is satisfied. In this way we can choose $\eps>0$ (and $\beta$ above) depending solely on $b$ and $c$. Consequently, this choice will determine all the constants in the estimates in Sections \ref{s:perturbations}-\ref{s:reynolds}. We can then pick $a>1$ sufficiently large so that \eqref{e:R_C0_iter}, and hence also \eqref{e:R_C1_iter} and \eqref{e:R_Dt_iter}  hold for $\mathring{R}_{q+1}$.

\smallskip

\noindent{\bf Estimates on $v_{q+1}-v_q$.} By \eqref{e:W_est_0}, Lemma \ref{l:ugly_lemma} and
\eqref{e:conditions_lambdamu_2} we conclude
\begin{align}
\|v_{q+1} - v_q\|_0 &\leq \|w_o\|_0 + \|w_c\|_0 \leq \delta_{q+1}^{\sfrac{1}{2}} \left(\frac{M}{2} +
\lambda_{q+1}^{-\beta}\right)\\
\|v_{q+1} - v_q\|_1 &\leq \|w_o\|_1 + \|w_c\|_1 \leq \delta_{q+1}^{\sfrac{1}{2}} \left(\frac{M}{2} +
\lambda_{q+1}^{-\beta}\right)\, .
\end{align}
Since $\lambda_{q+1} \geq \lambda_1 \geq a^{cb^2}$, for $a$ sufficiently large we conclude
\eqref{e:v_C0_iter} and \eqref{e:v_C1_iter}.

\smallskip

\noindent{\bf Estimate on the energy.} Recall Lemma \ref{l:energy}
and observe that, by \eqref{e:conditions_lambdamu_2}, 
$\frac{\delta_{q+1}^{\sfrac12}\delta_q^{\sfrac12}\lambda_q}{\lambda_{q+1}}\leq 
\frac{\delta_{q+1}^{\sfrac12}\mu}{\lambda_{q+1}}$. Moreover,
\[
\delta_{q+1} \delta_q^{\sfrac{1}{2}} \lambda_q = 
a^{-b^{q+1} - b^q/2 + c b^{q+1}} = a^{b^q ((c-1) b - 1/2)}
\geq a\geq 1\, .
\]
So the right hand side of \eqref{e:energy} is smaller than 
$C \frac{\delta_{q+1} \delta_q^{\sfrac12} \lambda_q}{\mu}
+ C \frac{\delta_{q+1}^{\sfrac12} \mu}{\lambda_{q+1}}$, i.e. smaller (up to a constant factor) than
the right hand side of \eqref{e:allR}. Thus, the argument used above to prove \eqref{e:R_C0_iter}
gives also \eqref{e:energy_iter}.

\smallskip

\noindent{\bf Estimates on $p_{q+1} - p_1$.}
From the definition of $p_{q+1}$ in \eqref{e:def_p_1} we deduce
\begin{align*}
\|p_{q+1}-p_q\|_0 &\leq \frac{1}{2} (\|w_o\|_0 + \|w_c\|_0)^2 + C\ell \|v_q\|_1 \|w\|_0
\end{align*}
As already argued in the estimate for \eqref{e:v_C0_iter}, $\|w_o\| + \|w_c\|\leq M \delta^{\sfrac12}_q$.
Moreover $C \ell \|v_1\|_1\|w\|_0 \leq C M \delta_{q+1}^{\sfrac12} \delta_q^{\sfrac12} \lambda_q \ell$,
which is smaller than the right hand side of \eqref{e:allR}. Having already argued that such
quantity is smaller than $\eta \delta_{q+2}$ we can obviously bound $C \ell \|v_q\|_1 \|w\|_0$
with $\frac{M^2}{2} \delta_{q+1}$. This shows \eqref{e:p_C0_iter}. Moreover,
differentiating \eqref{e:def_p_1} we achieve the
bound
\begin{align*}
\|p_{q+1}-p_q\|_1 &\leq (\|w_o\|_1 + \|w_c\|_1) (\|w_o\|_0 + \|w_c\|_0) + C \delta_{q+1}^{\sfrac{1}{2}}
\delta_q^{\sfrac{1}{2}} \lambda_q \lambda_{q+1} \ell 
\end{align*}
and arguing as above we conclude \eqref{e:p_C1_iter}.

\smallskip

\noindent{\bf Estimates \eqref{e:t_derivatives}.} Here we can use the obvious identity 
$\partial_tw_q=D_tw_q - (v_q)_\ell\cdot\nabla w_q$ together with Lemmas \ref{l:ugly_lemma} and \ref{l:ugly_lemma_2}
to obtain $\|\partial_t v_{q+1} - \partial_t v_q\|_0 \leq C \delta_{q+1}^{\sfrac{1}{2}} \lambda_{q+1}$
Then, using \eqref{e:summabilities}, we conclude $\|\partial_t v_q\|_0 \leq C \delta_q^{\sfrac{1}{2}} \lambda_q$. 

To handle $\partial_t p_{q+1} - \partial_t p_q$ observe first that, by our construction,
\begin{align}
&\|\partial_t (p_{q+1} - p_q)\|_0 \leq (\|w_c \|_0 + \|w_o\|_0) (\|\partial_t w_c\|_0+ \|\partial_t w_o\|_0)\nonumber\\
&\qquad\qquad\qquad\qquad + 2 \|w\|_0\|\partial_t v_q\|_0 
+ \ell\|v_q\|_1\|\partial_t w\|_0\, .\nonumber
\end{align}
As above, we can derive the estimates 
$\|\partial_t w_o\|_0 +\|\partial_t w_c\|_0\leq C \delta_{q+1}^{\sfrac{1}{2}} \lambda_{q+1}$ from Lemmas \ref{l:ugly_lemma} and \ref{l:ugly_lemma_2}.
Hence
\begin{align}
\|\partial_t (p_{q+1} - p_q)\|_0 &\leq C \delta_{q+1} \lambda_{q+1} + C \delta_{q+1}^{\sfrac{1}{2}} \delta_q^{\sfrac{1}{2}} \lambda_q + C \delta_q^{\sfrac{1}{2}} \lambda_q \ell \delta_{q+1}^{\sfrac{1}{2}} \lambda_{q+1}\, .
\end{align}
Since $\ell \leq \lambda_q^{-1}$ and $\delta_q^{\sfrac{1}{2}}\lambda_q\leq \delta_{q+1}^{\sfrac{1}{2}} \lambda_{q+1}$, the desired inequality follows.
This concludes the proof.

\appendix
\section{H\"older spaces}\label{s:hoelder}

In the following $m=0,1,2,\dots$, $\alpha\in (0,1)$, and $\beta$ is a multi-index. We introduce the usual (spatial) 
H\"older norms as follows.
First of all, the supremum norm is denoted by $\|f\|_0:=\sup_{\T^3\times [0,1]}|f|$. We define the H\"older seminorms 
as
\begin{equation*}
\begin{split}
[f]_{m}&=\max_{|\beta|=m}\|D^{\beta}f\|_0\, ,\\
[f]_{m+\alpha} &= \max_{|\beta|=m}\sup_{x\neq y, t}\frac{|D^{\beta}f(x, t)-D^{\beta}f(y, t)|}{|x-y|^{\alpha}}\, ,
\end{split}
\end{equation*}
where $D^\beta$ are {\em space derivatives} only.
The H\"older norms are then given by
\begin{eqnarray*}
\|f\|_{m}&=&\sum_{j=0}^m[f]_j\\
\|f\|_{m+\alpha}&=&\|f\|_m+[f]_{m+\alpha}.
\end{eqnarray*}
Moreover, we will write $[f (t)]_\alpha$ and $\|f (t)\|_\alpha$ when the time $t$ is fixed and the
norms are computed for the restriction of $f$ to the $t$-time slice.

Recall the following elementary inequalities:
\begin{equation}\label{e:Holderinterpolation}
[f]_{s}\leq C\bigl(\varepsilon^{r-s}[f]_{r}+\varepsilon^{-s}\|f\|_0\bigr)
\end{equation}
for $r\geq s\geq 0$, $\eps>0$, and 
\begin{equation}\label{e:Holderproduct}
[fg]_{r}\leq C\bigl([f]_r\|g\|_0+\|f\|_0[g]_r\bigr)
\end{equation}
for any $1\geq r\geq 0$. From \eqref{e:Holderinterpolation} with $\eps=\|f\|_0^{\frac1r}[f]_r^{-\frac1r}$ we obtain the 
standard interpolation inequalities
\begin{equation}\label{e:Holderinterpolation2}
[f]_{s}\leq C\|f\|_0^{1-\frac{s}{r}}[f]_{r}^{\frac{s}{r}}.
\end{equation}

Next we collect two classical estimates on the H\"older norms of compositions. These are also standard, for instance
in applications of the Nash-Moser iteration technique.

\begin{proposition}\label{p:chain}
Let $\Psi: \Omega \to \mathbb R$ and $u: \R^n \to \Omega$ be two smooth functions, with $\Omega\subset \R^N$. 
Then, for every $m\in \mathbb N \setminus \{0\}$ there is a constant $C$ (depending only on $m$,
$N$ and $n$) such that
\begin{align}
\left[\Psi\circ u\right]_m &\leq C ([\Psi]_1 [u]_m+[\Psi]_m \|u\|_0^{m-1} [u]_m)\label{e:chain0}\\
\left[\Psi\circ u\right]_m &\leq C ([\Psi]_1 [u]_m+[\Psi]_m [u]_1^{m} )\, .
\label{e:chain1}
\end{align} 
\end{proposition}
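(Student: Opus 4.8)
The plan is to reduce both inequalities to the Fa\`a di Bruno formula followed by interpolation. For a multi-index $\beta$ with $|\beta|=m$, repeated use of the chain and Leibniz rules gives an expansion
\[
D^\beta(\Psi\circ u)=\sum_{j=1}^{m}\ \sum_{\beta_1+\cdots+\beta_j=\beta,\ |\beta_i|\geq 1} c_{\beta_1\cdots\beta_j}\,(D^j\Psi)(u)\,(D^{\beta_1}u)\cdots(D^{\beta_j}u),
\]
in which the combinatorial coefficients $c_{\beta_1\cdots\beta_j}$ and the number of summands are both bounded in terms of $m$ and $n$ only. Since $|(D^j\Psi)(u)|\leq C[\Psi]_j$ and $\|D^{\beta_i}u\|_0\leq[u]_{|\beta_i|}$, taking sup-norms of each factor yields
\[
[\Psi\circ u]_m\leq C\sum_{j=1}^{m}\ \sum_{|\beta_1|+\cdots+|\beta_j|=m,\ |\beta_i|\geq 1}[\Psi]_j\,\prod_{i=1}^{j}[u]_{|\beta_i|}
\]
with $C=C(m,n)$. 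The case $m=1$ is just the chain rule, so assume $m\geq 2$.

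For \eqref{e:chain0} I would first interpolate each factor of $u$ between $\|u\|_0$ and $[u]_m$ via \eqref{e:Holderinterpolation2} with $r=m$, namely $[u]_{|\beta_i|}\leq C\|u\|_0^{1-|\beta_i|/m}[u]_m^{|\beta_i|/m}$; since $\sum_i|\beta_i|=m$, the product telescopes to $\prod_i[u]_{|\beta_i|}\leq C\|u\|_0^{j-1}[u]_m$. To eliminate the intermediate seminorms $[\Psi]_j$ I would apply \eqref{e:Holderinterpolation2} to $D\Psi$ in place of $f$ (legitimate since $m-1\geq j-1\geq 0$), obtaining $[\Psi]_j\leq C[\Psi]_1^{(m-j)/(m-1)}[\Psi]_m^{(j-1)/(m-1)}$. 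Writing $\|u\|_0^{j-1}=(\|u\|_0^{m-1})^{(j-1)/(m-1)}$, each summand is then bounded by
\[
C\,\bigl([\Psi]_1\bigr)^{\frac{m-j}{m-1}}\bigl([\Psi]_m\|u\|_0^{m-1}\bigr)^{\frac{j-1}{m-1}}[u]_m\leq C\bigl([\Psi]_1+[\Psi]_m\|u\|_0^{m-1}\bigr)[u]_m,
\]
where the last step is the weighted arithmetic--geometric mean (Young's) inequality with weights $\tfrac{m-j}{m-1},\tfrac{j-1}{m-1}$ (the cases $j=1$ and $j=m$ being trivial). Summing the finitely many terms gives \eqref{e:chain0}.

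For \eqref{e:chain1} the only change is to interpolate each factor of $u$ between $[u]_1$ and $[u]_m$ instead: applying \eqref{e:Holderinterpolation2} to $Du$ gives $[u]_{|\beta_i|}\leq C[u]_1^{(m-|\beta_i|)/(m-1)}[u]_m^{(|\beta_i|-1)/(m-1)}$, hence, using $\sum_i|\beta_i|=m$ again, $\prod_i[u]_{|\beta_i|}\leq C[u]_1^{m(j-1)/(m-1)}[u]_m^{(m-j)/(m-1)}$. Combined with the same bound $[\Psi]_j\leq C[\Psi]_1^{(m-j)/(m-1)}[\Psi]_m^{(j-1)/(m-1)}$, each summand becomes
\[
C\,\bigl([\Psi]_1[u]_m\bigr)^{\frac{m-j}{m-1}}\bigl([\Psi]_m[u]_1^{m}\bigr)^{\frac{j-1}{m-1}}\leq C\bigl([\Psi]_1[u]_m+[\Psi]_m[u]_1^{m}\bigr)
\]
by Young's inequality, and summing finishes the proof.

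The only place I expect to spend effort is the bookkeeping: checking that the Fa\`a di Bruno count depends on $m$ and $n$ alone, verifying that the interpolation exponents add up to exactly the totals claimed (this uses $\sum_i|\beta_i|=m$ in an essential way), and disposing of the degenerate indices $j\in\{1,m\}$. No genuinely delicate estimate enters; the content is purely in organizing the combinatorics so that the powers of $\|u\|_0$, of $[u]_1$, and of the two seminorms of $\Psi$ come out precisely as in \eqref{e:chain0} and \eqref{e:chain1}.
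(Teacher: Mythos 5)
The paper itself does not prove Proposition \ref{p:chain}; it is quoted as a classical Nash--Moser-type estimate, so the only question is whether your argument is correct. It is not, and the problem is concentrated in one step: the interpolation $[\Psi]_j\leq C[\Psi]_1^{(m-j)/(m-1)}[\Psi]_m^{(j-1)/(m-1)}$, which you obtain by ``applying \eqref{e:Holderinterpolation2} to $D\Psi$''. The inequality \eqref{e:Holderinterpolation2} is an estimate for functions on the torus (equivalently, it holds on all of $\R^N$); it is \emph{false} for functions on a general, in particular bounded, set $\Omega\subset\R^N$ with a constant depending only on $m$ and $N$. Concretely, take $m=3$, $j=2$, $\Psi(y)=|y-y_0|^2$ on $\Omega=B_\delta(y_0)$: then $[\Psi]_1\leq 2\delta$, $[\Psi]_2=2$, $[\Psi]_3=0$, so the claimed interpolation would give $2\leq C\cdot 0$. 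With this $\Psi$ your per-summand bound $[\Psi]_2\|u\|_0[u]_3\leq C([\Psi]_1)^{1/2}([\Psi]_3\|u\|_0^2)^{1/2}[u]_3=0$ is likewise false, so the proof genuinely breaks rather than merely lacking a justification. Note that this is exactly the situation in which the proposition is used in the paper: $\Psi=\gamma^{(j)}_k$ is only defined on the ball $B_{r_0}(\Id)$, not on all of $\R^N$. The statement itself survives in this example only because of a compensation you never invoke: when $\Omega$ is small, the image of $u$ is small, and interpolating $u$ \emph{around a constant} (e.g. $u-y_0$, whose sup norm is at most $\delta$) gives $[u]_1[u]_2\leq C\delta[u]_3\leq C[\Psi]_1[u]_3/[\Psi]_2$, which rescues the intermediate Fa\`a di Bruno term. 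In other words, the intermediate seminorms $[\Psi]_j$, $1<j<m$, cannot be split multiplicatively between $[\Psi]_1$ and $[\Psi]_m$; they must be absorbed using interpolation in $u$ (legitimate, since $u$ is defined on $\R^n$ or $\T^n$) at a scale tied to the oscillation of $u$, together with an additive interpolation for $\Psi$ at that scale (which requires some geometry of $\Omega$, e.g. convexity, or $\Omega=\R^N$).

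The rest of your argument is sound: the Fa\`a di Bruno expansion with constants depending only on $m$, $n$, $N$, the bound $|(D^j\Psi)(u)|\leq C[\Psi]_j$, the interpolation of the factors $[u]_{|\beta_i|}$ against $\|u\|_0$ (for \eqref{e:chain0}) or against $[u]_1$ via $Du$ (for \eqref{e:chain1}), the exponent bookkeeping using $\sum_i|\beta_i|=m$, and the final Young inequality are all correct as written. Indeed, if the hypothesis were $\Omega=\R^N$ your proof would be complete. As stated, with $\Psi$ defined only on a subset $\Omega$ (the case actually needed), the elimination of the intermediate $[\Psi]_j$ is the whole content of the proposition, and your treatment of it is the one step that fails.
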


\section{Estimates for transport equations}\label{s:transport_equation}

In this section we recall some well known results regarding smooth solutions of
the \emph{transport equation}:
\begin{equation}\label{e:transport}
\left\{\begin{array}{l}
\partial_t f + v\cdot  \nabla f =g,\\ 
f|_{t_0}=f_0,
\end{array}\right.
\end{equation}
where $v=v(t,x)$ is a given smooth vector field. 
We denote the material derivative $\partial_t+v\cdot \nabla$ by $D_t$. We will consider solutions
on the entire space $\R^3$ and treat solutions on the torus simply as periodic solution in $\R^3$.

\begin{proposition}\label{p:transport_derivatives}
Any solution $f$ of \eqref{e:transport} satisfies
\begin{align}
\|f (t)\|_0 &\leq \|f_0\|_0 + \int_{t_0}^t \|g (\tau)\|_0\, d\tau\,,\label{e:max_prin}\\
[f(t)]_1 &\leq [f_0]_1e^{(t-t_0)[v]_1} + \int_{t_0}^t e^{(t-\tau)[v]_1} [g (\tau)]_1\, d\tau\,,\label{e:trans_est_0}
\end{align}
and, more generally, for any $N\geq 2$ there exists a constant $C=C_N$ so that
\begin{align}
[f (t)]_N & \leq \Bigl([ f_0]_N + C(t-t_0)[v]_N[f_0]_1\Bigr)e^{C(t-t_0)[v]_1}+\nonumber\\
&\qquad +\int_{t_0}^t e^{C(t-\tau)[v]_1}\Bigl([g (\tau)]_N + [v ]_N [g (\tau)]_{1}\Bigr)\,d\tau.
\label{e:trans_est_1}
\end{align}
Define $\Phi (t, \cdot)$ to be the inverse of the flux $X$ of $v$ starting at time $t_0$ as the identity
(i.e. $\frac{d}{dt} X = v (X,t)$ and $X (x, t_0 )=x$). Under the same assumptions as above:
\begin{align}
\norm{D\Phi (t) -\Id}_0&\leq e^{(t-t_0)[v]_1}-1\,,  \label{e:Dphi_near_id}\\
[\Phi (t)]_N &\leq C(t-t_0)[v]_Ne^{C(t-t_0)[v]_1} \qquad \forall N \geq 2.\label{e:Dphi_N}
\end{align}
\end{proposition}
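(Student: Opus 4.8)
The plan is to derive every bound by the method of characteristics. Let $X(x,t)$ denote the flux of $v$, so that $\frac{d}{dt}X=v(X,t)$ and $X(x,t_0)=x$; since $v$ is smooth, $X(\cdot,t)$ is a diffeomorphism and $\sup_x|h(X(x,t),t)|=\|h(t)\|_0$ for every field $h$. The maximum principle \eqref{e:max_prin} is then immediate: integrating the identity $\frac{d}{dt}f(X(x,t),t)=g(X(x,t),t)$ from $t_0$ to $t$ and taking a supremum in $x$ gives the claim. For \eqref{e:trans_est_0} one differentiates \eqref{e:transport} in space, obtaining $D_t\partial_j f=\partial_j g-\partial_j v\cdot\nabla f$, so that along a characteristic $\bigl|\frac{d}{dt}\nabla f(X(x,t),t)\bigr|\le[g(t)]_1+[v(t)]_1\,|\nabla f(X(x,t),t)|$; Gr\"onwall's inequality and a supremum in $x$ yield \eqref{e:trans_est_0}. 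Here, as throughout, $[v]_1$ denotes the supremum over time, so that $\int_\tau^t[v(s)]_1\,ds\le(t-\tau)[v]_1$.

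Estimate \eqref{e:trans_est_1} I would prove by induction on $N$. Applying $D^\beta$ with $|\beta|=N$ to \eqref{e:transport} gives $D_tD^\beta f=D^\beta g-[D^\beta,v\cdot\nabla]f$, and expanding the commutator by the Leibniz rule produces a sum of terms $D^{\beta_1}v\cdot\nabla D^{\beta_2}f$ with $1\le|\beta_1|\le N$ and $|\beta_1|+|\beta_2|=N$. The two extreme terms contribute $[v]_1[f]_N$ and $[v]_N[f]_1$, whereas every intermediate product $[v]_j[f]_{N+1-j}$ with $2\le j\le N-1$ is absorbed into $C\bigl([v]_1[f]_N+[v]_N[f]_1\bigr)$ by the interpolation inequality \eqref{e:Holderinterpolation2}. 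Consequently $\frac{d}{dt}[f(t)]_N\le C[v]_1[f(t)]_N+C[v]_N[f(t)]_1+[g(t)]_N$ along characteristics, and Gr\"onwall's inequality bounds $[f(t)]_N$ by $[f_0]_N$ plus weighted time-integrals of $[v]_N[f(\tau)]_1$ and $[g(\tau)]_N$. Inserting the already proven bound \eqref{e:trans_est_0} for $[f(\tau)]_1$ into the former, the part coming from $[f_0]_1$ integrates to the term $C(t-t_0)[v]_N[f_0]_1$ and the part coming from $[g]_1$ to the term $[v]_N[g(\tau)]_1$ in the integrand, which is precisely \eqref{e:trans_est_1}.

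For the flow map, $\Phi$ solves $\partial_t\Phi+v\cdot\nabla\Phi=0$ with $\Phi(x,t_0)=x$, hence $A:=D\Phi-\Id$ satisfies the transport equation $D_tA=-(A+\Id)\,Dv$ with $A(\cdot,t_0)=0$. Along characteristics $\frac{d}{dt}A(X(x,t),t)=-(A+\Id)(X,t)\,Dv(X,t)$, so $y(t):=\|A(t)\|_0$ satisfies $y'\le[v]_1(y+1)$ with $y(t_0)=0$, giving $y(t)+1\le e^{(t-t_0)[v]_1}$, i.e. \eqref{e:Dphi_near_id}. For \eqref{e:Dphi_N} observe that $[\Phi(t)]_N=[A(t)]_{N-1}$ for $N\ge2$; applying the estimate \eqref{e:trans_est_1} just proved to the equation for $A$ with right-hand side $-(A+\Id)Dv$, and bounding $[(A+\Id)Dv]_{N-1}\le C\bigl([v]_1[A]_{N-1}+[v]_N\bigr)e^{C(t-t_0)[v]_1}$ by means of \eqref{e:Holderinterpolation2}, the inductive hypothesis on the lower seminorms of $A$, and \eqref{e:Dphi_near_id}, the resulting Gr\"onwall inequality with zero initial data yields $[A(t)]_{N-1}\le C(t-t_0)[v]_N\,e^{C(t-t_0)[v]_1}$.

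The only points requiring care are the combinatorial bookkeeping in the induction for \eqref{e:trans_est_1} — verifying that all intermediate commutator terms are genuinely controlled by $[v]_1[f]_N+[v]_N[f]_1$ and that the time integrations reproduce exactly the stated dependence on $[f_0]_1$ and $[g]_1$ — together with the mildly self-referential structure of the equation for $A=D\Phi-\Id$, which forces the Gr\"onwall argument there to be set up as an a priori inequality and closed inductively in $N$. Everything else is a routine application of the method of characteristics.
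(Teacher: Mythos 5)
Your argument is correct and, for \eqref{e:max_prin}, \eqref{e:trans_est_0} and \eqref{e:trans_est_1}, it is essentially the paper's proof: characteristics, differentiation of the equation, interpolation via \eqref{e:Holderinterpolation2} to reduce the intermediate commutator terms to $[v]_1[f]_N+[v]_N[f]_1$, and Gr\"onwall (the paper likewise leaves the final time-integration bookkeeping as ``elementary calculations''). The only real divergence is in the flow-map estimates: you work with $A=D\Phi-\Id$, which satisfies $D_tA=-(A+\Id)Dv$, and for \eqref{e:Dphi_N} you must then run a self-referential Gr\"onwall/induction because the ``forcing'' $-(A+\Id)Dv$ involves $A$ itself. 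The paper's route is lighter: for \eqref{e:Dphi_near_id} it applies \eqref{e:trans_est_0} to $\Psi=\Phi-x$, which solves the transport equation with the \emph{fixed} right-hand side $-v$ and $D\Psi(\cdot,t_0)=0$; and for \eqref{e:Dphi_N} it simply applies \eqref{e:trans_est_1} to $\Phi$ itself, which solves \eqref{e:transport} with $g=0$, $[\Phi(\cdot,t_0)]_N=0$ for $N\geq 2$ and $[\Phi(\cdot,t_0)]_1=1$, so that \eqref{e:Dphi_N} drops out in one line with no extra induction. Your version buys nothing over this and costs the additional a priori closure (plus a Leibniz/interpolation estimate on $[(A+\Id)Dv]_{N-1}$ beyond the literal range of \eqref{e:Holderproduct}), so it is worth knowing the shortcut; otherwise the two proofs coincide.
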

\begin{proof} 
We start with the following elementary observation for transport equations: if $f$ solves \eqref{e:transport}, then 
$\frac{d}{dt}f(X(t,x),t)=g(X(t,x), t)$ and consequently
$$
f(t,x)=f_0(\Phi(x,t))+\int_{t_0}^tg(X(\Phi(t,x), \tau), \tau)\,d\tau.
$$
The maximum principle \eqref{e:max_prin} follows immediately. 
Next, differentiate \eqref{e:transport} in $x$ to obtain the identity
$$
D_tDf=(\partial_t+v\cdot\nabla)Df=Dg-DfDv.
$$
Applying \eqref{e:max_prin} to $Df$ yields
$$
[f(t)]_1\leq [f_0]_1+\int_{t_0}^t\left([g(\tau)]_1+[v]_1[f(\tau)]_1\right)\,d\tau.
$$
An application of Gronwall's inequality then yields \eqref{e:trans_est_0}.

More generally, differentiating \eqref{e:transport} $N$ times yields
\begin{equation}\label{e:diff_transport}
\partial_t  D^N f  + (v\cdot \nabla) D^Nf = D^N g + \sum_{j=0}^{N-1} c_{j,N}D^{j+1} f : D^{N-j} v
\end{equation}
(where $:$ is a shorthand notation for sums of products of entries of the corresponding tensors). 

Also, using \eqref{e:max_prin} and the interpolation inequality \eqref{e:Holderinterpolation2} we can estimate
$$
[f(t)]_N\leq [f_0]_N+\int_{t_0}^t\left([g(\tau)]_N+C\bigl([v]_N[f(\tau)]_1+[v]_1[f(\tau)]_N\bigr)\right)\,d\tau.
$$
Plugging now the estimate \eqref{e:trans_est_0}, Gronwall's inequality leads -- after some elementary calculations -- to \eqref{e:trans_est_1}.

\smallskip

The estimate \eqref{e:Dphi_N} follows easily from \eqref{e:trans_est_1} observing that
$\Phi$ solves \eqref{e:transport} with $g=0$ and $D^2 \Phi (\cdot, t_0) = 0$. 
Consider next $\Psi (x,t) = \Phi (x,t) -x$ and 
observe first that $\partial_t \Psi + v\cdot \nabla \Psi = -v$. Since $D \Psi (\cdot, t_0)=0$,
we apply \eqref{e:trans_est_0} to conclude 
\[
[\Psi (t)]_1 \leq \int_{t_0}^t e^{(t-\tau)[v]_1} [v]_1 d\tau = e^{(t-t_0) [v]_1} -1\, . 
\]
Since $D\Psi (x,t) = D\Phi (x,t) - {\rm Id}$, \eqref{e:Dphi_near_id} follows.
\end{proof}

\section{Constantin-E-Titi commutator estimate}\label{s:CET}

Finally, we recall the quadratic commutator estimate from \cite{ConstantinETiti} 
(cf. also with \cite[Lemma 1]{CDSz}):

\begin{proposition}\label{p:CET}
Let $f,g\in C^{\infty}(\T^3\times\T)$ and $\psi$ the mollifier of Section \ref{s:perturbations}. For any $r\geq 0$ we have the estimate
\[
\Bigl\|(f*\chi_\ell)( g*\chi_\ell)-(fg)*\chi_\ell\Bigr\|_r\leq C\ell^{2-r}\|f\|_1\|g\|_1\, ,
\]
where the constant $C$ depends only on $r$.
\end{proposition}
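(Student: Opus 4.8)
The plan is to base everything on the classical pointwise commutator identity and an integration-by-parts argument, reducing the general $r$ to integer $r$ by interpolation. Fix $t$ (which enters only as an inert parameter and is suppressed), write $\psi_\ell(y)=\ell^{-3}\psi(y/\ell)$ and $h_\ell:=h*\psi_\ell$ (convolution in $x$ only). The first step is to record the identity
\[
\bigl(f_\ell g_\ell-(fg)_\ell\bigr)(x)=-\tfrac12\iint\psi_\ell(y)\,\psi_\ell(z)\,\bigl(f(x-y)-f(x-z)\bigr)\bigl(g(x-y)-g(x-z)\bigr)\,dy\,dz,
\]
which one checks by expanding the product on the right into four terms and using $\iint\psi_\ell(y)\psi_\ell(z)h_1(x-y)h_2(x-z)\,dy\,dz=(h_1)_\ell(x)(h_2)_\ell(x)$, $\iint\psi_\ell(y)\psi_\ell(z)h_1(x-y)h_2(x-y)\,dy\,dz=(h_1h_2)_\ell(x)$, and $\int\psi_\ell=1$.

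With this identity the case $r=0$ is immediate: since $\supp\psi$ is compact, on the region of integration $|y|,|z|\le C\ell$, hence $|f(x-y)-f(x-z)|\le|y-z|\,\|f\|_1\le C\ell\,\|f\|_1$ and likewise for $g$, while $\iint|\psi_\ell(y)\psi_\ell(z)|\,dy\,dz$ is bounded; this gives $\|f_\ell g_\ell-(fg)_\ell\|_0\le C\ell^2\|f\|_1\|g\|_1$. For an integer $r=m\ge1$ I would differentiate the identity $m$ times in $x$ under the integral and expand by the Leibniz rule. In every term in which each of the two difference factors receives at most one derivative, that factor is bounded by $C\|f\|_1$, or by $C\ell\,\|f\|_1$ if it is undifferentiated (resp. $g$); counting the undifferentiated factors produces the missing powers of $\ell$, so these terms are $\le C\ell^{2-m}\|f\|_1\|g\|_1$. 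The terms in which a difference factor absorbs two or more $x$-derivatives — which naively would involve $\|f\|_2$ or $\|g\|_2$ — are treated by using $\partial_{x_i}[f(x-y)]=-\partial_{y_i}[f(x-y)]$ (and the analogue with $z$) to convert the excess $x$-derivatives into $y$- or $z$-derivatives, then integrating by parts to move them onto the smooth kernels $\psi_\ell(y),\psi_\ell(z)$; since $\|D^\alpha\psi_\ell\|_{L^1}\le C\ell^{-|\alpha|}$, each transfer costs exactly one factor $\ell^{-1}$ and nothing more, so the total is again $\le C\ell^{2-m}\|f\|_1\|g\|_1$. Finally, for non-integer $r$ the estimate follows by interpolating via \eqref{e:Holderinterpolation2} between the two neighbouring integer values, which reproduces exactly the exponent $\ell^{2-r}$.

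The main difficulty lies in this last mechanism: a straightforward Leibniz expansion of $D^m$ of the commutator produces spatial derivatives of $f$ and $g$ of arbitrarily high order, whereas the claimed bound allows only their $C^1$ norms. The crucial point — and the reason the commutator structure is indispensable — is that such high derivatives always appear paired against a kernel at scale $\ell$, so they can be re-expressed by integration by parts at the sole cost of powers of $\ell^{-1}$ that are already built into $\ell^{2-r}$; equivalently, a Littlewood--Paley decomposition shows that $D^\alpha f$ is controlled in the negative H\"older space $C^{1-|\alpha|}$ by $\|f\|_1$, precisely the regularity the commutator estimate needs at order $|\alpha|$. Keeping track of which factors retain their (\,$\ell$-gaining\,) difference structure through the integrations by parts is the only genuinely technical part of the argument.
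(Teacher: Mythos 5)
The paper itself does not prove Proposition \ref{p:CET}; it simply cites \cite{ConstantinETiti} and \cite[Lemma 1]{CDSz}, so your attempt has to be measured against the standard argument. Your symmetrized identity is correct, it gives the case $r=0$ immediately, and the reduction of non-integer $r$ to integer $r$ via \eqref{e:Holderinterpolation2} does reproduce the exponent $\ell^{2-r}$ exactly. The genuine gap is in the integration-by-parts step for integer $m\geq 3$. When you convert an excess $x$-derivative acting on $f(x-y)$ into $-\partial_y$ and integrate by parts, the $y$-derivative does \emph{not} land only on the kernel $\psi_\ell(y)$: it also hits the $y$-dependent half of the other difference factor. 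As soon as that factor already carries one derivative from the Leibniz split (which is unavoidable once $m\geq 3$, e.g.\ $m=3$ with two derivatives on the $f$-difference and one on the $g$-difference), you generate terms of the form $\int\psi_\ell(y)\,(Df)(x-y)\,(D^2g)(x-y)\,dy$, which are of size $\|f\|_1\|g\|_2$ and are \emph{not} individually bounded by $\ell^{2-m}\|f\|_1\|g\|_1$; they disappear only through cancellations that your bookkeeping neither exhibits nor controls. Hence the assertion that each transfer ``costs exactly one factor $\ell^{-1}$ and nothing more'' is false as stated, and as written your argument proves the proposition only for $r\leq 2$ (the Littlewood--Paley remark at the end is a heuristic, not a substitute).

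The fix is small but essential: do not apply Leibniz to the two difference factors at all, and never let a derivative fall on $f$ or $g$. Substituting $u=x-y$, $v=x-z$ in your identity puts all the $x$-dependence into the kernels, so that for every integer $m$
\begin{equation*}
D^m\bigl(f_\ell g_\ell-(fg)_\ell\bigr)(x)=-\tfrac12\sum_{j=0}^m\binom{m}{j}\iint D^j\psi_\ell(y)\,D^{m-j}\psi_\ell(z)\,\bigl(f(x-y)-f(x-z)\bigr)\bigl(g(x-y)-g(x-z)\bigr)\,dy\,dz\,,
\end{equation*}
and now your $r=0$ argument applies verbatim to each summand: on the support $|y|,|z|\leq C\ell$ the two differences give $C\ell^2\|f\|_1\|g\|_1$, while $\|D^j\psi_\ell\|_{L^1}\leq C\ell^{-j}$ and $\|D^{m-j}\psi_\ell\|_{L^1}\leq C\ell^{-(m-j)}$ give the factor $\ell^{-m}$, so that $\|f_\ell g_\ell-(fg)_\ell\|_m\leq C\ell^{2-m}\|f\|_1\|g\|_1$ with no derivatives of $f$ or $g$ beyond the first ever appearing. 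Your interpolation step then yields all real $r\geq 0$; this corrected version is essentially the argument behind the reference quoted in the paper.
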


\section{Schauder Estimates}
\label{s:schauder}

We recall here the following consequences of the classical Schauder estimates  (cf.\ 
\cite[Proposition 5.1]{DS3}).

\begin{proposition}\label{p:GT}
For any $\alpha\in (0,1)$ and any $m\in \N$ there exists a constant $C (\alpha, m)$ with the following properties.
If $\phi, \psi: \T^3\to \R$ are the unique solutions of
\[
\left\{\begin{array}{l}
\Delta \phi = f\\ \\
\fint \phi =0
\end{array}\right.
\qquad\qquad 
\left\{\begin{array}{l}
\Delta \psi = {\rm div}\, F\\ \\
\fint \psi =0
\end{array}\right. \, ,
\]
then
\begin{equation}\label{e:GT_laplace}
\|\phi\|_{m+2+\alpha} \leq C (m, \alpha) \|f\|_{m, \alpha}
\quad\mbox{and}\quad \|\psi\|_{m+1+\alpha} \leq C (m, \alpha) \|F\|_{m, \alpha}\, .
\end{equation}
Moreover we have the estimates
\begin{eqnarray}
&&\|\mathcal{R} v\|_{m+1+\alpha} \leq C (m,\alpha) \|v\|_{m+\alpha}\label{e:Schauder_R}\\
&&\|\mathcal{R} ({\rm div}\, A)\|_{m+\alpha}\leq C(m,\alpha) \|A\|_{m+\alpha}\label{e:Schauder_Rdiv}
\end{eqnarray}
\end{proposition}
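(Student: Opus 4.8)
The plan is to reduce everything to the classical interior Schauder estimate for Poisson's equation on the torus: if $\Delta \phi = f$ on $\T^3$ with $\fint_{\T^3}\phi = 0$ and $f \in C^{m+\alpha}$, then $\phi \in C^{m+2+\alpha}$ and $[\phi]_{m+2+\alpha} \le C(m,\alpha)[f]_{m+\alpha}$ (this is the periodic version of the standard estimate, see e.g.\ Gilbarg--Trudinger; here and below $\|\cdot\|_{m,\alpha}$ and $\|\cdot\|_{m+\alpha}$ denote the same norm). Granting this, the first inequality in \eqref{e:GT_laplace} follows once we pass from the seminorm bound to a bound on the full norm, and this is exactly where the normalization $\fint_{\T^3}\phi = 0$ is used: Poincaré's inequality gives $\|\phi\|_0 \le C\|\nabla\phi\|_0 = C[\phi]_1$, and then the interpolation inequality \eqref{e:Holderinterpolation2} bounds every intermediate seminorm $[\phi]_j$, $1 \le j \le m+2$, between $\|\phi\|_0$ and $[\phi]_{m+2+\alpha}$, so that $\|\phi\|_{m+2+\alpha} \le C[\phi]_{m+2+\alpha} \le C\|f\|_{m+\alpha}$.

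First I would treat the divergence-form estimate for $\psi$, the key point being not to lose a derivative. Let $\phi_i$ solve $\Delta\phi_i = F_i$ (after subtracting the mean of $F_i$, which does not change $\div F$) with $\fint_{\T^3}\phi_i = 0$. Then $\sum_i\partial_i\phi_i$ solves $\Delta(\sum_i\partial_i\phi_i) = \div F$ and has zero average, hence equals $\psi$ by uniqueness; applying the first part to each $\phi_i$ yields $\|\psi\|_{m+1+\alpha} \le \sum_i\|\partial_i\phi_i\|_{m+1+\alpha} \le \sum_i\|\phi_i\|_{m+2+\alpha} \le C\|F\|_{m+\alpha}$, which is the second inequality in \eqref{e:GT_laplace}. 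Equivalently, $\Delta^{-1}\div$ gains one derivative on Hölder spaces; since $\P w = w - \nabla\Delta^{-1}\div w$, the operator $\nabla\otimes\nabla\,\Delta^{-1}$ is bounded on every $C^{k+\alpha}$, $k \ge 0$, and therefore $\|\nabla\P w\|_{k+\alpha} \le C\big(\|\nabla w\|_{k+\alpha} + \|\div w\|_{k+\alpha}\big)$.

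Next I would derive the two estimates for $\RR$ directly from the explicit formula in Definition \ref{d:reyn_op}. For \eqref{e:Schauder_R}, the auxiliary field $u$ solves $\Delta u = v - \fint_{\T^3}v$ componentwise, and since $\|v - \fint_{\T^3}v\|_{m+\alpha} \le 2\|v\|_{m+\alpha}$ the first part gives $\|u\|_{m+2+\alpha} \le C\|v\|_{m+\alpha}$; each term entering $\RR v$ --- namely $\nabla u$, $(\div u)\Id$ and $\nabla\P u = \nabla u - \nabla\otimes\nabla\,\Delta^{-1}\div u$ --- is then bounded in $C^{m+1+\alpha}$ by $C\|u\|_{m+2+\alpha} \le C\|v\|_{m+\alpha}$, using the boundedness of $\nabla\otimes\nabla\,\Delta^{-1}$. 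For \eqref{e:Schauder_Rdiv} one repeats this with $v = \div A$: now $u$ solves $\Delta u = \div A$ (the mean again vanishes), so the divergence-form estimate gives $\|u\|_{m+1+\alpha} \le C\|A\|_{m+\alpha}$, whence $\nabla u$, $\div u$ and $\nabla\otimes\nabla\,\Delta^{-1}\div u$ are all bounded in $C^{m+\alpha}$ by $C\|A\|_{m+\alpha}$, giving $\|\RR(\div A)\|_{m+\alpha} \le C\|A\|_{m+\alpha}$.

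The only genuinely analytic input is the classical interior Schauder estimate quoted at the start; the rest is organizational. The point that needs a little care is not to lose a derivative when the data appear in divergence form, which is handled uniformly by the identity $\Delta^{-1}\div g = \sum_i\partial_i\,\Delta^{-1}g_i$ --- the outer divergence is traded for a derivative landing on a function that is two orders smoother. A minor secondary point is the passage from seminorm to full-norm estimates, which uses the zero-average normalization via Poincaré's inequality and interpolation.
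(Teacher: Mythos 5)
Your argument is correct, and it is essentially the proof the paper has in mind: the paper gives no argument of its own here, simply citing the classical Schauder estimates (via \cite[Proposition 5.1]{DS3}), and your reduction --- periodic Schauder for $\Delta$, the identity $\Delta^{-1}\div = \sum_i\partial_i\Delta^{-1}$ to avoid losing a derivative, and boundedness of $\nabla\otimes\nabla\Delta^{-1}$ applied to the explicit formula for $\mathcal{R}$ in Definition \ref{d:reyn_op} --- is exactly that standard route, with the zero-mean normalization correctly used to pass from seminorms to full norms.
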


\section{Stationary phase lemma}\label{s:stationary}

We recall here the following simple facts (for a proof we refer to \cite{DS3})

\begin{proposition}\label{p:stat_phase}
(i) Let $k\in\Z^3\setminus\{0\}$ and $\lambda\geq 1$ be fixed. 
For any $a\in C^{\infty}(\T^3)$ and $m\in\N$ we have
\begin{equation}\label{e:average}
\left|\int_{\T^3}a(x)e^{i\lambda k\cdot x}\,dx\right|\leq \frac{[a]_m}{\lambda^m}.
\end{equation}

(ii) Let $k\in\Z^3\setminus\{0\}$ be fixed. For a smooth vector field $a\in C^{\infty}(\T^3;\R^3)$ let 
$F(x):=a(x)e^{i\lambda k\cdot x}$. Then we have
$$
\|\RR(F)\|_{\alpha}\leq \frac{C}{\lambda^{1-\alpha}}\|a\|_0+\frac{C}{\lambda^{m-\alpha}}[a]_m+\frac{C}{\lambda^m}[a]_{m+\alpha},
$$
where $C=C(\alpha,m)$.
\end{proposition}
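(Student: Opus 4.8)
Both statements reduce to repeated integration by parts against the phase $e^{i\lambda k\cdot x}$, after which only the Schauder estimates of Proposition~\ref{p:GT} and the elementary H\"older inequalities \eqref{e:Holderinterpolation}--\eqref{e:Holderproduct} are needed. For part~(i) I would argue directly: since $k\in\Z^3\setminus\{0\}$ there is a coordinate $j$ with $|k_j|\ge1$; writing $e^{i\lambda k\cdot x}=(i\lambda k_j)^{-1}\partial_{x_j}e^{i\lambda k\cdot x}$ and integrating by parts $m$ times in $x_j$ (no boundary terms on $\T^3$) gives
\begin{equation*}
\int_{\T^3}a\,e^{i\lambda k\cdot x}\,dx=\frac{(-1)^m}{(i\lambda k_j)^m}\int_{\T^3}(\partial_{x_j}^m a)\,e^{i\lambda k\cdot x}\,dx\,,
\end{equation*}
and taking absolute values, using $|k_j|\ge1$, $|e^{i\lambda k\cdot x}|\equiv1$ and $\|\partial_{x_j}^m a\|_0\le[a]_m$, yields the claimed bound.

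For part~(ii) the plan is to construct an explicit approximate antidivergence of $F$ that gains a factor $\lambda^{-1}$ at the cost of one spatial derivative, and to iterate it $m$ times. For a vector field $G=b\,e^{i\lambda k\cdot x}$ set $\mathcal S[G]:=\tfrac{1}{i\lambda|k|^2}(b\otimes k)\,e^{i\lambda k\cdot x}$; since $k\cdot k=|k|^2$, a direct computation gives
\begin{equation*}
\div\mathcal S[G]=b\,e^{i\lambda k\cdot x}+\tfrac{1}{i\lambda|k|^2}(k\cdot\nabla b)\,e^{i\lambda k\cdot x}=:G+\mathcal E[G]\,,
\end{equation*}
where $\mathcal E[G]$ is again of the same form, with amplitude $\tfrac{k\cdot\nabla b}{i\lambda|k|^2}$. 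Setting $E_0:=F$, $E_j:=\mathcal E[E_{j-1}]$ and $S_j:=\mathcal S[E_{j-1}]$, iteration of this identity ($E_{j-1}=\div S_j-E_j$) produces
\begin{equation*}
F=\div\Bigl(\textstyle\sum_{j=1}^{m}(-1)^{j-1}S_j\Bigr)+(-1)^m E_m\,,
\end{equation*}
the amplitude of $E_j$ being $\tfrac{(k\cdot\nabla)^j a}{(i\lambda|k|^2)^j}$ and that of $S_j$ being $\tfrac{(k\cdot\nabla)^{j-1}a\otimes k}{(i\lambda|k|^2)^j}$. The factor $|k|^{-2}$ produced at each step, together with $|k|\ge1$, makes all powers of $|k|$ appearing below nonpositive, so the final constant depends only on $\alpha$ and $m$.

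Applying the linear operator $\RR$, Proposition~\ref{p:GT} gives $\|\RR(\div S_j)\|_\alpha\le C\|S_j\|_\alpha$ (by \eqref{e:Schauder_Rdiv}) and $\|\RR(E_m)\|_\alpha\le\|\RR(E_m)\|_{1+\alpha}\le C\|E_m\|_\alpha$ (by \eqref{e:Schauder_R}), so it remains to bound $\|S_j\|_\alpha$ for $1\le j\le m$ and $\|E_m\|_\alpha$. Using $|k|\ge1$, the product rule \eqref{e:Holderproduct}, and $[e^{i\lambda k\cdot x}]_\alpha\le C(\lambda|k|)^\alpha$, one gets for $j\ge1$
\begin{equation*}
\|E_j\|_\alpha\le C\bigl(\lambda^{\alpha-j}[a]_j+\lambda^{-j}[a]_{j+\alpha}\bigr)\,,\qquad
\|S_j\|_\alpha\le C\bigl(\lambda^{\alpha-j}[a]_{j-1}+\lambda^{-j}[a]_{j-1+\alpha}\bigr)\,.
\end{equation*}
Choosing $j=m$ in the first estimate yields exactly the last two terms of the asserted bound, and the $j=1$ case of the second yields $C\lambda^{\alpha-1}\|a\|_0$ (the first term) together with $C\lambda^{-1}[a]_\alpha$.

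It then remains to absorb all intermediate quantities $\lambda^{\alpha-j}[a]_{j-1}$ and $\lambda^{-j}[a]_{j-1+\alpha}$ (for $1\le j\le m$) into the three terms on the right-hand side. This I would do by interpolating via \eqref{e:Holderinterpolation2}, namely $[a]_{j-1}\le C\|a\|_0^{1-\theta}[a]_m^{\theta}$ with $\theta=\tfrac{j-1}{m}$ and $[a]_{j-1+\alpha}\le C\|a\|_0^{1-\sigma}[a]_{m+\alpha}^{\sigma}$ with $\sigma=\tfrac{j-1+\alpha}{m+\alpha}$, and then applying Young's inequality; the resulting exponents of $\lambda$ come out nonpositive precisely because $\tfrac{m-1}{m}\le1$ and $\tfrac{m-1+\alpha}{m+\alpha}\le1$. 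I expect this last bookkeeping step — keeping the iteration indices, the powers of $|k|$, and the interpolation exponents all straight — to be the only genuine obstacle; part~(i) and the algebra defining $\mathcal S$ are routine.
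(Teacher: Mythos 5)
Your argument is correct, and it is worth noting that this paper does not actually prove Proposition \ref{p:stat_phase} at all: it defers to \cite{DS3}. The proof there is in the same spirit as yours -- repeated integration by parts against the oscillatory phase to trade powers of $\lambda^{-1}$ for derivatives of $a$, with the remainder controlled by Schauder estimates -- but it is implemented at the level of the Poisson equation defining $\RR$ (an expansion approximating $\Delta^{-1}(a e^{i\lambda k\cdot x})$), whereas you build an explicit approximate anti-divergence $\mathcal S$ directly and then invoke \eqref{e:Schauder_R} and \eqref{e:Schauder_Rdiv} as black boxes on the remainder and on each $S_j$. Your identity $\div\mathcal S[G]=G+\mathcal E[G]$ is correct (with the row-wise divergence convention $(\div M)_i=\partial_j M_{ij}$), the bounds $\|E_j\|_\alpha\le C(\lambda^{\alpha-j}[a]_j+\lambda^{-j}[a]_{j+\alpha})$ and $\|S_j\|_\alpha\le C(\lambda^{\alpha-j}[a]_{j-1}+\lambda^{-j}[a]_{j-1+\alpha})$ follow as you say since $|k|\ge 1$, and the absorption step does close: with $\theta=\tfrac{j-1}{m}$ one checks $\alpha-j\le(\alpha-1)(1-\theta)+(\alpha-m)\theta$, and with $\sigma=\tfrac{j-1+\alpha}{m+\alpha}$ one checks $-j\le(\alpha-1)(1-\sigma)-m\sigma$, so \eqref{e:Holderinterpolation2} plus weighted Young leaves only the three stated terms. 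What your route buys is independence from the explicit potential-theoretic structure of $\RR$ (any right inverse of $\div$ satisfying the two Schauder bounds would do), at the price of the extra interpolation bookkeeping.

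One cosmetic caveat: in part (i) the constant-free inequality \eqref{e:average} as written with $\int_{\T^3}$ cannot hold literally (test $a(x)=\cos(k\cdot x)$, $m=0$, $\lambda=1$); the statement in \cite{DS3} uses the normalized integral $\fint_{\T^3}$, and your integration by parts yields exactly that version, or equivalently the stated one with an extra factor $|\T^3|=(2\pi)^3$, which is harmless everywhere the proposition is used.
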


\section{One further commutator estimate}\label{s:commutator}

\begin{proposition}\label{p:commutator}
Let $k\in\Z^3\setminus\{0\}$ be fixed. For any smooth vector field $a\in C^\infty  (\T^3;\R^3)$ and any smooth function $b$, if we set $F(x):=a(x)e^{i\lambda k\cdot x}$, we then have
\begin{align}
&\|[b, \mathcal{R}] (F)\|_\alpha \leq  \frac{C}{\lambda^{2-\alpha}}\|a\|_0 \|b\|_1
+\frac{C}{\lambda^{m-\alpha}}(\|a\|_m\|b\|_2 + \|a\|_2\|b\|_m)\nonumber\\ 
&+\frac{C}{\lambda^{m}}(\|a\|_{m+\alpha} \|b\|_{2+\alpha} + \|a\|_{2+\alpha} \|b\|_{m+\alpha}),
\end{align}
where $C=C(\alpha,m)$.
\end{proposition}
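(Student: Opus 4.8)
The plan is to represent the operator $\mathcal{R}$, restricted to functions of the form $a(x)e^{i\lambda k\cdot x}$, via an explicit Fourier-multiplier symbol and then to Taylor-expand that symbol around the frequency $\lambda k$, exactly as in the proof of the stationary phase estimate Proposition \ref{p:stat_phase}(ii). Concretely, from Definition \ref{d:reyn_op} one sees that $\mathcal{R}$ is a sum of compositions of $(-\Delta)^{-1}$, first-order derivatives, and the Leray projection; applied to a single Fourier mode $e^{i\xi\cdot x}$ it acts as multiplication by a matrix symbol $\widehat{\mathcal{R}}(\xi)$ which is homogeneous of degree $-1$ and smooth away from $\xi=0$. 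Writing $a=\sum_j \hat a_j e^{ij\cdot x}$, we have $\mathcal{R}(ae^{i\lambda k\cdot x})=\sum_j \hat a_j\,\widehat{\mathcal{R}}(j+\lambda k)\,e^{i(j+\lambda k)\cdot x}$, so the commutator with multiplication by $b$ becomes, after also expanding $b=\sum_{j'}\hat b_{j'}e^{ij'\cdot x}$,
\[
[b,\mathcal{R}](F)=\sum_{j,j'}\hat b_{j'}\hat a_j\Bigl(\widehat{\mathcal{R}}(j+\lambda k)-\widehat{\mathcal{R}}(j+j'+\lambda k)\Bigr)e^{i(j+j'+\lambda k)\cdot x}.
\]
The key point is the cancellation: the difference of the two symbol values is $O(|j'|/\lambda^2)$ for $|j|,|j'|\lesssim$ small compared to $\lambda$, which is one power of $\lambda$ better than each individual term; this is what produces the gain $\lambda^{-2}$ instead of $\lambda^{-1}$ in the leading term.

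To turn this into the stated estimate I would not work on the Fourier side directly but instead mimic the (presumably standard, from \cite{DS3}) proof of Proposition \ref{p:stat_phase}(ii): introduce the kernel representation $\mathcal{R}(ae^{i\lambda k\cdot x})(x)=e^{i\lambda k\cdot x}\int K_{\lambda k}(x-y)a(y)\,dy$ where $K_{\lambda k}(z)=\sum_{\xi}\widehat{\mathcal{R}}(\xi+\lambda k)e^{i\xi\cdot z}$, and exploit that, by the degree $-1$ homogeneity and smoothness of $\widehat{\mathcal{R}}$ away from the origin, each derivative $\partial_\xi$ of the symbol gains a power of $\lambda$. Then
\[
[b,\mathcal{R}](F)(x)=e^{i\lambda k\cdot x}\int K_{\lambda k}(x-y)\bigl(b(y)-b(x)\bigr)a(y)\,dy,
\]
and writing $b(y)-b(x)=(y-x)\cdot\int_0^1 \nabla b(x+s(y-x))\,ds$ absorbs one factor $(y-x)$, which against the kernel is equivalent to differentiating the symbol once, hence the extra $\lambda^{-1}$. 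The $\|b\|_1\|a\|_0$ term comes from this leading contribution; the $\|a\|_m\|b\|_2$, $\|a\|_2\|b\|_m$ and the Hölder-index terms arise from the remainder of the symbol Taylor expansion, exactly as the analogous terms $[a]_m/\lambda^{m-\alpha}$ and $[a]_{m+\alpha}/\lambda^m$ appear in Proposition \ref{p:stat_phase}(ii), now with the extra derivative distributed between $a$ and $b$ by the Leibniz rule (which is why $b$ enters at orders $2$, $m$ and $2+\alpha$ rather than $1$, $m-1$ and $1+\alpha$: one derivative of the product has already been spent on the commutator gain). The Schauder/Calderón–Zygmund estimates of Proposition \ref{p:GT} supply the $C^\alpha\to C^\alpha$ boundedness needed to control the singular-integral remainder terms uniformly in $\lambda$.

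The main obstacle I expect is bookkeeping the Taylor remainder of the matrix symbol $\widehat{\mathcal{R}}(\xi+\lambda k)$ in a way that cleanly separates the regularity demanded of $a$ from that demanded of $b$ and yields exactly the three groups of terms claimed, while keeping all constants independent of $\lambda$. In particular one must be careful that the commutator structure genuinely buys a full extra power of $\lambda$ in the principal term (this relies on $b(x)-b(x)=0$ killing the order-zero term of the expansion of $b$ around $x$), and that the lower-order remainders — where derivatives may fall on $a$, on $b$, or on the symbol — are each bounded using interpolation \eqref{e:Holderinterpolation2} and the product rule \eqref{e:Holderproduct} by the asserted combinations. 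Since this is a purely local, $\lambda$-independent pseudodifferential computation of a by-now routine type (it is the natural commutator refinement of Proposition \ref{p:stat_phase}(ii)), I would present it compactly, referring to \cite{DS3} for the underlying kernel estimates and only spelling out the one new ingredient, namely the gain from $b(y)-b(x)$.
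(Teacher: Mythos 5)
Your plan is viable and it isolates the right mechanism --- the zeroth--order term of $b$ cancels in the commutator, so one pays a symbol derivative and gains a factor $\lambda^{-1}$ over Proposition \ref{p:stat_phase}(ii) --- but it is a genuinely different route from the paper's. You argue at the level of the Fourier symbol/kernel of $\mathcal{R}$, Taylor--expanding $\widehat{\mathcal{R}}(\cdot+\lambda k)$ and inserting $b(y)-b(x)$, in the style of the \cite{DS3} proof of the stationary phase lemma. The paper never touches the kernel of $\mathcal{R}$: it first proves the algebraic identity \eqref{e:almost_inverse} via the operator $\mathcal{S}(v)=\nabla v+(\nabla v)^t-\tfrac23(\div v)\Id$, then writes down an explicit \emph{local} approximate anti--divergence $\mathscr{S}(ae^{i\lambda k\cdot x})$ with a visible $\lambda^{-2}$ prefactor, so that its commutator with $b$ is exactly $\lambda^{-2}aA(b)e^{i\lambda k\cdot x}$ (see \eqref{e:operator_A}) and the error $ae^{i\lambda k\cdot x}-\div\mathscr{S}(ae^{i\lambda k\cdot x})$ involves only constant--coefficient operators $B_1,B_2$ on $a$ with factors $\lambda^{-1},\lambda^{-2}$; iterating this $N$ times reduces everything to the black boxes Proposition \ref{p:stat_phase}(ii) and Proposition \ref{p:GT}. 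The paper's route buys freedom from any new kernel analysis, at the cost of an iteration and crude norm combinations; your route is more conceptual and one--pass, but you must then prove the $C^\alpha$ kernel bounds uniformly in $\lambda$ yourself. One step of yours is too quick: after $b(y)-b(x)=(y-x)\cdot\int_0^1\nabla b(x+s(y-x))\,ds$, the factor $\nabla b(x+s(y-x))$ depends on both variables, so the expression is not yet a function of $x$ times a convolution acting on $a$; a further expansion of $b$ at $x$ (the source of the $\|b\|_2,\|b\|_m,\|b\|_{m+\alpha}$ terms) and a direct estimate of the H\"older seminorm of the resulting two--variable oscillatory integrals are still required --- fillable, but that is where the real work lies.
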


\begin{proof} {\bf Step 1} First of all, given a vector field $v$ define the operator 
\[
\mathcal{S} (v) := \nabla v + (\nabla v)^t - \frac{2}{3} (\div v) \Id\, .
\]
First observe that
\begin{equation}\label{e:kernel}
\div \mathcal{S} (v) = 0 \qquad \iff \qquad v \equiv \mbox{ const.}
\end{equation}
One implication is obvious. Next, assume $\div \mathcal{S} (v) = 0$. This is equivalent to the equations
\begin{equation}\label{e:div_S=0}
\Delta v_j+\frac13 \partial_j \div v = 0
\end{equation}
Differentiating and summing in $j$ we then conclude
\[
\frac{4}{3} \Delta \div v = 0\, .
\]
Thus $\div v$ must be constant and, since any divergence has average zero, we conclude that $\div v =0$. Thus
\eqref{e:div_S=0} implies that $\Delta v_i =0$ for every $i$, which in turn gives the desired conclusion.

From this observation we conclude the identity
\begin{equation}\label{e:almost_inverse}
\mathcal{R} (v) = \mathcal{S} (v) + \mathcal{R} (v- \div \mathcal{S} (v))\, \qquad \mbox{for all $v\in C^\infty (\T^3, \R^3)$.}
\end{equation}
Indeed, observe first that $\mathcal{R} (v) = \mathcal{S} (w)$, where $w =\frac14 \mathcal{P} (v) +\frac34 v$. Thus, applying the argument above, since both sides of \eqref{e:almost_inverse} have zero averages, it suffices to show that they have the same divergence. But since $\div \mathcal{R} (v) = v - \fint v$, applying the divergence we obtain
\[
v - \fint v = \div \mathcal{S} (v) + v - \fint v - \div \mathcal{S} (v)\, ,
\]
which is obviously true.

\medskip

{\bf Step 2} Next, for $a\in C^\infty (\T^3, \R^3)$, $k\in \Z^3\setminus \{0\}$ and $\lambda \in \N\setminus \{0\}$, consider 
\[
\mathscr{S} (a e^{i\lambda k\cdot x}) := -\mathcal{S} \left( \frac{3}{4} \frac{a}{\lambda^2 |k|^2} e^{i\lambda k\cdot x} + \frac{1}{4\lambda^2|k|^2} \left(a - \frac{(a\cdot k) k}{|k|^2}\right) e^{i\lambda k\cdot x} \right)\, .
\]
Observe that
\begin{equation}\label{e:operator_A}
\mathscr{S} (b a e^{i\lambda k\cdot x}) - b \mathscr{S} (a e^{i\lambda k\cdot x}) = \frac{a A(b)}{\lambda^2} e^{i\lambda k\cdot x}\, ,
\end{equation}
where $A$ is an homogeneous differential operator of order one with constant coefficients 
(all depending only on $k$). Moreover,
\[
a e^{i\lambda k\cdot x} - \div \mathscr{S}  (a e^{i\lambda k\cdot x}) =
\frac{B_1 (a)}{\lambda} e^{i\lambda k\cdot x} + \frac{B_2 (a)}{\lambda^2} e^{i\lambda k\cdot x}\, ,
\]
where $B_1$ and $B_2$ are homogeneous differential operators of order $1$ and $2$ (respectively) with constant coefficients (again all depending only on $k$).

We use then the identity \eqref{e:almost_inverse} to write
\begin{align}
&-[b, \mathcal{R}] (F) =\RR (bF)- b \RR (F) =  \mathscr{S} (b a e^{i\lambda k\cdot x}) - b \mathscr{S} (a e^{i\lambda k\cdot x})\nonumber\\
&\quad + \mathcal{R} \left( bF - \div \mathscr{S}  (b a e^{i\lambda k\cdot x})\right) - b \mathcal{R} \left( F - \div \mathscr{S}  (a e^{i\lambda k\cdot x})\right)\nonumber\\
= & \frac{a A(b)}{\lambda^2} e^{i\lambda k\cdot x} + \mathcal{R} \left(\frac{B_1 (ab)}{\lambda} e^{i\lambda k\cdot x} + \frac{B_2 (ab)}{\lambda^2} e^{i\lambda k\cdot x}\right)\nonumber\\
&\quad - b \mathcal{R} \left(\frac{B_1 (a)}{\lambda} e^{i\lambda k\cdot x} + \frac{B_2 (a)}{\lambda^2} e^{i\lambda k\cdot x}\right) \, .
\end{align}
Using the Leibniz rule we can write $B_1 (ab) = B_1 (a) b + a B_1 (b)$ and $B_2 (ab) = B_2 (a) b+ a B_2 (b) + C_1 (a) C_1 (b)$, where $C_1$ is an homogeneous operator of order $1$. We can then reorder all terms to write
\begin{align}
-[b, \mathcal{R}] (F) &=  \frac{a A(b)}{\lambda^2} e^{i\lambda k\cdot x}\nonumber\\
&\quad+ \mathcal{R} \left(\frac{a B_1 (b)}{\lambda} e^{i\lambda k\cdot x}\right) + \RR\left(\frac{a B_2 (b) + C_1 (a) C_1 (b)}{\lambda^2} e^{i\lambda k\cdot x}\right)\nonumber\\
&\quad - \frac{1}{\lambda} [b, \RR]  \left( (B_1 (a)) e^{i\lambda k\cdot x}\right) 
- \frac{1}{\lambda^2} [b, \RR]   \left(b (B_2 (a)) e^{i\lambda k\cdot x}\right) \, .
\end{align}
In the first two summands appear only derivatives of $b$, but there are no zero order terms in $b$. We can then estimate the two terms in the second line applying Proposition \ref{p:stat_phase}, with $m=N-1$ to the first summand and with $m=N-2$ to the second summand. Applying in addition interpolation identities, we conclude
\begin{align}
& \|[b, \mathcal{R}] (F)\|_\alpha \leq C\frac{\|a\|_0 \|b\|_1}{\lambda^{2-\alpha}} + C \frac{\|a\|_2 \|b\|_N +\|a\|_{N} \|b\|_2}{\lambda^{N-\alpha}}\nonumber\\
&\quad  + C \frac{\|a\|_{2+\alpha} \|b\|_{N+\alpha} +\|a\|_{N+\alpha} \|b\|_{2+\alpha}}{\lambda^N}\nonumber\\
&\quad + \frac{1}{\lambda} \underbrace{\left\| [b, \RR]  \left(b (B_1 (a)) e^{i\lambda k\cdot x}\right)\right\|_\alpha}_{II}
+ \frac{1}{\lambda^2}  \left\| [b, \RR]  \left(b (B_2 (a)) e^{i\lambda k\cdot x}\right)\right\|_\alpha.\label{e:first_expansion}
\end{align}
(Indeed the above estimate is sub-optimal as we get, for instance, terms of type $\|a\|_0 \|b\|_N + \|a\|_1 \|b\|_{N-1}+ \|a\|_2 \|b\|_{N-2}$ instead of $\|a\|_2 \|b\|_N$; however, this crude estimate is still sufficient for our purposes.)

\medskip

{\bf Step 3} We can now apply the same idea to the term $II$ in \eqref{e:first_expansion} to reach the estimate
\begin{align}
& \|[b, \RR] (F)\|_\alpha \leq C\frac{\|a\|_0 \|b\|_1}{\lambda^{2-\alpha}} + C \frac{\|a\|_2 \|b\|_N +\|a\|_{N} \|b\|_2}{\lambda^{N-\alpha}}\nonumber\\
&\quad  + C \frac{\|a\|_{2+\alpha} \|b\|_{N+\alpha} +\|a\|_{N+\alpha} \|b\|_{2+\alpha}}{\lambda^N} + 
\frac{1}{\lambda^2} \left\| [b, \RR] \left(b (B'_2 (a)) e^{i\lambda k\cdot x}\right)\right\|_\alpha\nonumber\, .
\end{align}
where $B'_2 = B_2 + B_1\circ B_1$ is second order and this time we have applied Proposition \ref{p:stat_phase} with $m=N-2$ and $m=N-3$ to handle the corresponding two terms of order $\lambda^{-2}$ and $\lambda^{-3}$ arising from $II$. Proceeding now inductively, we end up with
\begin{align}
& \|\RR (bF)- b \RR (F)\|_0 \leq \frac{\|a\|_0 \|b\|_1}{\lambda^2} + C \frac{\|a\|_2 \|b\|_N +\|a\|_{N} \|b\|_2}{\lambda^{N-\alpha}}\nonumber\\
&\quad  + C \frac{\|a\|_{2+\alpha} \|b\|_{N+\alpha} +\|a\|_{N+\alpha} \|b\|_{2+\alpha}}{\lambda^N}\nonumber\\
&\quad+ \frac{1}{\lambda^N}  \left\| \mathcal{R}  \left(b (B'_N (a)) e^{i\lambda k\cdot x}\right) -
b \mathcal{R}  \left(B'_N (a) e^{i\lambda k\cdot x}\right)\right\|_\alpha\label{e:N_expansion}\, .
\end{align}
Finally, applying Proposition \ref{p:GT} to the final term we reach the desired estimate.
\end{proof}

\bibliographystyle{acm}
\bibliography{eulerbib}

\end{document}